\documentclass[11pt, reqno]{article}

%Font encoding
\usepackage[utf8]{inputenc}
\usepackage[T1]{fontenc}

%Page settings
\usepackage{setspace}
\linespread{1.2}
\usepackage[margin=1.25in]{geometry}
\usepackage{parskip}
\setlength{\parindent}{10pt}

% Fonts settings
\usepackage[sc]{mathpazo}
\usepackage[T1]{eulervm}
\usepackage[scaled]{helvet}

%Math packages
\usepackage{amsmath,amsthm, amssymb, amsfonts}
\usepackage{mathtools}
\usepackage{graphicx}
\usepackage[hidelinks]{hyperref}
\usepackage{xcolor}

%Arxiv linking functionality
\newcommand{\arxiv}[1]{\href{https://arxiv.org/abs/#1}{\texttt{ArXiv:#1}}}
\newcommand{\arxivmath}[1]{\href{https://arxiv.org/abs/math/#1}{\texttt{ArXiv:#1}}}
\newcommand{\arxivmaph}[1]{\href{https://arxiv.org/abs/math-ph/#1}{\texttt{ArXiv:#1}}}
\newcommand{\arxivcnma}[1]{\href{https://arxiv.org/abs/cond-mat/#1}{\texttt{ArXiv:#1}}}

%Environments
\theoremstyle{plain}
\newtheorem{thm}{Theorem}
\newtheorem{cor}{Corollary}[section]
\newtheorem{lem}{Lemma}[section]
\newtheorem{prop}{Proposition}[section]

\theoremstyle{definition}

\numberwithin{equation}{section}

%Macros

\newcommand{\pr}[1]{\mathbf{Pr}\left[#1\right]}
\newcommand{\ind}[1]{\mathbf{1}_{\{ #1 \}}}
\newcommand{\dt}[1]{\mathrm{det}\left(#1\right)}
\newcommand{\intz}[1]{\frac{1}{2 \pi \mathbold{i}}\oint \limits_{|z|= #1}}

\newcommand{\D}{\Delta}
\newcommand{\dr}{\nabla}
\newcommand{\vt}{\, | \,}

\newcommand{\eps}{\varepsilon}
\newcommand{\Z}{\mathbb{Z}}
\newcommand{\R}{\mathbb{R}}
\newcommand{\C}{\mathbb{C}}
\newcommand{\W}{\mathbb{W}}
\newcommand{\Ai}{\mathrm{Ai}}

\newcommand{\G}{\mathcal{G}}
\newcommand{\Gb}{\mathbold{G}}
\newcommand{\J}{\mathcal{J}}
\newcommand{\K}{\mathcal{K}}

%Title
\title{On inhomogeneous polynuclear growth}

\author{Kurt Johansson\thanks{\textsc{Department of Mathematics, KTH Royal Institute of Technology}. \textit{Email}: \texttt{kurtj@kth.se}}
\and
Mustazee Rahman\thanks{\textsc{Department of Mathematical Sciences, Durham University}. \textit{Email}: \texttt{mustazee@gmail.com}}}

\date{}

\begin{document}
\maketitle

\setcounter{tocdepth}{2}
\tableofcontents

\begin{abstract}
\noindent	This article studies the inhomogeneous geometric polynuclear growth model, the
	distribution of which is related to Schur functions. We explain a method to derive its distribution functions
	in both space-like and time-like directions, focusing on the two-time distribution. Asymptotics of the two-time
	distribution in the KPZ-scaling limit is then considered, extending to two times several single-time
	distributions in the KPZ universality class.
\end{abstract}
\medskip
{\footnotesize
\emph{Keywords}: KPZ universality, last passage percolation, polynuclear growth, two-time distribution \\
\emph{AMS 2020 subject classification}: Primary 60F05, 60K35, 82C23, 82C24; Secondary 05E10, 15A15, 30E20.
}
\newpage

\section{Introduction} \label{sec:1}
This article looks at the inhomogeneous polynuclear growth model, known also as geometric directed last passage percolation.
It is defined in terms of parameters $ a_i, b_j \in [0,1]$ with $0 < a_i b_j < 1$ for every $i,j \geq 1$. Suppose
$\omega_{i,j} \sim \mathrm{Geom}(a_i b_j)$ are geometric random variables of rate $a_i b_j$, that is,
$$ \pr{\omega_{i,j} = k} = (1- a_ib_j) (a_i b_j)^k \quad k = 0,1,2,\ldots.$$
The growth function associated to this random environment is
\begin{equation} \label{eqn:G} \Gb(m,n) = \max \, \{ \Gb(m-1,n) , \Gb(m,n-1)\} + \omega(m,n).\end{equation}
The boundary conditions are $\Gb(m,0) \equiv 0$ and $\Gb(0,n) = x_n$ for $x_1 \leq x_2 \leq x_3 \leq \cdots$.

Function $\Gb$ models up/right growth in a quadrant. Rotating the quadrant by 45 degrees,
$\Gb$ can be thought of as a height interface $H(x,t)$ that grows with time. Specifically, if one defines
\begin{equation} \label{eqn:pngheight}
H(x,t) = \Gb \left(\frac{t+x+1}{2},\frac{t-x+1}{2} \right )
\end{equation}
for odd $x+t$ with $|x| < t$, and extends $H$ to $x \in \R$ by linear interpolation, then $H(x,t)$ represents
the height above $x$ of an interface at time $t$. It is in this setting that the term polynuclear growth model is used; see \cite{KS, PS}.
This and closely related inhomogeneous growth models have been looked at in \cite{BBP, BR, BDR, GTW, IS, JoDPG}.

Definition \eqref{eqn:G} also leads to the expression
$$\Gb(m,n) = \max_{\pi} \, \sum_{(i,j) \in \pi} \omega(i,j)$$
where the maximum is over all up/right lattice paths $\pi$ from $(1,1)$ to $(m,n)$.
Up/right means that the paths move in the direction (0,1) or (1,0) at each step.
In this way $\Gb(m,n)$ represents the last passage time among directed paths from $(1,1)$ to $(m,n)$
with respect to the weights $\omega(i,j)$. The behaviour of $\Gb$ in this setting, with inhomogeneity,
especially its macroscopic shape, has been recently studied in \cite{Em, EJS}.

From a different viewpoint $\Gb$ defines a totally asymmetric exclusion process where particle $j$ makes
its $i$-th jump at geometric rate $a_ib_j$. In yet another it defines a measure on Young diagrams in terms of Schur functions.
We do not to explore these here directly, although to do so would be interesting. See the papers \cite{DW, KPS, RS}
for recent works on such particle systems with inhomogeneous hopping rates, and \cite{As, BP, Josurvey, Ok}
for some examples of the relation to Young diagrams.

Our interest lies in various aspects of the inhomogeneous growth model, such as its transition probability
as a Markov chain, its distribution function along certain space-like paths, and its two-time distribution in both
the discrete and asymptotic KPZ-scaling limit. We show how to compute all of these in terms of determinants.
These results are expanded on in the coming sections. The KPZ-scaling limit leads to two-time distributions that
extend previously known single-time distributions in the KPZ universality class. We also provide formulas
for the inhomogeneous exponential last passage percolation model as a limiting case of the geometric one.

This article grew from our earlier work \cite{JR} in an effort to extend the results from the homogeneous to the inhomogeneous setting.
The inhomogeneity makes the model more challenging, more general and perhaps more applicable. It turns out that with
modifications, the basic ideas in \cite{JR} can be extended and many aspects of the inhomogeneous model can be understood.
The remainder of the introduction describes these results.

\paragraph{\textbf{Markovian transition probability.}}

Consider the vector-valued process
\begin{equation} \label{eqn:Gvec} \vec{\Gb}(m) = (\Gb(m,1), \Gb(m,2), \ldots, \Gb(m,N)),\end{equation}
which, for a given $N$, is an inhomogeneous Markov chain taking values in
$$\W_N = \{ z \in \Z^N: z_1 \leq z_2 \leq \cdots \leq z_N\}.$$
Its transition probabilities are given by a determinant.

\begin{thm} \label{thm:1}
	For $x, y \in \W_N$,
	$$ \pr{\vec{\Gb}(m) = y \vt \vec{\Gb}(0) = x} = \dt{M(i,j \vt x,y)}_{i,j},$$
	where
	\begin{equation*}
	M(i,j \vt x,y) = \frac{1}{2\pi \mathbold{i}} \oint_{|z| = R} \frac{dz}{z}\,
	\frac{(zb_j)^{y_j}}{(zb_i)^{x_i}} \frac{\prod_{k=1}^j (z - 1/b_k)}{\prod_{k=1}^i (z-1/b_k)} \prod_{k=1}^m \frac{1-a_kb_j}{1- a_k/z}.
	\end{equation*}
	The radius $R > \max \{1/b_1, \ldots, 1/b_N \}$ and the circular contour is oriented counter-clockwise.
\end{thm}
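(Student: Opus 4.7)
The plan is to realize $\vec{\Gb}$ as the projection of an interlacing Markov chain on triangular arrays of integers (in the spirit of Dieker--Warren), derive a Karlin--McGregor-type determinantal transition kernel from this, and then compute each entry of the kernel as a single contour integral.

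I would start from the observation that the recursion \eqref{eqn:G} is precisely RSK row insertion applied to the columns $(\omega_{k,\cdot})_{k \leq m}$, so $\vec{\Gb}(m)$ is the top row of a Gelfand--Tsetlin-type interlacing height function whose full dynamics are Markov. A Markov intertwining argument, adapted to the inhomogeneous geometric weights, then shows that the projection onto the top row is itself Markov with transition kernel of the determinantal form $\det[K_m(i,j \vt x,y)]_{i,j}$ for an explicit single-walker kernel $K_m$. This is the main technical obstacle: in the homogeneous case the reduction is classical (Warren, Borodin--Ferrari), but with distinct $a_k$ and $b_j$ one must verify that the intertwining is preserved at every step and identify the correct conjugating function in terms of the $b_j$'s.

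Once the determinantal form is secured, the explicit identification $K_m = M$ is a generating-function calculation. The factor $(zb_j)^{y_j}/(zb_i)^{x_i}$ tracks the endpoints of a walker whose ``color'' at time $0$ is $i$ and at time $m$ is $j$; the rational factor $\prod_{k=1}^j(z-1/b_k)/\prod_{k=1}^i(z-1/b_k)$ encodes the shift between these colors; and the product $\prod_{k=1}^m(1-a_k b_j)/(1-a_k/z)$ is the generating function of the $m$-step geometric walk with column-dependent rates $a_k b_j$, the numerator being the normalization of $\mathrm{Geom}(a_k b_j)$ and the denominator its moment generating function at $1/z$. Summing these pieces under a single contour integral (legal for $|z|=R$ beyond all poles) gives exactly $M(i,j\vt x,y)$.

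As a consistency check I would verify the base case $m=0$, where the empty product trivializes the last factor. A residue/biorthogonality computation pairing $(zb_j)^{y_j}\prod_{k=1}^j(z-1/b_k)$ against $(zb_i)^{-x_i}\prod_{k=1}^i(z-1/b_k)^{-1}$, using that the poles of the integrand lie only at $z=1/b_k$ for $k \leq \min(i,j)$, collapses $\det[M(i,j\vt x,y)]_{i,j}$ to $\ind{x=y}$, as required for the identity transition at time zero. With this verified and the intertwining established, the theorem follows.
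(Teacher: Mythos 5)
Your approach differs from the paper's. The paper takes the \emph{one-step} transition kernel from Dieker--Warren~\cite{DW} as its starting point, rewrites it as a contour integral, and then proves the $m$-step formula by induction: assuming the $(m-1)$-step kernel has the claimed form, it convolves with the one-step kernel using a summation-by-parts lemma (Lemma~\ref{lem:sbp1}) followed by Cauchy--Binet, contracting the resulting double contour integral to a single one. You instead propose to obtain the $m$-step kernel in one shot from an RSK/intertwining argument.

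The problem is that the intertwining machinery in~\cite{DW} is genuinely a \emph{one-step} statement: it intertwines a single RSK insertion on the interlacing array with a single step of the top-row Markov chain, and that is all it gives. To pass to $m$ steps you still have to convolve $m$ one-step kernels (each with a different inhomogeneous rate vector $(a_k b_j)_j$, since $a_k$ varies with the step). Your sketch elides this entirely --- the phrase ``A Markov intertwining argument \ldots shows that the projection onto the top row is itself Markov with transition kernel of the determinantal form $\det[K_m(\cdot)]$'' for the \emph{full} $m$-step chain is not something the intertwining argument delivers directly, and with a general nonzero initial condition $x$ the RSK picture does not reduce to a Schur-measure marginal either. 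So after the intertwining step you would be back at precisely the convolution the paper has to do, and the technical content (summation by parts over $\W_N$, Cauchy--Binet, contour contraction) is missing from your outline. The follow-up ``generating-function calculation'' identifying $K_m = M$ is plausible as a heuristic reading of each factor, but the $i,j$-dependent rational prefactor $\prod_{k=1}^j(z-1/b_k)/\prod_{k=1}^i(z-1/b_k)$ does not come from a single-walker moment generating function; it is produced by the structure of the convolution, and no argument is offered for it.

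There is also a concrete error in your consistency check at $m=0$: you say the integrand has poles ``only at $z=1/b_k$ for $k \le \min(i,j)$,'' but the rational factor $\prod_{k=1}^j(z-1/b_k)/\prod_{k=1}^i(z-1/b_k)$ has \emph{no} poles when $j\ge i$ (it is a polynomial), and poles at $z=1/b_k$ for $j < k \le i$ when $j<i$; in addition there can be a pole at $z=0$ from the $z^{y_j-x_i-1}$ factor. The biorthogonality you appeal to would have to be run against this actual pole structure, which is the opposite of what you wrote. In short, the proposal correctly identifies the relevant circle of ideas (RSK, intertwining, determinantal kernels) but leaves the two load-bearing steps --- the convolution of one-step kernels over the Weyl chamber, and the explicit identification of the contour-integral kernel --- unaddressed.
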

The transition probabilities for the model \eqref{eqn:G} with exponential passage times $\omega_{i,j}$ are obtained in $\S$\ref{sec:5}.
Theorem \ref{thm:1} generalizes results from \cite{DW, RS} about transition probabilities of certain particle systems with
inhomogeneous jump rates.

\paragraph{\textbf{Multi-spatial distribution at a single time.}}

Suppose $n_1 < n_2 < \cdots < n_p = N$ and consider the distribution function
\begin{equation} \label{eqn:singletime}
\pr{\Gb(m,n_1) < h_1, \Gb(m,n_2) < h_2, \ldots, \Gb(m,n_p) < h_p \vt \vec{\Gb}(0)=x}\,.
\end{equation}
This can be thought of as the distribution function of $\Gb$ along a space-like path as explained below; see also \cite{BFS}.
For it one has the following formula.

\begin{thm} \label{thm:2}
For $x \in \W_N$ and $0 = n_0 < n_1 < n_2 < \cdots < n_p = N$, set $h(j) = h_k$ for every $j \in (n_{k-1}, n_k]$. Then,
$$\pr{\Gb(m,n_1) < h_1, \Gb(m,n_2) < h_2, \ldots, \Gb(m,n_p) < h_p \vt \vec{\Gb}(0) = x} = \dt{F(i,j \vt x)}_{i,j},$$
where $F$ is the $N \times N$ matrix
\begin{equation*}
F(i,j \vt x) = \frac{1}{2\pi \mathbold{i}} \oint_{|z| = R} dz\, \frac{(zb_j)^{h(j)-1}}{(zb_i)^{x_i}}
\frac{\prod_{k=1}^{j-1} (z - 1/b_k)}{\prod_{k=1}^i (z-1/b_k)}  \prod_{k=1}^m \frac{1-a_kb_j}{1-a_k/z}.
\end{equation*}
The radius $R > \max \{1/b_1, \ldots, 1/b_N \}$.
\end{thm}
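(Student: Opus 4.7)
The plan is to reduce Theorem~\ref{thm:2} to Theorem~\ref{thm:1} by summing the joint distribution of $\vec{\Gb}(m)$ over the relevant endpoints. Because $\vec{\Gb}(m)$ takes values in $\W_N$, the multi-level event $\{\Gb(m, n_k) < h_k : k = 1,\ldots, p\}$ is equivalent to the componentwise event $\{\Gb(m, j) < h(j) : j = 1, \ldots, N\}$: for $j \in (n_{k-1}, n_k]$ monotonicity gives $\Gb(m, j) \leq \Gb(m, n_k) < h_k = h(j)$, while the converse is automatic at $j = n_k$. Applying Theorem~\ref{thm:1} then identifies the left-hand side of Theorem~\ref{thm:2} with
$$\sum_{\substack{y \in \W_N \\ y_j < h(j) \,\forall j}} \det\bigl[M(i, j \vt x, y)\bigr]_{i,j=1}^N.$$

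\textbf{Summation identity.} The main step is the identity
$$\sum_{\substack{y \in \W_N \\ y_j < h(j)}} \det\bigl[M(i, j \vt x, y)\bigr]_{i,j} \;=\; \det\bigl[F(i, j \vt x)\bigr]_{i,j}.$$
Inside $M(i,j \vt x, y)$ only the factor $(zb_j)^{y_j}/z$ depends on $y_j$, and since the contour has $R > 1/b_j$ one has
$$\sum_{y = -\infty}^{h(j)-1} \frac{(zb_j)^{y}}{z} \;=\; \frac{(zb_j)^{h(j)-1}}{z - 1/b_j}.$$
The factor $z - 1/b_j$ cancels the $k = j$ term of $\prod_{k=1}^j(z - 1/b_k)$, converting it into $\prod_{k=1}^{j-1}(z - 1/b_k)$, which gives the column-wise relation $\sum_{y = -\infty}^{h(j)-1} M(i, j \vt x, y) = F(i, j \vt x)$. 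By multilinearity, summing the determinant over each $y_j \in (-\infty, h(j))$ independently produces $\det[F(i,j\vt x)]$, and the summation identity reduces to showing that contributions from $y \in \Z^N \setminus \W_N$ (with each $y_j < h(j)$) cancel when substituted into $\det[M]$.

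\textbf{Main obstacle.} The cancellation of the out-of-order contributions is the heart of the proof. A naive antisymmetrisation fails because $M(i, j \vt x, y)$ depends on $y$ only through its $j$-th column, so swapping two entries $y_a, y_b$ does not simply exchange columns of the matrix. The plan is to introduce a shift $\tilde y_i = y_i + i - 1$, which turns $\W_N$ into strictly-increasing configurations and recasts $\det[M]$ as a bona fide Lindström--Gessel--Viennot determinant for an effective ensemble of $N$ nonintersecting walkers; for such a determinant standard fermionic antisymmetrisation eliminates the out-of-order terms, leaving only ordered configurations. Verifying that the contour-integral formula for $M$ admits this shifted LGV interpretation---and that the shift interacts cleanly with the boundary data $x$ and the staircase bound $h(j)$---is the step that requires the most care.
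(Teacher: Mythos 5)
The first two steps are sound: the rewrite of the event $\{\Gb(m,n_k)<h_k\}$ as $\{\Gb(m,j)<h(j)\ \forall j\}$ via monotonicity of $y\in\W_N$ is correct, as is the single-column computation $\sum_{y<h(j)} (zb_j)^y/z = (zb_j)^{h(j)-1}/(z-1/b_j)$ and the resulting cancellation of the $k=j$ factor. However, the step you yourself flag as "the heart of the proof" — that summing $\det[M(i,j\vt x,y)]$ over $y\in\Z^N\setminus\W_N$ (with $y_j<h(j)$) gives zero — does not hold, and the LGV route you propose cannot repair it. An LGV/fermionic cancellation requires the matrix entry to depend on $(i,j)$ only through the source and sink positions, i.e.\ to be of the form $\phi(x_i,y_j)$ for a fixed two-variable kernel, so that permuting the $y$-variables permutes columns and the determinant is alternating. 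Here $M(i,j\vt x,y)$ carries explicit dependence on the indices $i$ and $j$ beyond $x_i$ and $y_j$: the factors $(zb_j)^{y_j}$, $\prod_{k\le j}(z-1/b_k)$, $\prod_{k\le i}(z-1/b_k)^{-1}$ and $\prod_k(1-a_kb_j)$ all tie the label to the inhomogeneity parameters. No shift $\tilde y_j = y_j + j - 1$ removes this; the resulting determinant is not antisymmetric in the $y$-variables (indeed two columns with $y_j=y_{j'}$ are unequal), so out-of-order configurations do not pair off and cancel.

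The paper's proof sidesteps the extension to $\Z^N$ entirely. It sums the variables one at a time over the genuinely nested ranges $y_{j-1}\le y_j < h(j)$, starting from $y_N$. The finite geometric sum
$$\sum_{y_{j-1}\le y_j<h(j)} (zb_j)^{y_j} = \frac{(zb_j)^{h(j)}-(zb_j)^{y_{j-1}}}{b_j\,(z-1/b_j)}$$
splits the $j$-th column into two pieces: the $(zb_j)^{h(j)}$ piece becomes column $j$ of $F$, while the $(zb_j)^{y_{j-1}}$ piece is a scalar multiple of column $j-1$ of $M$ (the scalar $b_j^{y_{j-1}-1}b_{j-1}^{-y_{j-1}}\prod_k\frac{1-a_kb_j}{1-a_kb_{j-1}}$ absorbs the index mismatch) and hence vanishes inside the determinant. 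This telescoping is exact at every stage and never requires any cancellation over unordered configurations. If you want to complete your argument along your own lines, you would need to prove the unordered-sum vanishing directly for this specific $b$-dependent kernel; I do not believe that identity is true, and I would instead recommend the column-by-column telescoping, which is both shorter and robust to the inhomogeneity.
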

In $\S$\ref{sec:fredholm} we explain how this determinant can be expressed as a Fredholm determinant,
which is often better for extracting asymptotics. Proposition \ref{prop:spatialFredholm} presents the Fredholm
determinantal formula when $\vec{\Gb}(0) = 0$. In $\S$\ref{sec:5} a similar formula is presented for the exponential model.

The distribution function \eqref{eqn:singletime} may be used to study the asymptotic single time, multi-spatial
distribution function of the height interface $H(x,t)$ from \eqref{eqn:pngheight} under the KPZ scaling limit.
Indeed, when $m = T$ and $n_k = T - 2x_k T^{2/3} -1$, \eqref{eqn:singletime} provides the joint distribution
of $x_k \mapsto H(T - x_kT^{2/3}, x_kT^{2/3}+1)$. For large values of $T$, the slow de-correlation phenomenon (see \cite{CFPb})
implies that $H(T - x_kT^{2/3}, x_kT^{2/3}+1) - H(T, x_kT^{2/3})$ is of order $o_p(T^{1/3})$ for any
finite number of $x_k$s. So the asymptotic finite dimensional distributions of the function $x \mapsto H(T, xT^{2/3})/ T^{1/3}$,
which is its KPZ-scaling limit (see \cite{KPZ}), can be obtained from the distribution function \eqref{eqn:singletime}.
We will, however, not discuss how to use Theorem \ref{thm:2} to analyze these asymptotics in an inhomogeneous model.

\paragraph{\textbf{The two-time distribution.}}

Of much interest is the two-time distribution of $\Gb$:
\begin{equation} \label{eqn:2timedistribution}
\pr{ \Gb(m,n) < h,  \Gb(M,N) < H \vt \vec{\Gb}(0)=x}
\end{equation}
for $m < M$ and $n < N$. In terms of the height interface \eqref{eqn:pngheight} this is the joint distribution
of the interface at two different times, namely at $t_1 = m+n-1$ and $t_2 = M+N-1$.

\begin{thm} \label{thm:twotime}
	The two-time distribution function \eqref{eqn:2timedistribution} equals
	$$ \pr{\Gb(m,n) < h, \Gb(M,N) < H \vt \vec{\Gb}(0)=x} =
	\frac{1}{2 \pi \mathbold{i}} \oint \limits_{|\theta| = r} d \theta\, \frac{\dt{\theta^{\ind{i > n}}L_1 - \theta^{-\ind{i \leq n}}L_2}}{\theta -1}$$
	where the radius $r > 1$ and $L_1, L_2$ are the following $N \times N$ matrices.
	\begin{align*}
	L_1(i,j) &= \frac{1}{(2 \pi \mathbold{i})^2} \oint \limits_{|z|=R_1} dz \oint \limits_{|w|=R_2} dw\,
	\frac{(b_iz)^{h-1} (b_jw)^{H-h}}{(b_iz)^{x_i} (z-w)} \frac{\prod_{k=1}^n(z-1/b_k)}{\prod_{k=1}^i(z-1/b_k)} \frac{\prod_{k=1}^{j-1}(w-1/b_k)}{\prod_{k=1}^n(w-1/b_k)} \\
	& \times \prod_{k=1}^m \frac{1-a_kb_i}{1-a_k/z} \prod_{k= m+1}^M \frac{1-a_kb_j}{1-a_k/w}.
	\end{align*}
	The contours are arranged so that $R_1 > R_2 > \max_k \{1/b_k\}$.
	The matrix $L_2$ looks the same except that the ordering of the contours is reversed to $R_2 > R_1 > \max \{1/b_k\}$.
\end{thm}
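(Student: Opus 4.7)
The plan is to apply the Markov property of $\vec\Gb$ at the intermediate time $m$ and then combine Theorems \ref{thm:1} and \ref{thm:2} through an Andreief-type identity, encoding the constraint $y_n<h$ by an auxiliary contour in $\theta$. Conditioning on $\vec\Gb(m)=y$,
$$\pr{\Gb(m,n)<h,\,\Gb(M,N)<H \vt \vec\Gb(0)=x} = \sum_{y\in\W_N,\,y_n<h} \pr{\vec\Gb(m)=y \vt \vec\Gb(0)=x}\, \pr{\Gb(M,N)<H \vt \vec\Gb(m)=y}.$$
Theorem \ref{thm:1} gives the first factor as $\dt{M(i,j\vt x,y)}$. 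For the second factor I would apply Theorem \ref{thm:2} with the single spatial point $n_1=N$ at threshold $h_1=H$, initial data $y$, and passage parameters $a_{m+1},\ldots,a_M$, producing a second $N\times N$ determinant $\dt{F(i,j\vt y)}$.

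Because $\dt{M}\dt{F}$ is symmetric in the entries of $y$, the sum over $\W_N$ may be replaced by $(N!)^{-1}$ times a sum over $\Z^N$. In this unordered picture the constraint $y_n<h$ becomes $\#\{i:y_i<h\}\ge n$, which I would encode by
$$\ind{\#\{i:y_i<h\}\ge n} = \frac{\theta^{-n}}{2\pi\mathbold{i}}\oint_{|\theta|=r}\frac{\prod_{i=1}^N \theta^{\ind{y_i<h}}}{\theta-1}\,d\theta, \qquad r>1,$$
so that the weight factorises across the $y_i$'s. Expanding $\det M$ by columns and $\det F$ by rows, each $y_j$ is shared between the $j$-th column of $M$ (through $(zb_j)^{y_j}$) and the $j$-th row of $F$ (through $(wb_j)^{-y_j}$); $b_j$ cancels and the sum over $y_j\in\Z$ becomes $\sum_{y_j}\theta^{\ind{y_j<h}}(z/w)^{y_j}$, a geometric series weighted by $\theta$ on $y_j<h$.

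This series splits into two convergent pieces of opposite sign: $\theta\sum_{y_j<h}(z/w)^{y_j}$ converges for $|z|>|w|$ (the $R_1>R_2$ arrangement of $L_1$), while $\sum_{y_j\ge h}(z/w)^{y_j}$ converges for $|z|<|w|$ (the $R_2>R_1$ arrangement of $L_2$), the opposite sign arising from $1/(z-w)=-1/(w-z)$. Assembling the double permutation sum as a single determinant then produces $\dt{\theta L_1 - L_2}$, and rewriting the prefactor $\theta^{-n}$ as $\prod_{i\le n}\theta^{-1}$ distributes it across the rows with $i\le n$, giving
$$\theta^{-n}\dt{\theta L_1 - L_2} = \dt{\theta^{\ind{i>n}}L_1-\theta^{-\ind{i\le n}}L_2},$$
which integrated against $d\theta/(\theta-1)$ along $|\theta|=r$ yields the stated formula.

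The principal technical difficulty is the middle-index bookkeeping. Inside the product $M\cdot F$ the factors $\prod_{k=1}^j(z-1/b_k)$, $\prod_{k=1}^j(w-1/b_k)^{-1}$ and $\prod_{k=1}^m(1-a_k b_j)$ all carry the intermediate column index $j$, whereas $L_1$ and $L_2$ carry the $n$-truncated products $\prod_{k=1}^n(z-1/b_k)$ and $\prod_{k=1}^n(w-1/b_k)^{-1}$ with the remaining $b$- and $a$-factors distributed between the row and column indices of $L_{1,2}$. Showing that after the permutation sum these $j$-indexed factors rearrange into the stated $n$-truncated form---mirroring the analogous step in \cite{JR} but now adapted to the $b_k$-inhomogeneity and to the split of the $a_k$'s between the two time intervals---is the main computational work.
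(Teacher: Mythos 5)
Your high-level skeleton matches the paper's: condition on $\vec\Gb(m)=y$, encode the constraint $y_n<h$ with the $\theta$-contour, observe that the geometric sum over $y_j$ splits into two pieces that converge for opposite orderings of $|z|$ and $|w|$, and assemble $\theta L_1-L_2$. Using Theorem~\ref{thm:2} for the second leg is also fine; it is equivalent to the paper's Lemma~\ref{lem:C}.

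The genuine gap is the step you flag as ``the principal technical difficulty,'' and it is more than bookkeeping -- the surrounding symmetrization argument is also not valid as stated. The product $\dt{M(i,j\vt x,y)}\dt{F(i,j\vt y)}$ is \emph{not} symmetric in the entries of $y$: when you replace $y$ by $y_\sigma$, the factor $(zb_j)^{y_{\sigma(j)}}$ in column $j$ of $M$ moves, but the other $j$-dependent pieces -- most importantly $\prod_{k=1}^j(z-1/b_k)$ in $M$ and $\prod_{k=1}^{j-1}(w-1/b_k)$ in $F$, together with the $\prod_{k=1}^i$ denominators -- stay attached to the column/row index, so this is not a column/row permutation of the matrices. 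Consequently the replacement of $\sum_{y\in\W_N}$ by $(N!)^{-1}\sum_{y\in\Z^N}$ is unjustified, and without it the Andreief/Cauchy--Binet identity you want does not kick in. The paper resolves exactly this obstruction with Lemma~\ref{lem:B}: it rewrites the two determinants using the operators $\nabla(b_k)$ (Lemma~\ref{lem:A}) and then moves the operators $\nabla(b_{n+1}),\dots,\nabla(b_j)$ from one determinant to the other by repeated summation by parts (the mechanism already set up in Lemma~\ref{lem:sbp1}), proving the boundary terms vanish because consecutive columns coincide or vanish. Only after this rearrangement -- so that column $j$ of the first determinant carries $\nabla(b_{(i,n]})w(y_j-x_i)$, whose $j$-dependence is solely through $y_j$ once the scalar factors $\prod_k(1-a_kb_j)$ are extracted, and similarly for the second -- does Cauchy--Binet apply and produce $L_1,L_2$ with the $n$-truncated products $\prod_{k=1}^n(z-1/b_k)$ and $\prod_{k=1}^n(w-1/b_k)^{-1}$. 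To make your proof complete you would need to supply an argument of this kind (or an equivalent one); as written, both the symmetrization and the index rearrangement are asserted rather than proved, and the former is actually false.
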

A formula for the model with exponential passage times is given in $\S$\ref{sec:5}.
In $\S$\ref{sec:6} we explain how the determinant above can be expressed as a Fredholm
determinant, and carry out the procedure to get a Fredholm determinant formula when
$\vec{\Gb}(0) = 0$ -- see Theorem \ref{thm:4}.

\paragraph{\textbf{Asymptotic considerations.}}
Theorem \ref{thm:4} leads us to investigate asymptotics of the two-time distribution in two cases.
The first is a perturbation of the homogeneous model where $a_1, a_2, \ldots, a_r$ are variable,
all other $a_i$s are set to be $q \in (0,1)$, and all the $b_j$s are 1. This model is studied in $\S$\ref{sec:7}.
The asymptotics are according to KPZ-scaling, \cite{KPZ}, whereby the parameters $n, N, m, M, h$ and $H$
are scaled as in \eqref{kpzscaling}. In order to get a meaningful limit the $a_k$s need to be scaled accordingly as well,
in the form (see \eqref{ascaling})
$$a_k = \sqrt{q} - c_{q} \lambda_k T^{-1/3}$$
for a $q$-dependent constant $c_{q}$, parameters $\lambda_k > 0$ and $T$ the large scaling parameter.

The asymptotics lead to a determinantal formula that is the two-time analogue of
the Baik-Ben\,Arous-P\'ech\'e distribution, which appears as the asymptotic largest eigenvalue
distribution of finite rank perturbations of complex Wishart matrices \cite{BBP}. Our formula is presented
in Theorem \ref{thm:5} and the result is stated in $\S$\ref{sec:7} (it needs introducing notation).
With more effort these methods should extend to formulas for the entire multi-time distribution of this model.
In the limit $\lambda_k \to +\infty$ one recovers the two-time distribution of the homogeneous model \cite{JoTwo}.
In the other limit $\lambda_k \to -\infty$ the distribution is known to have a Gaussian law \cite{CFPa}.
It is an intriguing question if this distribution also arises from a random matrix model.

The second case of asymptotics, studied in $\S$\ref{sec:8}, is the two-time distribution when
$a_1 = \sqrt{q}$, $a_i = q$ for $i > 1$ and every $b_j = 1$.
In other words, the weights $\omega(i,1)$ along the bottom row are distributed as $\mathrm{Geom}(\sqrt{q})$
while the rest are $\mathrm{Geom}(q)$. In this case a geodesic path $\pi$, that which attains value $\Gb(n,n)$,
spends an order of $n^{2/3}$ steps on the bottom row before venturing upwards to $(n,n)$. In terms of the
height interface $H(x,t)$ from \eqref{eqn:pngheight}, in the KPZ-scaling limit, this leads to a limiting interface
$\mathbold{H}(x,t)$ that starts at time 0 as a one-sided Brownian motion:
\begin{equation*} 
\mathbold{H}(x,0) = \begin{cases}
\sqrt{2} \, B(x) & \text{for}\; x \geq 0 \\
-\infty & \text{for}\; x < 0
\end{cases}.
\end{equation*}

Here $B(x)$ is standard Brownian motion. This is because under KPZ-scaling the contribution of the
weights $\omega(i,j)$ to $\Gb(n,n)$ along the bottom row, which is a random walk, scales to a Brownian motion.
The existence of the limit interface $\mathbold{H}(x,t)$ follows from results about the
KPZ fixed point in \cite{MQR}. See also \cite{BR, IS} for more on this model.
Theorem \ref{thm:6} gives the two-time distribution
\begin{equation} \label{eqn:Hstat}
\pr{ \mathbold{H}(x_1,t_1) < \xi_1, \mathbold{H}(x_2,t_2) < \xi_2}
\end{equation}
of this interface.

To conclude this introduction we remark that many aspects of the two-time distribution of $\Gb$ in the homogeneous setting,
and more generally its multi-time distribution, have been studied recently. Limit theorems have been established
in \cite{BL, DOV, NaDo, Liu, MQR, JoTwo, JR} and the two-time correlation function has been investigated in \cite{BaGa, NaDoTa, FO}.
See also the surveys \cite{BoGo, CoKPZ, QuKPZ} for general introduction to growth models in the KPZ universality class.
Related works can be found within these references as well.

\section{Computations for the inhomogeneous model} \label{partI}

\subsection{Markovian transition probability} \label{sec:2}
Theorem \ref{thm:1} will be proven by induction, computing the $m$-th step transition matrix as the convolution of the $(m-1)$-th step and 1-step
transition matrices. We begin with the 1-step transition matrix, which has been derived in terms of symmetric functions in \cite{DW}.

\subsubsection{The 1--step transition matrix}

Let $h_{\ell}(\alpha)$ be the $\ell$-th complete homogeneous symmetric polynomial  in variables $\alpha = (\alpha_1, \alpha_2, \ldots, \alpha_N)$.
Set $h_0 = 1$ and $h_{\ell} = 0$ if $\ell < 0$. Recall that
$$h_{\ell}(\alpha) = \sum_{\substack{(k_1,\ldots, k_N) \\ k_1 + \cdots k_N = \ell \\ k_i \geq 0}} \alpha_1^{k_1} \cdots \alpha_N^{k_N}.$$
Write $\alpha^{(i,j)} = (0, \ldots, 0, \alpha_{i+1}, \ldots, \alpha_j, 0,\ldots, 0)$ for $1 \leq i < j \leq N$, where the vector has $N$ components.
Set $h_{\ell}^{(i,j)}(\alpha) = h_{\ell}(\alpha^{(i,j)})$ and $h^{(i,i)}_{\ell}(\alpha) = \ind{\ell=0}$.

We use the same notation for the $\ell$-th elementary symmetric polynomials $e_{\ell}(\alpha)$:
$$ e_{\ell}(\alpha) = \sum_{\substack{ S \subset [N] \\ |S| = \ell}} \prod_{i \in S} \alpha_i .$$

Define, for $k \in \Z$, the functions
\begin{align*}
& w_{\alpha}^{(i,j)}(k) = \begin{cases}
\sum_{\ell=0}^{j-i} (-1)^{\ell} e_{\ell}^{(i,j)}(\alpha) \ind{k \geq \ell} & j \geq i \\
\sum_{\ell = 0}^{\infty} h_{\ell}^{(j,i)}(\alpha) \ind{k \geq \ell} & j < i
\end{cases}
\end{align*}

Now suppose $0 < p_j < 1$ for $1 \leq j \leq N$ and consider $\omega_j \sim \mathrm{Geom}(p_j)$.
For $x \in \W_N$, define $\vec{\Gb}(0) = x$ and $\vec{\Gb}(1) = (\Gb(1), \ldots, \Gb(N))$ according to
$$ \Gb(j) = \max \{ \Gb(j-1), x_j \} + \omega_j, \quad \Gb(0) =-\infty.$$
Set $1/p = (1/p_1, \ldots, 1/p_N)$. Then for $y \in \W_N$,
\begin{equation} \label{eqn:DW}
\pr{\vec{\Gb}(1) = y \vt \vec{\Gb}(0) = x} = \prod_{k=1}^N (1-p_k)p_k^{y_k-x_k}\, \dt{w^{(i,j)}_{1/p}(y_j+j - x_i-i)}_{i,j}.
\end{equation}
This is proved in \cite[Theorem 1]{DW} by using the RSK algorithm and certain intertwining between Markov kernels.
It was proved for the homogeneous model, where every $p_k=p$, in \cite{JoMar} by induction, and earlier
for a Brownian last passage model in \cite{Wa}.

Let us express \eqref{eqn:DW} in terms of contour integrals, which will be more suitable for our purposes. We have
\begin{align*}
\sum_{\ell \in \Z} e_{\ell}(\alpha) z^{\ell} & = \prod_{k=1}^N (1 + \alpha_k z) \quad z \in \C, \\
\sum_{\ell \in \Z} h_{\ell}(\alpha) z^{\ell} & = \prod_{k=1}^N (1-\alpha_k z)^{-1} \quad |z| < \min_k \{1/\alpha_k\}.
\end{align*}
By substituting in $\alpha_k = 0$ for $k \leq i$ and $k > j$, we infer that
$$ \sum_{\ell \in \Z} e_{\ell}^{(i,j)}(\alpha)z^{\ell} = \prod_{k=i+1}^j (1+\alpha_k z) \quad \text{and} \quad 
\sum_{\ell \in \Z} h_{\ell}^{(i,j)}(\alpha)z^{\ell} = \prod_{k=i+1}^j (1-\alpha_k z)^{-1}.$$
Observe also that
$$ \ind{k \geq \ell} = \frac{1}{2 \pi \mathbold{i}} \oint \limits_{|z| = r} dz\, \frac{1}{z^{k-\ell+1}(1-z)} \quad \text{for}\; r < 1.$$
The circular contour $\{|z| = r\}$ is oriented counter-clockwise.

Using these representations, we find that for $j \geq i$,
\begin{align*}
\sum_{\ell = 0}^{j-i} (-1)^{\ell} e_{\ell}^{(i,j)}(\alpha) \ind{k \geq \ell} & =
\intz{r} \frac{dz}{z(1-z)} \sum_{\ell=0}^{j-i} (-1)^{\ell} e_{\ell}^{(i,j)}(\alpha) z^{\ell-k} \\
& = \intz{r} \frac{dz}{z^{k+1}(1-z)} \sum_{\ell=0}^{j-i} e_{\ell}^{(i,j)}(\alpha)(-z)^{\ell}\\
& = \intz{r} dz\, \frac{\prod_{\ell=i+1}^j (1-\alpha_{\ell}z)}{z^{k+1}(1-z)} \quad (r < 1).
\end{align*}
Likewise, for $j \leq i$,
$$\sum_{\ell=0}^{\infty} h_{\ell}^{(j,i)}(\alpha)\ind{k \geq \ell} = \intz{r} dz\, \frac{\prod_{\ell={j+1}}^i \, (1-\alpha_{\ell}z)^{-1}}{z^{k+1}(1-z)},$$
where $r < \min_k \{1/\alpha_k\}$. It follows from these identities that
$$w_{1/p}^{(i,j)}(k) = \intz{r} dz\, \frac{\prod_{k=1}^j (1- z/p_k)}{\prod_{k=1}^{i}(1-z/p_k)} \cdot \frac{1}{z^{k+1}(1-z)},$$
provided that $r < \min\{p_1, \ldots, p_N\}$.

Therefore, \eqref{eqn:DW} now reads as
$$\pr{\vec{\Gb}(1) = y \vt \vec{\Gb}(0) = x} = \prod_{k=1}^N (1-p_k)p_k^{y_k-x_k}\,
\dt {\intz{r} \frac{dz}{z(1-z)} \frac{z^{x_i+i}}{z^{y_j+j}} \frac{\prod_{k=1}^j (1-z/p_k)}{\prod_{k=1}^i(1-z/p_k)}}.$$
Push the factors of $p_j^{y_j}$ and $(1-p_j)$ into the $j$-th column of the determinant, and $p_i^{-x_i}$ into the $i$-th row.
Then, expressing
$$ z^{-y_j-j} \prod_{k=1}^j (1-z/p_k) = z^{-y_j} \prod_{k=1}^j (z^{-1}- p_k^{-1}) \;\;\text{and}\;\;
z^{x_i+i} \prod_{k=1}^{i} (1-z/p_k)^{-1} = z^{x_i} \prod_{k=1}^{i}(z^{-1}-p_k^{-1})^{-1}$$
imply that
$$ \pr{\vec{\Gb}(1) = y \vt \vec{\Gb}(0) = x} = \dt{A(i,j)}$$
where
$$ A(i,j) = \intz{r} \frac{dz}{z} \frac{(z/p_i)^{x_i}}{(z/p_j)^{y_j}} \,\frac{\prod_{k=1}^j (z^{-1}-p_k^{-1})}{\prod_{k=1}^i (z^{-1}-p_k^{-1})} \,\frac{1-p_j}{1-z}.$$
Changing variables $z \mapsto z^{-1}$ gives
$$A(i,j) = \intz{R} \frac{dz}{z} \frac{(zp_j)^{y_j}}{(zp_i)^{x_i}} \, \frac{\prod_{k=1}^j (z-p_k^{-1})}{\prod_{k=1}^i (z-p_k^{-1})} \, \frac{1-p_j}{1-1/z},$$
with $R > \max \{1/p_1, \ldots, 1/p_N\}$.

The 1-step transition matrix of the inhomogeneous growth model is obtained by having $p_k = a_1 b_k$
and changing variables $z \mapsto z/a_1$ in the integral above.
(One encounters a conjugation factor of $a_1^{i-j}$ but this does not affect the determinant.)

\subsubsection{The $m$--step transition matrix}
Assuming the form of the $(m-1)$-step transition probability of $\Gb$ given by Theorem \ref{thm:1}, we convolve it with the 1-step transition
probability from the previous section to obtain the $m$-step transition matrix. The $(m-1)$-step transition matrix has the form
\begin{align*}
Q(x,y) & = \prod_{j=1}^N \prod_{k=1}^{m-1} (1-a_kb_j) \dt{B(i,j)} \\
B(i,j) & = \intz{R} \frac{dz}{z}\, \frac{(zb_j)^{y_j}}{(zb_i)^{x_i}} \frac{\prod_{k=1}^j(z- 1/b_k)}{\prod_{k=1}^i (z- 1/b_k)} a(z) \\
a(z) &= \prod_{k=1}^{m-1} \frac{1}{1- a_k/z}.
\end{align*}
The function $a(z)$ is bounded and analytic outside the disk  $\{|z| > \max_k a_k \}$,
and we have suppressed the dependence of $B$ on $x$ and $y$.

Write $p_k = a_m b_k$ and denote $P(x,y)$ the 1-step transition matrix associated to $p_1, \ldots, p_N$ from the previous section. Then,
$$\pr{\vec{\Gb}(m) = y \vt \vec{\Gb}(0) = x} = \sum_{u \in \W_N} Q(x,u) P(u,y).$$
The following proposition establishes Theorem \ref{thm:1}.
\begin{prop}\label{prop:1}
Fix $x, y \in \W_N$. Let $Q$ have the form above and $P$ be the 1-step transition matrix associated to $p_k = a_m b_k$.
Then,
$$\sum_{u \in \W_N} Q(x,u) P(u,y) = \prod_{j=1}^N \prod_{k=1}^m (1-a_kb_j) \dt{K(i,j)}$$
where
$$K(i,j) = \intz{R} \frac{dz}{z}\, \frac{(zb_j)^{y_j}}{(zb_i)^{x_i}} \frac{\prod_{k=1}^j(z- 1/b_k)}{\prod_{k=1}^i (z- 1/b_k)} \frac{a(z)}{1-a_m/z}.$$
\end{prop}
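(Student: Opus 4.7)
The plan is to establish the identity by combining Andreief's identity (the multivariate Cauchy--Binet) with a contour deformation. First I would write each matrix entry in the separable form
$$B(i,j)=\intz{R_1}\frac{a(z)\,dz}{z}\,\mathcal{A}_i(z)\,\mathcal{D}_j(z)\,(zb_j)^{u_j},\quad C'(i,j)=\intz{R_2}\frac{dw}{w(1-a_m/w)}\,\tilde{\mathcal{D}}_i(w)(wb_i)^{-u_i}\,\tilde{\mathcal{A}}_j(w),$$
where $\mathcal{A}_i(z)=(zb_i)^{-x_i}/\prod_{k=1}^i(z-1/b_k)$, $\mathcal{D}_j(z)=\prod_{k=1}^j(z-1/b_k)$, and $\tilde{\mathcal{A}}_j,\tilde{\mathcal{D}}_i$ are the corresponding $y$- and $u$-analogues. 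Applying Andreief's identity turns $\dt{B(i,j)}$ and $\dt{C'(i,j)}$ into $N$-fold contour integrals whose integrands factor into products of four sub-determinants; two of these are $u$-independent and two collect the entire $u$-dependence.

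Next I would expand the two $u$-dependent sub-determinants as signed sums over permutations $\sigma,\tau \in S_N$. The $\prod_j b_j^{u_j}$ factor arising from the first and the $\prod_i b_i^{-u_i}$ factor from the second cancel exactly, since both products run over the full index set $\{1,\ldots,N\}$, so the total $u$-dependence reduces to $\prod_j (z_{\sigma(j)}/w_{\tau(j)})^{u_j}$. Summing over $u\in\W_N$ via the telescoping substitution $v_1=u_1$ and $v_j=u_j-u_{j-1}\geq 0$ for $j\geq 2$, and using the contour ordering $R_2>R_1>\max_k 1/b_k$, factorizes the sum into absolutely convergent geometric series. After symmetrization over $\sigma$ and $\tau$, the Cauchy double-alternant identity collapses the resulting expression into $N!\,\dt{1/(w_\ell-z_k)}_{k,\ell}$, up to boundary factors from the $u_1$-sum that do not depend on the permutations.

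Finally I would apply Andreief's identity in reverse to reassemble the $2N$-fold integral as a single $N\times N$ determinant whose $(i,j)$ entry is a double contour integral carrying the Cauchy kernel $1/(w-z)$. Deforming the $w$-contour inward past $|w|=|z|$ in each entry picks up the residue at $w=z$, which merges $a(z)$ with $1/(1-a_m/w)$ into precisely the factor $a(z)/(1-a_m/z)$ appearing in $K(i,j)$. The leftover small-$|w|$ integral vanishes by analyticity of the integrand in the interior region, once one uses the effective support of the transition kernels implying $x\leq u\leq y$.

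The main obstacle is the sum over $\W_N$: the weak ordering forces the telescoping substitution and requires careful bookkeeping of absolute convergence, contour ordering, and the implicit support constraints that keep the sum finite. A secondary difficulty is verifying that the residual inner $w$-contour integral vanishes in the final step, which depends on the precise pole structure of the integrand after contour deformation.
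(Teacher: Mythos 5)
Your plan has a genuine gap at the Cauchy double-alternant step, which is precisely where the paper inserts its key Lemma~\ref{lem:sbp1} (the summation-by-parts lemma) and you do not.

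Here is the problem concretely. After applying Andreief to $\dt{B(i,j)}$ you obtain, inside the $N$-fold contour integral, the $u$-dependent sub-determinant $\dt{\mathcal{D}_j(z_k)(z_kb_j)^{u_j}}_{j,k}$ with $\mathcal{D}_j(z)=\prod_{k=1}^j(z-1/b_k)$. Expanding this over $\sigma \in S_N$ leaves, besides the $b$-factors you rightly cancel, the permutation-dependent factor $\prod_j \mathcal{D}_j(z_{\sigma(j)})$ in front of $\prod_j z_{\sigma(j)}^{u_j}$. These $\mathcal{D}_j(z_{\sigma(j)})$ factors (and the analogous $w$-side ones) do not factor as $(\text{function of }j)\times(\text{function of }\sigma(j))$, so they survive the $u$-summation attached to each $(\sigma,\tau)$ pair and do not reorganize into an alternant. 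Moreover, your telescoping substitution $v_k=u_k-u_{k-1}$ gives a $u$-sum of the form $\prod_{k\geq 2}\bigl(1-\prod_{j\geq k}z_{\sigma(j)}/w_{\tau(j)}\bigr)^{-1}$, whose factors involve ratios of \emph{products over tail sets} $\{k,\ldots,N\}$ rather than pairwise ratios $z_{\sigma(j)}/w_{\tau(j)}$. This is not the structure of $\dt{1/(w_\ell-z_k)}$; the Cauchy double-alternant expands as $\sum_\pi \operatorname{sgn}(\pi)\prod_j 1/(w_{\pi(j)}-z_j)$, a product of \emph{pairwise} kernel factors, and there is no resummation that turns the tail-product structure into pairwise structure in the presence of the leftover $\mathcal{D}_j$ polynomials. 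So the claimed collapse to $N!\,\dt{1/(w_\ell-z_k)}$ does not happen, and Andreief cannot be run in reverse as you propose.

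What the paper does instead is prove first (Lemma~\ref{lem:sbp1}) that one may replace $f_{i,j}$ by $f_{i,0}$ and $g_{j,i}$ by $g_{0,i}$ in the sum $\sum_{u\in\W_N}\dt{f_{i,j}(u_j)}\dt{g_{j,i}(-u_j)}$, by repeated discrete summation by parts. This is exactly the step that strips out the offending $j$-dependent polynomial factors $\prod_{k=1}^j(z-1/b_k)$ so that the column index $j$ enters only through $u_j$; the Cauchy--Binet identity then applies directly and reduces the $\W_N$-sum to a single $\Z$-sum inside one determinant entry. The convergence issue you flag (there is no lower bound on $u_1$ in $\W_N$) is resolved because $f_{i,0}(u)$ manifestly vanishes for $u<x_1$ at the level of the single contour integral, by contracting the contour to infinity; after Andreief the corresponding vanishing is only a property of the full $N$-fold integral, not of the integrand, which makes your intended interchange of sum and integral much harder to justify. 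I would recommend reading the paper's Lemma~\ref{lem:sbp1} and its proof; it is the crucial new idea you are missing, and without something playing its role your route does not go through.
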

We will spend the rest of this section proving this proposition.

Define the functions $f_{i,j}(u)$ and $\widetilde{g}_{i,j}(u)$, for $u \in \Z$, by
\begin{align*}
f_{i,j}(u) &= \intz{R_1} \frac{dz}{z}\, \frac{z^u}{(zb_i)^{x_i}} \frac{\prod_{k=1}^j(z- 1/b_k)}{\prod_{k=1}^i (z- 1/b_k)} \, a(z), \quad R_1 > \max_k \{1/b_k\}\, ;\\
\widetilde{g}_{i,j}(u) &= \frac{1}{2\pi \mathbold{i}} \oint \limits_{|w|=R_2} dw \, (a_m w)^u (wp_j)^{y_j} \frac{\prod_{k=1}^j(w- 1/p_k)}{\prod_{k=1}^i (w- 1/p_k)} \,
\frac{1}{w-1},\quad R_2 > \max_k \{1/p_k \}.
\end{align*}
It will be useful later to note that $f_{i,j}$ and $\widetilde{g}_{i,j}$ vanish for sufficiently negative values of $u$,
namely $f_{i,j}(u) = 0$ for $u \leq x_1-N$ and $\widetilde{g}_{i,j}(u) = 0$ for $u \leq - (y_N + N)$. This is
because the integrands then decay at least to the order $|z|^{-2}$ and so the contours can be contracted to infinity.

By factoring out $b_j^{u_j}$ from the columns of $B(i,j)$ and $b_i^{-u_i}$ from the rows of $A(i,j)$, and using that $p_i/b_i \equiv a_m$,
we see that $\dt{B} = \prod_k b_k^{u_k} \dt{f_{i,j}(u_j)}$ and $\dt{A} = \prod_k b_k^{-u_k} (1-p_k) \dt{\widetilde{g}_{i,j}(-u_i)}$.
Consequently, upon transposing $\widetilde{g}_{i,j}$ to $\widetilde{g}_{j,i}$,
\begin{equation} \label{eqn:QP}
\sum_{u \in \W_N} Q(x,u) P(u,y) = \prod_{j=1}^N \prod_{k=1}^m (1-a_kb_j) \sum_{u \in \W_N} \dt{f_{i,j}(u_j)} \dt{\widetilde{g}_{j,i}(-u_j)}.
\end{equation}

By changing variables $w \mapsto w/a_m$ in the integral defining $\widetilde{g}_{i,j}$ we find that
$$\widetilde{g}_{i,j}(u) = \frac{a_m^{i-j}}{2 \pi \mathbold{i}} \oint \limits_{|w|=R_2} dw \, w^u (wb_j)^{y_j}
\frac{\prod_{k=1}^j(w- 1/b_k)}{\prod_{k=1}^i (w- 1/b_k)} \, \frac{1}{w-a_m}, \quad R_2 > \max_k \{1/b_k \}.$$
We may remove $a_m^{i-j}$ from the determinant as it is a conjugation factor. So, if we define $g_{i,j}(u) = a_m^{j-i} \widetilde{g}_{i,j}(u)$, we have that
$$\pr{\vec{\Gb}(m)= y \vt \vec{\Gb}(0)= x} = \prod_{j=1}^N \prod_{k=1}^m (1-a_kb_j) \sum_{u \in \W_N} \dt{f_{i,j}(u_j)} \dt{g_{j,i}(-u_j)}.$$

\begin{lem} \label{lem:sbp1}
It holds that
$$\sum_{u \in \W_N} \dt{f_{i,j}(u_j)} \dt{g_{j,i}(-u_j)} = \sum_{u \in \W_N} \dt{f_{i,0}(u_j)} \dt{g_{0,i}(-u_j)}.$$
\end{lem}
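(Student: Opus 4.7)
The plan is a summation-by-parts argument that transfers the $j$-dependent factors from the $f$-determinant to the $g$-determinant, converting both to their $j=0$ counterparts. The starting point is a pair of recurrences obtained by splitting the factor $(\zeta - 1/b_j)$ off the polynomial products in the integrands of $f_{i,j}(u)$ and $g_{j,i}(u)$:
\[
f_{i,j}(u) = f_{i,j-1}(u+1) - b_j^{-1} f_{i,j-1}(u), \qquad g_{j-1,i}(u) = g_{j,i}(u+1) - b_j^{-1} g_{j,i}(u).
\]
Iterating yields $f_{i,j}(u) = \prod_{k=1}^j (T - b_k^{-1})\, f_{i,0}(u)$ and $g_{0,i}(u) = \prod_{k=1}^j (T - b_k^{-1})\, g_{j,i}(u)$, where $T$ denotes the unit forward shift in the argument.

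Because $u_j$ enters only in column $j$ of $[f_{i,j}(u_j)]_{i,j}$ and only in row $j$ of $[g_{j,i}(-u_j)]_{j,i}$, multilinearity of the determinant promotes these relations to
\[
\dt{f_{i,j}(u_j)} = \prod_{j=1}^N \prod_{k=1}^j (T_j - b_k^{-1})\, \dt{f_{i,0}(u_j)}, \qquad \dt{g_{0,i}(-u_j)} = \prod_{j=1}^N \prod_{k=1}^j (S_j - b_k^{-1})\, \dt{g_{j,i}(-u_j)},
\]
where $T_j$ shifts $u_j \mapsto u_j+1$ and $S_j = T_j^{-1}$, the inverse shift appearing because $g$ is evaluated at $-u_j$. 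Substituting the first identity into the LHS, I am left to move the operator $\prod_{j,k}(T_j - b_k^{-1})$ from the $f$-factor onto the $g$-factor. Over unrestricted summation in $u_j \in \Z$ the adjoint of $T_j$ is $S_j$, so each $T_j$ becomes an $S_j$ upon transfer, the full operator turns into $\prod_{j,k}(S_j - b_k^{-1})$, and the second identity collapses $\dt{g_{j,i}(-u_j)}$ into $\dt{g_{0,i}(-u_j)}$, producing the RHS.

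The chief difficulty is that the sum runs over the Weyl chamber $\W_N$ rather than $\Z^N$, so the shifts $T_j$ may push $u_j$ across the constraint $u_{j-1} \leq u_j \leq u_{j+1}$ and introduce boundary terms. Two structural inputs are decisive for their cancellation. First, $\dt{f_{i,0}(u_j)}$ is fully antisymmetric under permutations of $(u_1, \ldots, u_N)$ -- column $j$ depends on the column index only through $u_j$ -- and hence vanishes on every coincidence locus $u_a = u_b$, which annihilates the boundary contributions concentrated on the walls of $\W_N$. Second, $f_{i,j}(u)$ and $g_{j,i}(-u)$ vanish at sufficiently extreme $u$, as noted earlier in the paper by contour contraction, so the effective support of the summand is bounded and no boundary term can arise at infinity. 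Careful bookkeeping of the iterated summation by parts with these two inputs is the main technical task and should yield a telescoping cancellation that leaves only the RHS.
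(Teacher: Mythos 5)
Your overall plan---using the recurrences $f_{i,j} = \dr(b_j)f_{i,j-1}$ and $g_{j-1,i} = \dr(b_j)g_{j,i}$ and then summation by parts to move the shift operators from the $f$-determinant to the $g$-determinant---is essentially the same as the paper's, and your algebraic setup (writing $\dt{f_{i,j}(u_j)} = \prod_{j}\prod_{k\le j}(T_j-b_k^{-1})\dt{f_{i,0}(u_j)}$ and observing $T_j^{*}=S_j$ over $\Z$) is correct. The gap is in your claimed mechanism for killing the boundary terms on the walls of $\W_N$.

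You attribute the cancellation to the antisymmetry of $\dt{f_{i,0}(u_j)}$ in $(u_1,\ldots,u_N)$, but the wall boundary terms produced by an iterated summation by parts do not evaluate $\dt{f_{i,0}(u_j)}$ on a coincidence locus. At an intermediate stage, where only some factors of $\prod_j\prod_{k\le j}(T_j-b_k^{-1})$ have been transferred, multilinearity shows that the $f$-side factor appearing in the boundary term is a determinant of the form $\dt{f_{i,m_1}(u_1),\ldots,f_{i,m_N}(u_N)}$ with a column-dependent sequence $(m_1,\ldots,m_N)$. Such a determinant is not antisymmetric in $(u_1,\ldots,u_N)$ and does not automatically vanish when $u_{j-1}=u_j$. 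What actually makes the boundary determinants vanish is a coincidence that depends on the order of transfer: if the outermost derivative is peeled off from column $N$ down to the relevant minimum, round by round, then at the moment the $u_j$-sum is integrated by parts the two adjacent columns $j-1$ and $j$ of the current $f$-determinant (or, for the opposite endpoint, the current $g$-determinant) carry the same function of their respective arguments, so setting $u_j = u_{j-1}$ forces two equal columns. Your ``careful bookkeeping'' remark acknowledges the remaining work but does not supply the ordering that drives this telescoping, and the antisymmetry of $\dt{f_{i,0}(u_j)}$ alone is not the structural fact that closes the argument. (Your second input---the vanishing of $f_{i,j}(u)$ and $g_{j,i}(-u)$ for extreme $u$, which disposes of the semi-infinite boundaries in columns $1$ and $N$---is correct and is what the paper uses.)
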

We will prove this lemma later in $\S$\ref{sec:lemsbp1}.
For now, it allows us to complete the proof of Proposition \ref{prop:1} by using the Cauchy-Binet identity:
$$ \sum_{u \in \W_N} \dt{f_{i,0}(u_j)} \dt{g_{0,i}(-u_j)} = \dt{ \sum_{u \in \Z} f_{i,0}(u)g_{0,j}(-u)}.$$

Observe that
$$f_{i,0}(u) = \intz{R_1} \frac{dz}{z} \, \frac{z^u}{(zb_i)^{x_i}} \prod_{k=1}^i (z- 1/b_k)^{-1} a(z),$$
and $f_{i,0}(u)$ vanishes when $u < x_1$. Indeed, for $u < \min_i \{x_i\} = x_1$, the integrand decays
at least to the order $|z|^{-2}$ as $|z| \to \infty$ (recall $a(z)$ is bounded and analytic outside the unit disk).
So we may contract the contour to $\infty$ when $u < x_1$. Consequently,
\begin{align*}
&\sum_{u \in \Z} f_{i,0}(u) g_{0,i}(-u)  = \sum_{u \geq x_1} f_{i,0}(u) g_{0,i}(-u) \\
& = \frac{1}{(2 \pi \mathbold{i})^2} \oint \limits_{|z|=R_1} dz \oint \limits_{|w|=R_2} dw\, \frac{(b_jw)^{y_j}}{(b_iz)^{x_i}}
\frac{\prod_{k=1}^j (w-1/b_k)}{\prod_{k=1}^i (z-1/b_k)} \frac{a(z)}{z(w-a_m)} \times \left [ \sum_{u \geq x_1} (z/w)^u\right ].
\end{align*}
Arranging the contours such that $|z/w| = R_1 / R_2 < 1$, the above equals
$$\frac{1}{(2 \pi \mathbold{i})^2} \oint \limits_{|z|=R_1} dz \oint \limits_{|w|=R_2} dw\, \frac{(b_jw)^{y_j}}{(b_iz)^{x_i}}
\frac{\prod_{k=1}^j (w-1/b_k)}{\prod_{k=1}^i (z-1/b_k)} \frac{a(z)(z/w)^{x_1}}{z(w-z)(1-a_m/w)}\,.$$ 

The $z$-contour may be contracted to $\infty$ since the integrand decays at least to the order $|z|^{-2}$ (due to $x_1-x_i \leq 0$ for every $i$).
However, doing so encounters a pole at $z = w$ since $R_1 < R_2$. The residue there gives
$$\sum_{u \in \Z} f_{i,0}(u) g_{0,j}(-u) =
\intz{R} \frac{dw}{w} \frac{(b_jw)^{y_j}}{(b_iw)^{x_i}} \frac{\prod_{k=1}^j (w-1/b_k)}{\prod_{k=1}^i (w-1/b_k)} \frac{a(w)}{(1-a_m/w)}.$$
The quantity above is precisely $K(i,j)$, so from \eqref{eqn:QP} it follows that
$$\pr{\vec{\Gb}(m) = y \vt \vec{\Gb}(0) = x} = \prod_{j=1}^N \prod_{k=1}^m (1-a_kb_j) \dt{K(i,j)}.$$
Proposition \ref{prop:1} is thus proved.

\subsubsection{Proof of Lemma \ref{lem:sbp1}} \label{sec:lemsbp1}
Introduce the operators $\dr(b)$, for $b \in \C$, acting on functions $f: \Z \to \C$ according to
\begin{equation} \label{eqn:drb} \dr(b) f(x) = f(x+1) - \frac{1}{b} f(x).\end{equation}
We will call these operators derivatives. The operator $\dr(b)$ is invertible over the space of functions $f$
that decay rapidly at $-\infty$, namely those $f$ for which $|f(x)| \leq \rho^x$ as $x \to -\infty$ for some $\rho > 1/|b|$.
Then,
\begin{equation} \label{eqn:drinv} \dr(b)^{-1} f( x) = \sum_{n < 0} b^{n+1} f(x+n).\end{equation}
These operators commute over all values of $b$.

We observe that
$$ \dr(b_j) f_{i,j-1} = f_{i,j} \quad \text{and} \quad \dr(b_j) g_{j,i} = g_{j-1,i}.$$
Indeed, for $f_{i,j}$ we have that
$$ \dr(b_j) f_{i,j-1}(u) = \intz{R} dz\, \dr(b_j)[z^u](u) \frac{\prod_{k=1}^{j-1} (z- 1/b_k)}{\prod_{k=1}^i (z-1/b_k)} a(z).$$
Since $\dr(b_j)[z^u] = z^u (z - 1/b_j)$, the identity follows. The calculation involving $g_{j,i}$ is similar.

Next, we make use of the summation by parts identity
\begin{equation} \label{eqn:sbp}
\sum_{x=a}^b [\dr(c)f](x) g(-x) = \sum_{x=a}^b f(x) [\dr(c)g](-x) + f(b+1)g(-b) - f(a)g(-a+1).
\end{equation}

We can prove Lemma \ref{lem:sbp1} by repeatedly using the summation by parts identity to move derivatives from
the determinant involving $f_{i,j}$s to the one involving $g_{j,i}$s. The boundary terms need to be zero, which
will be the case if we move derivatives in the proper order.

The order is that, first, we remove the last derivatives from every $f_{i,j}$ by going down from column $N$ to $1$.
Then the $f_{i,j}$ will become $f_{i,j-1}$ and the $g_{j,i}$ will turn to $g_{j-1,i}$ along their respective columns.
After this, we move the last derivative again from column $N$ down to column 2, reducing $f_{i,j-1}$ to $f_{i,j-2}$
and $g_{j-1,i}$ to $g_{j-2,i}$. Continuing like this from columns $N$ down to $k$ for every $k = N, N-1, \ldots, 1$
gives the desired result.

The boundary terms, which are determinants, always vanish because some two consecutive columns are equal or a
column is identically zero (the latter occurs for columns $N$ and 1). This is best illustrated by the first 2 applications
of the summation by parts identity, as shown below.

\paragraph{\emph{First application of \eqref{eqn:sbp}.}}
\begin{align*}
& \sum_{u \in \W_N} \dt{f_{i,j}(u_j)} \dt{g_{j,i}(-u_j)} = \\
& \sum_{u \in W_{N-1}} \sum_{u_N = u_{N-1}}^{\infty} \dt{f_{i,1}(u_1) \cdots \dr(b_N)f_{i,N-1}(u_N)} \dt{g_{1,i}(-u_1) \cdots g_{N,i}(-u_N)} \\
& \overset{\eqref{eqn:sbp}}{=} \sum_{u \in \W_N} \dt{f_{i,1}(u_1) \cdots f_{i,N-1}(u_N)} \dt{g_{1,i}(-u_1) \cdots g_{i,N-1}(-u_N)} \\
& + \lim_{u_N \to \infty} \dt{f_{i,1}(u_1) \cdots f_{i,N-1}(u_N)} \dt{g_{1,i}(-u_1) \cdots g_{N,i}(-u_N)} \\
& - \dt{\cdots \underbrace{f_{i,N-1}(u_{N-1}) \, f_{i,N-1}(u_{N-1})}_{equal}} \dt{\cdots g_{N,i}(-u_{N-1}+1)}.
\end{align*}
Notice that $g_{N,i}(-u_N) = 0$ for $u_N \geq N+y_N$ because then the contour in its definition may be contracted to $\infty$;
so the first boundary term vanishes. The second boundary term vanishes due to the last two columns in its first determinant being equal.

\paragraph{\emph{Second application of \eqref{eqn:sbp}.}} Following the first application, our sum equals
$$\sum_{u \in \W_N} \dt{f_{i,1}(u_1)\cdots f_{i,N-1}(u_{N-1}) f_{i,N-1}(u_N)} \dt{g_{1,i}(-u_1) \cdots g_{N-1,i}(-u_{N-1}) g_{N-1,i}(-u_N)}.$$
Writing $f_{i,N-1}(u_{N-1}) = \dr(b_{N-1}) f_{i,N-2}(u_{N-1})$ and considering only the sum of the variable $u_{N-1}$ above, holding all others fixed,
and then applying summation by parts in the variable $u_{N-1}$, results in the $u_{N-1}$-sum
\begin{align*}
& \sum_{u_{N-1} = u_{N-2}}^{u_N} \dt{\cdots f_{i,N-2}(u_{N-2}) f_{i,N-2}(u_{N-1}) f_{i,N-1}(u_{N})} \times \\
& \qquad \qquad \dt{\cdots g_{N-2,i}(-u_{N-2}) g_{N-2,i}(-u_{N-1}) g_{N-1,i}(-u_N)}  \\
& + \dt{\cdots f_{i,N-2}(u_{N}+1) f_{i,N-1}(u_N)} \dt{\cdots \underbrace{g_{N-1,i}(-u_N) \, g_{N-1,i}(-u_N)}_{equal}}\\
& - \dt{\cdots \underbrace{f_{i,N-2}(u_{N-2}) \, f_{i,N-2}(u_{N-2})}_{equal}\,  f_{i,N-1}(u_N)} \dt{\cdots g_{N-1,i}(-u_{N-2}+1) g_{N-1,i}(-u_N)}.
\end{align*}
The boundary terms vanish as desired and the lemma is proved continuing in this way.

\subsection{Multi-spatial distribution at a single time} \label{sec:3}

\subsubsection{Proof of Theorem \ref{thm:2}}
The distribution function in Theorem \ref{thm:2} is obtained from the Markovian transition formula from Theorem \ref{thm:1}.
Using Theorem \ref{thm:1},
$$\pr{G(m,n_1) < h_1, \ldots, G(m,n_p) < h_p \vt \vec{\Gb}(0) =x} = \sum_{\substack{y \in \W_N \\ y_{n_k} < h_k }} \dt{M(i,j \vt x,y)}.$$
We may assume that $h_1 \leq h_2 \leq \cdots \leq h_p$.

The sum over $y \in \W_N$ can be performed from the last column (involving variable $y_N$) down to the first.
The summation over $y_N$ involves only the last column of $M$, which then moves into column $N$ by multi-linearity.
It equals
\begin{equation} \label{eqn:colNsum}
\sum_{y_{N-1} \leq y_N < h_p} \intz{R} \frac{dz}{z}\, \frac{(zb_N)^{y_N}}{(zb_i)^{x_i}}
\frac{\prod_{k=1}^N (z- 1/b_k)}{\prod_{k=1}^i (z-1/b_k)} \prod_{k=1}^m \frac{1-a_kb_j}{1-a_k/z}.
\end{equation}
Now
$$\sum_{y_{N-1} \leq y < h_p} (zb_N)^{y_N} = \frac{1}{b_N} \, \frac{(zb_N)^{h_p} - (zb_N)^{y_{N-1}}}{z- 1/b_N}.$$
As a result, \eqref{eqn:colNsum} becomes the difference of two terms: $(I) - (II)$ where
$$ (I) = \intz{R} dz\, \frac{(zb_N)^{h_p-1}}{(zb_i)^{x_i}} \frac{\prod_{k=1}^{N-1} (z- 1/b_k)}{\prod_{k=1}^i (z-1/b_k)} \prod_{k=1}^m \frac{1-a_kb_N}{1-a_k/z}$$
and
$$(II) = \intz{R} \frac{dz}{zb_N} \, \frac{(zb_N)^{y_{N-1}}}{(zb_i)^{x_i}} \frac{\prod_{k=1}^{N-1} (z- 1/b_k)}{\prod_{k=1}^i (z-1/b_k)} \prod_{k=1}^m \frac{1-a_kb_N}{1-a_k/z}.$$
Observe term $(II)$ is a multiple of column $N-1$ of $M$ by a factor of
$$\frac{b_N^{y_{N-1}-1}}{b_{N-1}^{y_{N-1}}} \prod_{k=1}^m \frac{1-a_kb_N}{1-a_k b_{N-1}}.$$
So it does not affect the determinant. Term $(I)$ is column $N$ of $F$. So,
$$\sum_{\substack{y \in \W_N \\ y_{n_k} < h_k}} \dt{M(i,j \vt x,y)} =
\sum_{\substack{y \in \W_{N-1} \\ y_{n_k} < h_k}} \dt{\underbrace{M(i,j \vt x,y)}_{j \leq N-1} \, F(i,N \vt x)}.$$

We can continue to perform sums in the above manner. The summation over $y_{N-1}$ will be over the range $y_{N-2} \leq y_{N-1} < h_p$.
Once we get to variable $y_{n_{p-1}}$, the range of summation becomes $y_{n_{p-1}-1} \leq y_{n_{p-1}} < h_{p-1}$.
The result after the summations over variables $y_N$ to $y_{N-\ell+1}$ are performed equals
$$ \sum_{\substack{y \in \W_{N-\ell} \\ y_{n_k} < h_k}} \dt{\underbrace{M(i,j \vt x,y)}_{j \leq N - \ell} \, \underbrace{F(i, j \vt x)}_{j > N-\ell}}.$$
Continuing in this way gives the determinantal expression in Theorem \ref{thm:2}.

\subsubsection{Fredholm determinant} \label{sec:fredholm}
The determinant of $F$ from Theorem \ref{thm:2} can be expressed as a Fredholm determinant in the following way.
Write $F = F_a + F_b$, where
$$ F_{\ell}(i,j \vt x) = \frac{1}{2 \pi \mathbold{i}} \oint \limits_{\gamma_{\ell}} dz\,
\frac{(zb_j)^{h(j)-1}}{(zb_i)^{x_i}} \frac{\prod_{k=1}^{j-1}(z-1/b_k)}{\prod_{k=1}^i (z- 1/b_k)} \prod_{k=1}^m \frac{1-a_kb_j}{1-a_k/z},$$
and $\gamma_b$ is a contour including only the poles at $z = 1/b_k$ (but none of the poles at $z = a_k$)
and $\gamma_a$ is the complementary contour
containing the poles at $z = a_k$. This is possible since $a_k < 1/b_j$ for every $k$ and $j$.

The matrix $F_b$ is lower triangular with 1s on the diagonal. Indeed, when $i < j$, the integrand of $F_b$ has no poles at $z = 1/b_k$
and the contour $\gamma_b$ may be contracted to a point. When $i=j$, there is a single pole of the integrand at $z = 1/b_j$
and, as the contour is contracted, the residue there gives
$$F_b(j,j) = \frac{(b_j^{-1}b_j)^{y_j}}{(b_j^{-1}b_j)^{x_j}} \prod_{k=1}^m \frac{1-a_kb_j}{1-a_kb_j} = 1.$$

Being an $N \times N$ lower triangular matrix with 1s on the diagonal, $F_b$ has determinant 1 and an inverse given by
$$F_b^{-1} = \sum_{k=0}^{N-1} (I-F_b)^k$$
since $(I-F_b)^N = 0$. As a result,
\begin{equation} \label{eqn:FredholmF}
\dt{F(i,j \vt x)} = \dt{I + F_b^{-1}F_a} = \dt{I + \sum_{k=0}^{N-1} (I-F_b)^k F_a}.
\end{equation}

The inverse of $F_b$ has a simple expression when the initial condition $x$ is zero.
In general, one can write a tractable expression when $x$ follows an arithmetic pattern such as $x_i = c +id$.
See \cite{MQR} for recent work on general initial conditions. The following expression is obtained
from the orthogonalization above and a conjugation of the resulting matrix.
\begin{prop} \label{prop:spatialFredholm}
Consider the polynuclear growth model with $\vec{\Gb}(0) = 0$. Then for $n_1 < n_2 < \cdots < n_p = N$,
$$\pr{G(m,n_1) \leq h_1, \ldots, G(m,n_p) \leq h_p} = \dt{I + F}_{N \times N},$$
where $F$ has a $p \times p$ block structure according to the partition $[N] = (0, n_1] \cup (n_1,n_2] \cup \cdots \cup (n_{p-1}, n_p]$
of the rows and columns. Let $F(r,i;s,j) = \ind{i \in (n_{r-1},n_r], \, j \in (n_{s-1},n_s]} F(i,j)$ denote the $(r,s)$ block of $F$. Then,
\begin{align*}
F(r,i;s,j) = & \, \ind{r > s} \, \frac{1}{2 \pi \mathbold{i}} \oint \limits_{\gamma_b} d \zeta \,
\prod_{k=j}^i (\zeta-1/b_k)^{-1} \zeta^{h_s- h_r} \;\; + \\
& \frac{1}{(2 \pi \mathbold{i})^2} \oint \limits_{\gamma_b} d \zeta\oint \limits_{\gamma_a} dz\,
\frac{\prod_{k=1}^{j-1} (z- 1/b_k)}{\prod_{k=1}^i (\zeta-1/b_k)} \frac{z^{h_s-1}}{\zeta^{h_r-1}}
\prod_{k=1}^m \frac{(1-a_k/\zeta)}{(1-a_k/z)} \frac{1}{z-\zeta}
\end{align*}
where $\gamma_a$ is a contour enclosing the poles only at $z = a_k$ and $\gamma_b$ is a contour enclosing poles only at $\zeta = 1/b_k$.
The inverse of the matrix $F_b$ above is
$$F_b^{-1}(i,j) = \frac{1}{2 \pi \mathbold{i}} \oint \limits_{\gamma_b} dz \, \frac{\prod_{k=1}^{j-1}(z-1/b_k)}{\prod_{k=1}^i(z-1/b_k)}
(z b_i)^{1-h(i)} \prod_{k=1}^m \frac{1-a_k/z}{1-a_kb_i}.$$
\end{prop}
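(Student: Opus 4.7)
The plan is to start from the Fredholm formulation $\det(F)=\det(I+F_b^{-1}F_a)$ obtained in equation \eqref{eqn:FredholmF}, then derive both the contour-integral formula for $F_b^{-1}$ and the claimed two-term expression for the kernel by evaluating $F_b^{-1}F_a$ explicitly.

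I would first verify the stated formula for $F_b^{-1}$: since $F_b$ is lower-triangular with unit diagonal, its inverse is unique, and one checks the claim by computing $F_bF_b^{-1}$ entry-wise. Substituting both contour integrals and summing over the intermediate index $k$, the key telescoping identity
$$\sum_{k=j}^{i}\frac{\prod_{\ell=1}^{k-1}(\zeta-1/b_\ell)}{\prod_{\ell=1}^{k}(z-1/b_\ell)} = \frac{1}{z-\zeta}\!\left[\frac{\prod_{\ell=1}^{j-1}(\zeta-1/b_\ell)}{\prod_{\ell=1}^{j-1}(z-1/b_\ell)} - \frac{\prod_{\ell=1}^{i}(\zeta-1/b_\ell)}{\prod_{\ell=1}^{i}(z-1/b_\ell)}\right]$$
collapses the sum to a difference of two double contour integrals, and a residue computation at the $1/b_k$ poles enclosed by $\gamma_b$ reduces the result to $\delta_{ij}$.

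Next, apply the same telescoping identity to compute the kernel $F_b^{-1}F_a$, now with $k$ running from $1$ to $i$. This splits the kernel into two double contour integrals. A diagonal conjugation by $D=\mathrm{diag}\!\bigl(b_i^{1-h(i)}/\!\prod_k(1-a_kb_i)\bigr)$, which preserves the Fredholm determinant, absorbs the row-$i$ and column-$j$ prefactors so that the first integral matches the double-integral piece of the proposition. The second integral is then reduced to the single-integral $\ind{r>s}$ term by a contour deformation: pushing $\gamma_a$ outward past $\gamma_b$ to capture the pole at $z=\zeta$ converts the $z$-integral to a residue, leaving a $\zeta$-integral whose integrand simplifies to $\prod_{k=j}^{i}(\zeta-1/b_k)^{-1}\,\zeta^{h_s-h_r}$. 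The indicator $\ind{r>s}$ arises because for $r\le s$ the resulting product over $k=j,\ldots,i$ is either empty or reducible through cancellation with numerator factors, so the integrand has no enclosed poles and the single integral vanishes.

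The main obstacle is the contour deformation producing the $\ind{r>s}$ term: one must track signs carefully, confirm that no contribution is lost to the residue at infinity, and verify the vanishing in the region $r\le s$. Everything else is bookkeeping---telescoping the sum, conjugating to strip off row/column prefactors, and evaluating residues---but the contour manipulation that reorganizes the raw $F_b^{-1}F_a$ kernel into the Eynard--Mehta-style form with its block indicator is the structural heart of the argument.
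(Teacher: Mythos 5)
Your plan has a structural gap centered on the origin of the $\ind{r>s}$ term, and it stems from taking the label ``$F_b^{-1}$'' in the proposition too literally. The stated matrix is \emph{not} the inverse of $F_b$ once the block thresholds $h_r$ differ; your step 1, if carried through honestly, would expose this. First, the telescope you quoted applies to $F_b^{-1}F_b$, not $F_bF_b^{-1}$: in the latter order the factors $(zb_k)^{h(k)-1}\prod_\ell(1-a_\ell b_k)$ and $(\zeta b_k)^{1-h(k)}\prod_\ell(1-a_\ell b_k)^{-1}$ do cancel, but they leave the $k$-dependent power $(z/\zeta)^{h(k)-1}$, which is non-constant across blocks and breaks the telescope. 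Taking the order $F_b^{-1}F_b$ and telescoping (both contours are $\gamma_b$; take the $z$-circle just outside the $\zeta$-circle), the only surviving contribution is the residue at $z=\zeta$, which reduces $(F_b^{-1}F_b)(i,j)$, after conjugation by $c_j=b_j^{h(j)-1}\prod_k(1-a_kb_j)$, to $\frac{1}{2\pi\mathbold{i}}\oint_{\gamma_b}\zeta^{\,h(j)-h(i)}\prod_{k=j}^{i}(\zeta-1/b_k)^{-1}\,d\zeta$. When $i>j$ lie in the same block this vanishes: the integrand has no pole at the origin or at infinity (it decays like $\zeta^{-(i-j+1)}$), so the enclosed residues sum to zero. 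But when $r>s$, the power $\zeta^{\,h(j)-h(i)}$ with $h(j)<h(i)$ creates a pole at the origin, the residues no longer cancel, and $F_b^{-1}F_b=I+G$ with $G$ supported exactly on $\ind{r>s}$ --- and $G$, after the conjugation, is precisely the first term of the proposition's kernel. (A concrete check: $N=2$, $b_1=1$, $b_2=1/2$, $m=0$, $h_1=1$, $h_2=3$ gives $F_b(2,1)=0$ but $F_b^{-1}(2,1)=-3$.)

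Consequently step 4 cannot deliver the $\ind{r>s}$ term from $F_b^{-1}F_a$. After the telescope, the second double integral has $\zeta$-integrand $\zeta^{1-h(i)}\prod_\ell(1-a_\ell/\zeta)/(z-\zeta)$, whose only poles are at $\zeta=0$ and at $\zeta=z\in\gamma_a$; neither is inside $\gamma_b$, so that piece is identically zero before any deformation. Pushing the $z$-contour outward does capture the $z=\zeta$ residue you want, but it simultaneously captures residues at $z=1/b_k$ and at infinity which cancel it exactly --- as they must, since the piece was already zero.

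The argument you actually need is: $\det F=\det(LF)=\det\bigl(I+LF_a+(LF_b-I)\bigr)$, where $L$ is the proposition's lower-triangular unit-diagonal matrix; $\det L=1$ regardless of whether $L$ is a true inverse of $F_b$. Your telescope and diagonal conjugation applied to $LF_a$ yield only the double-integral piece of the kernel (the second telescoping term vanishes as above), while $LF_b-I$ supplies the $\ind{r>s}$ single integral. Your two key tools are the right ones; the gap is that the decomposition $F_b^{-1}F_a$ alone cannot carry the $\ind{r>s}$ contribution.
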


We remark that asymptotic analysis of the matrix $F$ will lead to the kind of extended Airy kernels encountered
in limit distributions along space-like paths in the KPZ universality class; see for instance \cite{BFS, BoGo, BP, CoKPZ, IS, JoDPG, QuKPZ}.

\section{The two-time distribution} \label{partII}

\subsection{Two-time distribution of the inhomogeneous model}  \label{sec:4}
In this section we will prove Theorem \ref{thm:twotime}, building up to it along a sequence of lemmas.
We will then explain in $\S$\ref{sec:6} how the determinant from Theorem \ref{thm:twotime} can be expressed
as a Fredholm determinant, which is often better for doing asymptotics.

First, we introduce some notation to manage the upcoming calculations.
Define the following functions for $z \in \C$, $x \in \Z$ and positive integers $j,\ell$ and $m$.
\begin{align}
\label{eqn:aj} a_j(z \vt (\ell,m]) &= \prod_{k= \ell + 1}^m \frac{1-a_kb_j}{1-a_k/z} \quad \text{for}\;\; j\in [N] \;\text{and} \; \ell < m. \\
\label{eqn:wj} w_j(x \vt (\ell, m]) &= \intz{R} dz\, z^{x-1} \, a_j(z | (\ell, m]) \quad R > \max \{1/b_1, \ldots, 1/b_N\}.
\end{align}

Recall the commuting operators $\dr(b)$ from \eqref{eqn:drb} which act by $\dr(b)f(x) = f(x+1) - b^{-1}f(x)$.
In this section they will act on functions that vanish identically to the left of some integer, for
whom the inverse of $\dr(b)$ may be applied according to \eqref{eqn:drinv}. More specifically,
they will act on the functions $w_j(x)$ above, and note these functions vanish when $x < 0$
because the $z$-contour can then be contracted to infinity. Consequently, define the operators
\begin{equation} \label{eqn:drij}
\dr(b_{(i,j]}) = \prod_{k=1}^j \dr(b_k) \prod_{k=1}^i \dr(b_k)^{-1} \quad \text{for}\; i, j \in [N].
\end{equation}

\subsubsection{Three lemmas}

\begin{lem} \label{lem:A}
The transition matrix of $\vec{\Gb}$ from Theorem \ref{thm:1} can be expressed as
$$\pr{\vec{\Gb}(m) = y \vt \vec{\Gb}(\ell)=x} = \prod_{j=1}^n b_j^{y_j-x_j} \, \dt{\dr(b_{(i,j]}) \cdot w_j(y_j-x_i \vt (\ell,m])}.$$
\end{lem}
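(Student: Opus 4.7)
The plan is to take the formula from Theorem \ref{thm:1}, shift it to start from time $\ell$ instead of $0$, and then recognize the contour integral as the result of applying the derivative operator $\dr(b_{(i,j]})$ to $w_j$. I expect this to be essentially a bookkeeping exercise rather than a deep argument; the main content is identifying how the operators $\dr(b_k)$ and $\dr(b_k)^{-1}$ act on contour integrals of the form $\intz{R} dz\, z^{x-1} g(z)$.

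First, I would observe that because the weights $\omega(i,j)$ are independent and the parameters $a_k$ are indexed by the horizontal step, the transition kernel from time $\ell$ to time $m$ is obtained from the formula in Theorem \ref{thm:1} by replacing the product $\prod_{k=1}^m$ with $\prod_{k=\ell+1}^m$. With the shorthand $a_j(z \vt (\ell,m])$ from \eqref{eqn:aj}, the matrix entry becomes
\[
M(i,j \vt x,y) = \intz{R} \frac{dz}{z}\, \frac{(zb_j)^{y_j}}{(zb_i)^{x_i}}\,
\frac{\prod_{k=1}^j(z - 1/b_k)}{\prod_{k=1}^i(z - 1/b_k)}\, a_j(z \vt (\ell, m]).
\]

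Next I would factor $(zb_j)^{y_j} = z^{y_j} b_j^{y_j}$ and $(zb_i)^{x_i} = z^{x_i}b_i^{x_i}$, and pull the row factor $b_i^{-x_i}$ and column factor $b_j^{y_j}$ out of the determinant. The combined prefactor is $\prod_i b_i^{-x_i} \prod_j b_j^{y_j} = \prod_j b_j^{y_j - x_j}$, matching the prefactor in the statement. What remains inside the determinant is
\[
\intz{R} dz\, z^{y_j - x_i - 1} \,\frac{\prod_{k=1}^j(z-1/b_k)}{\prod_{k=1}^i(z-1/b_k)}\, a_j(z \vt (\ell,m]).
\]

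Finally I would verify that this quantity equals $\dr(b_{(i,j]}) \cdot w_j(y_j - x_i \vt (\ell,m])$. The key computation is that if $f(u) = \intz{R} dz\, z^{u-1} g(z)$ (with $g$ holomorphic outside the contour, appropriately decaying), then $f(u+1) - b^{-1} f(u) = \intz{R} dz\, z^{u-1}(z - 1/b) g(z)$, so $\dr(b)$ multiplies the integrand of such a representation by $(z - 1/b)$. Since $R > \max_k \{1/b_k\}$ keeps all poles $1/b_k$ inside the contour, the inverse $\dr(b_k)^{-1}$ correspondingly divides the integrand by $(z - 1/b_k)$; this is consistent with the series representation \eqref{eqn:drinv} applied to $w_j$, which vanishes for negative arguments. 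Applying the product $\dr(b_{(i,j]}) = \prod_{k=1}^j \dr(b_k) \prod_{k=1}^i \dr(b_k)^{-1}$ to $w_j(\cdot \vt (\ell,m])$ then produces exactly the displayed integrand, evaluated at $y_j - x_i$.

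The main obstacle, if any, is a careful justification that $\dr(b_k)^{-1}$ is well defined on $w_j$, i.e.\ that $w_j(x \vt (\ell, m])$ vanishes for sufficiently negative $x$ and has the required decay to apply \eqref{eqn:drinv}. This follows by contracting the $z$-contour to infinity: for $x \leq 0$ the integrand decays at least like $|z|^{-2}$, so $w_j(x \vt (\ell,m]) = 0$, and the necessary exponential bound at $-\infty$ is trivial. Once this is in place, the identification of the determinant entry with $\dr(b_{(i,j]}) \cdot w_j(y_j - x_i \vt (\ell,m])$ is immediate and the lemma follows.
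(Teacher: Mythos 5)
Your proposal is correct and follows essentially the same route as the paper: rewrite the transition kernel from Theorem \ref{thm:1} with the truncated product over $(\ell,m]$, pull out the row/column factors to get the prefactor $\prod_j b_j^{y_j-x_j}$, and recognize the remaining contour integral as $\dr(b_{(i,j]})\cdot w_j(y_j-x_i)$ by the fact that $\dr(b)^{\pm1}$ multiplies (divides) the integrand by $(z-1/b)^{\pm1}$, valid since $R>\max_k\{1/b_k\}$. (One small slip: $w_j(x\vt(\ell,m])$ vanishes for $x<0$, not $x\le 0$ --- at $x=0$ the residue at infinity gives $\prod_k(1-a_kb_j)\neq 0$ --- but this does not affect the argument since one only needs vanishing for sufficiently negative $x$.)
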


\begin{proof}
Observe from Theorem \ref{thm:1} that $\pr{\vec{\Gb}(m) = y \vt \vec{\Gb}(\ell) = x}$ equals
\begin{equation*}
\prod_{j=1}^N b_j^{y_j-x_j} \intz{R} dz z^{y_j-x_i-1} \frac{\prod_{k=1}^j (z-1/b_k)}{\prod_{k=1}^i (z-1/b_k)} a_j(z \vt (\ell,m]).
\end{equation*}
Next, observe that the function $x \mapsto z^x$ is an eigenfunction of $\dr(b)^{\pm 1}$ with eigenvalue $(z - 1/b)^{\pm 1}$, provided that $|z| > 1/b$.
This is clear for $\dr(b)$; for $\dr(b)^{-1}$ note that when $|z| > 1/b$,
$$ z^x(z-1/b)^{-1} = z^{x-1}(1 - 1/zb)^{-1} = \sum_{n < 0} b^{n+1} z^{n+x} = \dr(b)^{-1}[z^x](x).$$
From this fact we deduce that
$$z^{y_j-x_i} \, \frac{\prod_{k=1}^j (z-1/b_k)}{\prod_{k=1}^i (z-1/b_k)} = \dr(b_{(i,j]}) \big[z^x \big](x = y_j-x_i) \quad \text{for}\; |z| > \max \{1/b_1, \ldots, 1/b_N\}.$$
Finally, the operator $\dr(b)$ applied in the $x$-variable above commutes with the contour integration in the $z$-variable. Therefore,
\begin{align*}
& \intz{R} dz \, z^{y_j-x_i-1} \frac{\prod_{k=1}^j (z-1/b_k)}{\prod_{k=1}^i (z-1/b_k)} a_j(z \vt (\ell,m]) = \\
& \dr(b_{(i,j]}) \left [ \intz{R} dz\, z^{x-1} a_j(z \vt (\ell,m]) \right](x = y_j-x_i) = \\
& \dr(b_{(i,j]}) \cdot w_j(y_j-x_i \vt (\ell,m]). \qedhere
\end{align*}
\end{proof}

\begin{lem} \label{lem:B}
The following identity holds for $x,z \in \W_N$ and $1 \leq n < N$.
\begin{align*}
&\sum_{\substack {y \in \W_N \\ y_n < h}} \dt{\dr(b_{(i,j]}) \cdot w_j(y_j-x_i \vt (0,m])}_{i,j} \dt{\dr(b_{(i,j]}) \cdot w_j(z_j-y_i \vt (m,M])}_{i,j}  = \\
&\sum_{\substack {y \in \W_N \\ y_n < h}}\dt{\dr(b_{(i,n]}) \cdot w_j(y_j-x_i \vt (0,m])}_{i,j} \dt{\dr(b_{(n,j]}) \cdot w_j(z_j-y_i \vt (m,M])}_{i,j}.
\end{align*}
\end{lem}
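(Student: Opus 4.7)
My plan is to adapt the summation-by-parts strategy from Lemma \ref{lem:sbp1}. The starting point is the factorization
$$
\dr(b_{(i,j]}) = \dr(b_{(i,n]}) \cdot \dr(b_{(n,j]}),
$$
which follows from commutativity of the operators $\dr(b_k)$. Using this to rewrite the entries of the first determinant, the lemma will reduce to transferring the column-dependent operator $\dr(b_{(n,j]})$ from column $j$ of the first determinant to row $j$ of the second, where the composition $\dr(b_{(n,j]}) \cdot \dr(b_{(j,\ell]}) = \dr(b_{(n,\ell]})$ produces precisely the operator required on the right-hand side. Since $\dr(b_k)$ acts on the $u$-variable of $w_j(u)$ with $u = y_j - x_i$ in the first determinant, each factor of $\dr(b_{(n,j]})$ can be viewed as an operator in the $y_j$-variable and transferred by summation by parts \eqref{eqn:sbp}.

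A key simplification I would exploit is that $w_j(u \vt (0,m]) = \prod_{k=1}^m(1 - a_kb_j) \cdot W^{(0,m]}(u)$, where the function $W^{(0,m]}$ is independent of $j$; the same holds for $w_j(\cdot \vt (m,M])$. After extracting these scalar prefactors from the columns of both determinants (they cancel between the two sides of the lemma), the entries become $\Phi_{i,j} := \dr(b_{(i,j]}) W^{(0,m]}$ and $\Psi_{i,j} := \dr(b_{(i,j]}) W^{(m,M]}$, satisfying the clean recursions $\dr(b_j) \Phi_{i,j-1} = \Phi_{i,j}$ and $\dr(b_j)\Psi_{i,j-1} = \Psi_{i,j}$, directly analogous to $\dr(b_j) f_{i,j-1} = f_{i,j}$ in Lemma \ref{lem:sbp1}. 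These recursions are what will allow adjacent columns (or rows) of the relevant determinant to match at the boundary terms produced by each SBP.

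The transfer would then be performed round by round. For $j > n$ I would sweep $j = N, N-1, \ldots, n+1$ in each round, removing the current top $\dr$ from column $j$ of the first determinant and adding it to row $j$ of the second; after $r$ rounds column $j$ of the first determinant would be reduced from $\Phi_{i,j}$ to $\Phi_{i,j-r}$ and row $j$ of the second from $\Psi_{j,\ell}$ to $\Psi_{j-r,\ell}$. For $j < n$ the sweep would be $j = 1, 2, \ldots, n-1$ in each round, with the SBP applied in the opposite direction (adding $\dr$ to column $j$ of the first determinant by removing it from row $j$ of the second). No SBP is needed for $y_n$ because $\dr(b_{(n,n]}) = I$, so the constraint $y_n < h$ is transparent to the argument. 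With these sweep orders each interior boundary at $y_j = y_{j \pm 1}$ should yield either two adjacent equal columns of the first determinant or two adjacent equal rows of the second determinant, forcing that determinant to vanish; and the outer boundaries at $y_N \to \infty$ and $y_1 \to -\infty$ should vanish because $W^{(m,M]}$ and $W^{(0,m]}$ are zero for sufficiently negative argument (their defining contours can be contracted to infinity). The main obstacle I expect is the combinatorial bookkeeping of these boundary-term cancellations; once the proportionality $w_j = \prod_k(1-a_kb_j)\cdot W^{(\cdot)}$ has been exploited to reduce to operator-level identities mirroring those of Lemma \ref{lem:sbp1}, the remainder should be a direct adaptation of that proof.
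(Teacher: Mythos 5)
Your proposal is correct and follows essentially the same route as the paper's proof: the same extraction of the $j$-dependent scalar prefactors so the entries become operator-applied versions of a fixed function, the same factorization of $\dr(b_{(i,j]})$ around the pivot index $n$, the same round-by-round sweep orders ($j=N,\dots,n+1$ for the upper columns and $j=1,\dots,n-1$ for the lower ones) to transfer derivatives by summation by parts, and the same argument that interior boundary terms vanish via equal adjacent columns/rows while the outer boundaries vanish because the functions are zero for sufficiently negative argument. The only cosmetic difference is that the paper transposes the second determinant first so the transfer is column-to-column, and it reduces the constraint $y_n<h$ by conditioning on the exact value of $y_n$; neither changes the substance.
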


\begin{proof}
We may write $w_j(x | (0,m]) = \prod_{k=1}^m (1-a_kb_j) f(x)$ for a function $f :\Z \to \C$ that does not depend on $j$, and likewise
$w_j(x | (m,M]) = \prod_{k=m+1}^M (1-a_kb_j) g(x)$. The functions $f$ and $g$ vanish on the negative integers. The factors of $\prod_{k} (1-a_kb_j)$
can be pulled out of the determinants, and they cancel from both sides of the identity. Moreover, upon conditioning on the value of $y_n$ and transposing the 2nd determinant, it
is then enough to show that
\begin{align*}
&\sum_{\substack {y \in \W_N \\ y_n = h}} \dt{\dr(b_{(i,j]}) f(y_j-x_i)} \dt{\dr(b_{(j,i]}) g(z_i-y_j)} = \\
&\sum_{\substack {y \in \W_N \\ y_n = h}}\dt{\dr(b_{(i,n]}) f(y_j-x_i)} \dt{\dr(b_{(n,i]}) g(z_i-y_j)}.
\end{align*}

Now we can use the summation by parts identity \eqref{eqn:sbp} to move derivatives around.
For column $j > n$, we would like to move its last $j-n$ derivatives ($\dr(b_j), \ldots, \dr(b_{n+1})$)
from the 1st determinant to the 2nd. For column $j < n$, we would like to move its last $n-j$
derivatives from the 2nd determinant to the 1st. In doing so we have to ensure that all boundary
terms from the summation by parts identity are zero. This will be the case so long as the derivatives
are moved in the proper order.

The proper order is to first move the final derivatives, $\dr(b_N), \ldots, \dr(b_{n+1})$, from columns
$N, \ldots, n+1$ of the 1st determinant to the 2nd, whereupon the total derivative along those columns
in the 1st determinant becomes $\dr(b_{(i,j-1]})$ and in the 2nd determinant it becomes $\dr(b_{(j-1,i]})$.
Next, continue to move final derivatives from columns $N$ to $n+1$ of the 1st determinant
to the 2nd, and then again from columns $N$ to $n+2$, and so on for a total of $N-n$ rounds.
After these rounds the total derivative along column $j > n$ of the 1st determinant becomes $\dr(b_{(i,n]})$
and in the 2nd determinant it becomes $\dr(b_{(n,i]})$, as desired.

In order to move the derivatives along the first $n-1$ columns, write $\dr(b_{(j,i]}) g = \dr(b_{(j,n]}) \cdot \dr(b_{(n,i]}) g$.
The derivatives $\dr(b_{(j,n]})$ will be moved from the 2nd determinant to the 1st. Note that these are
indeed derivatives since $j < n$.
First move the leading derivatives, $\dr(b_2), \ldots, \dr(b_n)$, from columns $1, \ldots, n-1$
of the 2nd determinant to the 1st. Then move the new leading derivatives, which are $\dr(b_3), \ldots, \dr(b_n)$,
along columns $1$ through to $n-2$ of the 1st determinant to the 2nd, and continue like this for $n-1$ rounds
to get the desired form.

The boundary terms will be zero during each application of the summation by parts identity.
The reasoning is like in the proof of Lemma \ref{lem:sbp1}. When operating on column $j$
for $1 < j < N$, a boundary term of the form $\dt{\cdot} \dt{\cdot} - \dt{\cdot}\dt{\cdot}$
will be zero because two consecutive columns will be equal in one of the determinants from each pair (recall
the second application of \eqref{eqn:sbp} in the proof of Lemma \ref{lem:sbp1}).
For columns $j = 1$ or $N$, the boundary term will be zero because in one of the terms, $\dt{\cdot} \dt{\cdot}$,
two consecutive columns will be equal while for the other the $j$-th column itself will be zero due to $f$ and $g$
vanishing identically on the negative integers (recall the first application of \eqref{eqn:sbp} in the proof of Lemma \ref{lem:sbp1}).

The lemma is proved by carrying out this routine.
\end{proof}

\begin{lem} \label{lem:C}
	The following identity also holds for $y \in \W_N$.
	$$\sum_{\substack{z \in \W_N \\ z_N < H}} \prod_{j=1}^n b_j^{z_j} \dt{\dr(b_{(n,j]} w_j(z_j-y_i \vt a_{(\ell,m]})} =
		\prod_{j=1}^{n}b_j^{H-1} \dt{\dr(b_{(n,j]} w_j(H-y_i \vt a_{(\ell,m]})}.$$
\end{lem}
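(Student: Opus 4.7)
The plan is to iterate the discrete summation by parts identity \eqref{eqn:sbp} on the variables $z_j$, in the style of Lemma \ref{lem:sbp1}, working outside--in starting from $z_N$ (the only variable directly constrained by $z_N < H$) and then moving through $z_{N-1}, z_{N-2}, \ldots$ in turn. Each sum should collapse to an upper-boundary contribution that substitutes $z_j \mapsto H$, while every interior boundary term vanishes because it forces two adjacent columns of the determinant to become proportional.

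First I would exploit the factorization $w_j(x \vt (\ell,m]) = c_j \cdot W(x \vt (\ell,m])$ with $c_j = \prod_{k=\ell+1}^m (1-a_k b_j)$ and a function $W$ independent of $j$. Pulling $c_j$ out of column $j$ reduces the problem to one in which the matrix entries involve only $W$; moreover $W(x)$ vanishes for $x \leq 0$ (its defining contour can be contracted to infinity), which truncates each $z_j$-sum from below.

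Second, for any column index $j > n$ the operator factors as $\dr(b_{(n,j]}) = \dr(b_j) \circ \dr(b_{(n,j-1]})$, so we may peel off the outer $\dr(b_j)$. Summing $z_N \in [z_{N-1}, H-1]$ against $b_N^{z_N}$ yields via \eqref{eqn:sbp}
$$\sum_{z_N = z_{N-1}}^{H-1} b_N^{z_N}\, \dr(b_N)\, F_N(z_N) \;=\; b_N^{H-1}\, F_N(H) \;-\; b_N^{z_{N-1}-1}\, F_N(z_{N-1}),$$
where $F_N(x) = \dr(b_{(n,N-1]}) W(x - y_i)$ in row $i$. The lower-boundary term places $\dr(b_{(n,N-1]}) W(z_{N-1} - y_i)$ in column $N$; up to the scalar ratio $c_N/c_{N-1}$ this equals column $N-1$, so the determinant vanishes. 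The upper-boundary term contributes the substitution $z_N \mapsto H$ with weight $b_N^{H-1}$. Repeating the argument successively for $z_{N-1}, \ldots, z_{n+1}$ replaces each column $j > n$ by its $z_j \mapsto H$ value.

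Third, for columns $j \leq n$ the operator $\dr(b_{(n,j]}) = \prod_{k=j+1}^n \dr(b_k)^{-1}$ is a product of antiderivatives and has no outer $\dr(b_j)$, so \eqref{eqn:sbp} does not apply directly. I would handle this either by applying the invertible operator $\prod_{k=j+1}^n \dr(b_k)$ to column $j$, converting its structure into the derivative case already handled, or equivalently by running an analogous summation using the defining series \eqref{eqn:drinv} for $\dr(b)^{-1}$. The weight $\prod_{j=1}^n b_j^{z_j}$ on the left is exactly the one needed to produce the telescoping pattern yielding $\prod_{j=1}^n b_j^{H-1}$ on the right; reabsorbing the $c_j$'s into the $w_j$'s then delivers the stated identity.

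The hard part will be the bookkeeping required to ensure every interior boundary term vanishes at each stage. This holds only because the peeled column $j$ becomes $\dr(b_{(n,j-1]}) W$, matching column $j-1$ up to the scalar $c_j/c_{j-1}$, and that alignment demands the outside--in summation order, exactly as in Lemma \ref{lem:sbp1}. The $j \leq n$ case, involving antiderivatives instead of derivatives, is the most delicate portion, and one must verify that the dual argument does not spoil any boundary terms at the other end.
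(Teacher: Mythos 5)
Your overall strategy — iterating the one-variable summation identity from $z_N$ inward, producing the $z_j \mapsto H$ substitution at each stage while the lower boundary term dies because column $j$ becomes proportional to column $j-1$ via the ratio $c_j/c_{j-1}$ — is the same route the paper takes, and your handling of columns $j > n$ is correct. However, you have a genuine gap where you claim that for $j \leq n$ the operator $\dr(b_{(n,j]}) = \prod_{k=j+1}^n \dr(b_k)^{-1}$ ``has no outer $\dr(b_j)$'' and that the identity therefore does not apply directly. In fact the factorization
$$\dr(b_{(n,j]}) = \dr(b_j) \circ \dr(b_{(n,j-1]})$$
holds for \emph{every} $j$, not just $j > n$: by the definition \eqref{eqn:drij}, $\dr(b_{(n,j-1]})$ carries an extra factor of $\dr(b_j)^{-1}$ when $j \leq n$, which cancels the peeled-off $\dr(b_j)$. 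Since the operators commute, one is simply inserting $\dr(b_j)\dr(b_j)^{-1}$ artificially. Thus the single telescoping identity $\sum_{u \leq z < v} b^z \dr(b) g(z) = b^{v-1}g(v) - b^{u-1}g(u)$, with $g = \dr(b_{(n,j-1]}) w_j(\cdot - y_i)$, works uniformly for all $N$ columns, and the argument closes with no case distinction.

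Your proposed workarounds for $j \leq n$ do not rescue the argument as stated. Applying the operator $\prod_{k=j+1}^n \dr(b_k)$ to column $j$ of the determinant is not a determinant-preserving operation (it is a shift-and-scale acting on the underlying function, not a scalar on a column vector, and it does not commute with the summation over $z$ that you still have to perform). Running a ``dual'' summation via \eqref{eqn:drinv} is in principle possible but is unnecessary and would create exactly the boundary headaches you worry about. The clean resolution is the observation above: the derivative you need is already hiding inside the antiderivative chain, so the $j > n$ argument literally repeats for every column from $N$ down to $1$.
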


\begin{proof}
We perform the sums beginning with variable $z_N$ down to $z_1$. The summation over $z_N$ gives
$$ \sum_{z_{N-1} \leq z_N < H} b_N^{z_N} \, \dt{\underbrace{\dr(b_{(n,j)}) w_j(z_j-y_i \vt (\ell,m])}_{j < N}\, \dr(b_{(n,N)}w_N(z_N-y_i \vt (\ell, m]))}.$$
The sum moves into column $N$ with a factor of $b_N^{z_n}$ in front. Write $\dr(b_{(n,N]})w_N(z_N-y_i) = \dr(b_N) f(z_N)$
with $f(z) = \dr(b_{(n,N-1]}) w_N(z-y_i \vt (\ell,m])$. Then the sum to evaluate is
$$\sum_{z_{N-1} \leq z < H} b_N^z \dr(b_N)f(z) = b_N^{H-1}f(H) - b_N^{z_{N-1}-1}f(z_{N-1}).$$
This follows from the general identity that $\sum_{u \leq z < v} b^z \dr(b)g(z) = b^{v-1} g(v) -b^{u-1} g(u)$.

Now observe that $b_N^{z_{N-1}-1}f(z_{N-1})$ is a scalar multiple of the $(N-1)$-th column of the determinant.
Indeed, $w_N(z) = \prod_{\ell < k \leq m} \frac{1-a_kb_N}{1-a_kb_{N-1}}\, w_{N-1}(z)$, and so
$$b_N^{z_{N-1}-1}f(z_{N-1}) = \lambda \, \dr(b_{(n,N-1]})w_{N-1}(z_{N-1}-y_i)$$
for $\lambda = b_N^{z_{N-1}-1}\prod_k \frac{1-a_kb_N}{1-a_kb_{N-1}}$.
So this term does not affect the determinant and, following the sum over $z_N$, we find that
\begin{align*}
& \sum_{\substack{z \in \W_N \\ z_N < H}} \prod_{j=1}^n b_j^{z_j} \dt{\dr(b_{(n,j]} w_j(z_j-y_i \vt a_{(\ell,m]})} = \\
&b_N^{H-1} \sum_{\substack{z \in \W_{N-1}\\ z_{N-1 < H}}} \prod_j b_j^{z_j} \, \dt{\underbrace{\dr(b_{(n,j)}) w_j(z_j-y_i \vt (\ell,m])}_{j < N}\, \dr(b_{(n,N-1)}w_N(H-y_i \vt (\ell, m]))}.
\end{align*}
The lemma follows by iterating the above procedure for the remaining columns.
\end{proof}

\subsubsection{Proof of Theorem \ref{thm:twotime}}
By Theorem \ref{thm:1}, the two-time distribution function $$\pr{\Gb(m,n) < h, \Gb(M,N) < H \vt \vec{\Gb}(0)=x}$$ equals
\begin{align*}
& \sum_{\substack{y \in \W_N \\ y_n < h}} \sum_{\substack{z \in \W_N \\ z_N < H}} \pr{\vec{\Gb}(m)=y \vt \vec{\Gb}(0)=x} \pr{\vec{\Gb}(M)=z \vt \vec{\Gb}(m)=y} \\
& = \sum_{\substack{y \in \W_N \\ y_n < h}} \sum_{\substack{z \in \W_N \\ z_N < H}}
\prod_{j=1}^N b_j^{z_j-x_j}\, \dt{\dr(b_{(i,j]}) \cdot w_j(y_j-x_i \vt (0,m])} \dt{\dr(b_{(i,j]}) \cdot w_j(z_j-y_i \vt (m,M])}
\end{align*}
We have used Lemma \ref{lem:A} to rewrite the transition matrix of $\vec{\Gb}$ in terms of the operators $\dr(b)$ and functions $w_j$.
Lemma \ref{lem:B} combined with Lemma \ref{lem:C} then leads to the following expression for this sum:
\begin{equation} \label{eqn:twotime}
\prod_{j=1}^N b_j^{H-1-x_j} \sum_{\substack{y \in \W_N \\ y_n < h}} \dt{\dr(b_{(i,n]}) \cdot w_j(y_j-x_i \vt (0,m])} \dt{\dr(b_{(n,j-1]}) \cdot w_j(H-y_i \vt (m,M])}.
\end{equation}

We can express the sum over $y \in \W_N$ with the constraint that $y_n < h$ as a contour integral
involving the unconstrained sum. It works as follows. For $y \in \W_N$, the constraint that
$y_n < h$ is the same as $\# \{j: y_j < h\} \geq n$. We may then write, for $s > 1$,
\begin{align*}
\ind{\# \{j: y_j < h\} \geq n} & = \frac{1}{2 \pi \mathbold{i}} \oint \limits_{|\theta|=s} \frac{\theta^{\# \{j: y_j < h\}-n}}{\theta-1} \\
& = \frac{1}{2 \pi \mathbold{i}} \oint \limits_{|\theta|=s} \frac{\prod_{j=1}^N \theta^{\ind{y_j < h}}}{\theta^n(\theta -1)}.
\end{align*}
The identity can be seen by expanding $(\theta-1)^{-1}$ is powers of $\theta^{-1}$ and then
interchanging summation with integration. It now follows that
\begin{align*}
&\eqref{eqn:twotime} = \frac{1}{2 \pi \mathbold{i}} \oint \limits_{|\theta|=s} \frac{S}{\theta^n(\theta -1)}, \; \text{where}\\
& S = \sum_{y \in \W_N} \prod_{j=1}^N b_j^{H-1-x_j} \theta^{\ind{y_j<h}}
\dt{\dr(b_{(i,n]}) w_j(y_j-x_i | (0,m])} \dt{\dr(b_{(n,j-1]}) w_j(H-y_i | (m,M])}.
\end{align*}
The proof of Theorem \ref{thm:twotime} is complete once we show that $S$ equals $\dt{\theta L_1 - L_2}$.
Indeed, we can insert $\theta^{-n}$ into the determinant by plugging a factor of $\theta^{-1}$ into the
first $n$ rows, whereupon we find that
$$\theta^{-n}S = \dt{\theta^{-\ind{i \leq n}} (\theta L_1 - L_2)} = \dt{\theta^{\ind{i>n}}L_1 - \theta^{-\ind{i\leq n}}L_2}.$$
So, we are finished after proving the following lemma.

\begin{lem}
The sum $S$ equals $\dt{\theta L_1 - L_2}$ where $L_1$ and $L_2$ are as given by Theorem \ref{thm:twotime}.
\end{lem}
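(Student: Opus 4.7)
My plan parallels the proof of Proposition \ref{prop:1}. The objective is to convert the sum over $y \in \W_N$ in $S$ into a single determinant via the Andréief (Cauchy-Binet) identity, and then to evaluate the resulting one-dimensional $y$-sum as a double contour integral by summing geometric series. To set this up I would transpose the second determinant (its value is unchanged) so that both determinants have the $y$-variable indexing the columns. Using the integral representation from Lemma \ref{lem:A}, the first entry factors as $\dr(b_{(i,n]}) w_j(y_j - x_i \vt (0,m]) = \prod_{k=1}^m(1-a_kb_j)\,\phi_i(y_j)$ with
\[
\phi_i(y) = \frac{1}{2\pi\mathbold{i}}\oint_{|z|=R_1}dz\,z^{y - x_i - 1}\frac{\prod_{k=1}^n(z-1/b_k)}{\prod_{k=1}^i(z-1/b_k)}\prod_{k=1}^m\frac{1}{1-a_k/z},
\]
depending on the column only through $y_j$. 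An analogous factoring of the transposed second entry yields $\prod_{k=m+1}^M(1-a_kb_i)\,\psi_i(y_j)$ for a corresponding $\psi_i(y)$ with contour $|w|=R_2$.

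Next, absorb $\theta^{\ind{y_j<h}}$ as a column multiplier into the first determinant. Both determinants now have entries of the form $\chi_i(y_j)$ and thus vanish whenever two $y$'s coincide, so the sum over $\W_N$ reduces to a sum over strictly ordered tuples. The Andréief identity then gives
\[
\sum_{y\in\W_N}\dt{\theta^{\ind{y_j<h}}\phi_i(y_j)}\dt{\psi_i(y_j)} = \dt{\sum_{y\in\Z}\theta^{\ind{y<h}}\phi_i(y)\psi_j(y)}.
\]
To compute the right-hand entry I would substitute the contour integrals: the $y$-dependence of $\phi_i(y)\psi_j(y)$ appears only as $(z/w)^y$, and splitting $\sum_y\theta^{\ind{y<h}}(z/w)^y$ into the geometric series on $y<h$ (convergent for $|z|>|w|$, evaluating to $z^h/(w^{h-1}(z-w))$) and on $y\geq h$ (convergent for $|z|<|w|$, evaluating to $z^h/(w^{h-1}(w-z))$), and placing the contours as $R_1>R_2$ and $R_1<R_2$ respectively -- exactly the orderings defining $L_1$ and $L_2$ in Theorem \ref{thm:twotime} -- the entry becomes $\theta\,\widetilde L_1(i,j) - \widetilde L_2(i,j)$. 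Here $\widetilde L_{1,2}$ have the integrand of $L_{1,2}$ stripped of certain row and column scalars.

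The main bookkeeping obstacle is matching those scalars. The above procedure leaves column factors $b_j^{H-1-x_j}\prod_{k=1}^m(1-a_kb_j)$ and row factors $\prod_{k=m+1}^M(1-a_kb_i)$, whereas $L_1, L_2$ in Theorem \ref{thm:twotime} naturally carry row factors $b_i^{h-1-x_i}\prod_{k=1}^m(1-a_kb_i)$ and column factors $b_j^{H-h}\prod_{k=m+1}^M(1-a_kb_j)$. These two assignments differ entry-wise, but since $\det(c_id_j M_{ij}) = \prod_i c_i\prod_j d_j \det M$ only the total scalar matters, and a direct multiplication shows that both factorizations yield the same total $\prod_{\ell=1}^N b_\ell^{H-1-x_\ell}\prod_{k=1}^M(1-a_kb_\ell)$. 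This identifies $S$ with $\dt{\theta L_1 - L_2}$ and completes the proof.
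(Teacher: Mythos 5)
Your proposal is correct and takes essentially the same route as the paper's proof: pull the $(1-a_kb_j)$ and $b_j$ factors out as scalars, apply the Cauchy--Binet (Andr\'eief) identity to collapse the sum over $\W_N$ into a single determinant, represent the inner sum over $y\in\Z$ via the contour-integral forms of the entries so that the $y$-dependence is isolated as $(z/w)^y$, and split the geometric series at $y<h$ versus $y\geq h$ to produce the two contour orderings $R_1>R_2$ and $R_1<R_2$ that define $L_1$ and $L_2$. Your observation that the row/column scalars can be distributed differently than in the theorem's formula for $L_1,L_2$ but that only the total product $\prod_\ell b_\ell^{H-1-x_\ell}\prod_{k=1}^M(1-a_kb_\ell)$ matters is a valid and clean way to reconcile the bookkeeping; the paper instead re-inserts the product $Z$ by splitting it as $b_\ell^{h-1-x_\ell}\prod_{k\leq m}(1-a_kb_\ell)\cdot b_\ell^{H-h}\prod_{k>m}(1-a_kb_\ell)$ and pushing each piece into the appropriate row or column, arriving at the same conclusion.
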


\begin{proof}
The sum can be evaluated with the Cauchy-Binet identity. First, write $w_j(x |\, (\ell,m]) = \prod_{k=\ell+1}^m (1-a_kb_j) w(x |\, (\ell,m])$
by pulling out factors of $(1-a_kb_j)$ from $a_j(z| (\ell,m])$ so that $w(x)$ no longer depends on $j$. Then,
$$S = Z \sum_{y \in \W_N} \theta^{\ind{y_j<h}} \dt{\dr(b_{(i,n]}) w(y_j-x_i | (0,m])} \dt{\dr(b_{(n,j-1]}) w(H-y_i | (m,M])}$$
where $Z = \prod_{j=1}^N b_j^{H-1-x_j} \prod_{k=1}^M(1-a_kb_j)$. We will put $Z$ back into the determinant later.

The sum, apart from $Z$, now looks like $\sum_{y \in \W_N} \dt{f(i, y_j)} \dt{g(y_i,j)}$ where
\begin{align*}
& f(i,y) = \dr(b_{(i,n]}) w(y-x_i \vt (0,m]) \, \theta^{\ind{y<h}},\quad g(y,j)  = \dr(b_{(n,j-1]}) w(H-y \vt (m,M]).
\end{align*}
The Cauchy-Binet identity implies that
\begin{align} \nonumber
& \sum_{y \in \W_N} \dt{f(i, y_j)} \dt{g(y_i,j)}  = \dt{\sum_{y \in \Z} f(i,y) g(y,j)} \\
\label{eqn:CB} & = \; \dt{\sum_{y \in \Z} \theta^{\ind{y<0}} \, \dr(b_{(i,n]}) w(y+h-x_i \vt (0,m]) \dr(b_{(n,j-1]}) w(H-h-y \vt (m,M])}.
\end{align}

Looking at \eqref{eqn:CB}, we see that the matrix inside the determinant has the form $\theta \widetilde{L}_1(i,j) + \widetilde{L}_2(i,j)$
where $\widetilde{L}_1$ is obtained from the summation over $y < 0$ and $\widetilde{L}_2$ over $y \geq 0$. We can
insert $Z$ back into the determinant by breaking up each term
$$ b_{\ell}^{H-1-x_{\ell}} \prod_{k=1}^M (1 - a_kb_{\ell}) =
b^{h-1-x_{\ell}} \prod_{k \leq m} (1-a_kb_{\ell}) \, \times \, b_{\ell}^{H-h} \prod_{k > m}(1- a_k b_{\ell}),$$
and putting $b_j^{H-h} \prod_{k>m} (1-a_k b_j)$ into column $j$ while $b_i^{h-1-x_i} \prod_{k \leq m}(1-a_kb_i)$ into
row $i$ of $\widehat{L}_1$ and $\widehat{L}_2$. Comparing the resulting matrices with $L_1$ and $L_2$,
it suffices to show the following to conclude the proof.
\begin{align} \label{eqn:tildeL}
\widehat{L}_1(i,j) & = \frac{1}{(2\pi \mathbold{i})^2} \oint \limits_{|z|=R_1} dz \oint \limits_{|w|=R_2} dw\, \\ \nonumber
& \frac{z^{h-1-x_i}w^{H-h}}{z-w} \frac{\prod_{k=1}^n (z-1/b_k)}{\prod_{k=1}^i(z-1/b_k)} \frac{\prod_{k=1}^{j-1} (w-1/b_k)}{\prod_{k=1}^n(w-1/b_k)} a(z \vt (0,m]) a(w \vt (m,M]),
\end{align}
where $R_1 > R_2 > \max_k \{1/b_k\}$ and $a(z \vt (\ell,m]) = \prod_{k=\ell+1}^m(1-a_k/z)^{-1}$.
Matrix $\widehat{L}_2$ is the same except it has a minus sign in front and the contours are arranged such that $R_2 > R_1$.

This representation of $\widetilde{L}_k$ is proven by using Lemma \ref{lem:A} to express $\dr(b_{(i,j]}) w(x \vt (\ell,m])$
as a contour integral:
\begin{align*}
\dr(b_{(i,n]}) w(y+h-x_i \vt (0,m]) &= \intz{R_1} dz\, z^{y+h-1-x_i} \frac{\prod_{k=1}^n (z-1/b_k)}{\prod_{k=1}^i(z-1/b_k)} a(z \vt (0,m]) \\
\dr(b_{(n,j-1]}) w(H-h-y \vt (m,M]) &= \frac{1}{2\pi \mathbold{i}} \oint \limits_{|w|=R_2} dw\, w^{H-h-1-y} \frac{\prod_{k=1}^{j-1} (w-1/b_k)}{\prod_{k=1}^n(w-1/b_k)} a(w \vt (m,M]).
\end{align*}
Their product then equals
\begin{align*}
&  \frac{1}{(2\pi \mathbold{i})^2} \oint \limits_{|z|=R_1} dz \oint \limits_{|w|=R_2} dw\, (z/w)^y \, \times \\
 & z^{h-1-x_i}\, w^{H-h-1} \, \frac{\prod_{k=1}^n (z-1/b_k)}{\prod_{k=1}^i(z-1/b_k)} \frac{\prod_{k=1}^{j-1} (w-1/b_k)}{\prod_{k=1}^n(w-1/b_k)}  \, a(z \vt (0,m]) \, a(w \vt (m,M]).
 \end{align*}
 The summation over $y < 0$ and $y \geq 0$ may be interchanged with integration to give
\begin{description}
 	\item[For $\widetilde{L}_1$: ] $\sum_{y < 0} (z/w)^y = \frac{w}{z-w}$ provided that $|w/z| = R_2/R_1 < 1$;
 	\item[For $\widetilde{L}_2$:] $\sum_{y \geq 0} (z/w)^y = - \frac{w}{z-w}$ provided that $|z/w| = R_1/R_2 < 1$.
\end{description}
Performing these sums results in the representation \eqref{eqn:tildeL} for $\widetilde{L}_k$.
\end{proof}

\subsubsection{Orthogonalization and a Fredholm determinant} \label{sec:6}
We will explain a general method to express the determinant in the formula for the two-time distribution of $\Gb$ 
in terms of a Fredholm determinant. It takes a simple form when $\vec{\Gb}(0) = 0$, which is presented in
Theorem \ref{thm:4}. It will be used for doing asymptotics in the following sections.

\paragraph{\textbf{A general Fredholm determinant form.}}
Consider the determinant $\dt{\theta^{-\ind{i \leq n}}(\theta L_1 - L_2)}$ from Theorem \ref{thm:twotime}.
We will see that the matrix $L_1$ can be expressed as
$$L_1(i,j) = \ind{i \leq n, \,  j \leq n} F_b(i,j) + (\text{4 other matrices})$$
where $F_b$ is a lower triangular matrix with 1s on the diagonal (in fact, the same $F_b$ from $\S$\ref{sec:fredholm}).
The matrix $L_2$ can similarly be decomposed as
$$- L_2(i,j) = \ind{i > n,\, j> n} F_b(i,j) + (\text{4 other matrices}).$$
This leads to the not-so-obvious fact that $\theta^{-\ind{i \leq n}}(\theta L_1 - L_2)$ equals
$$ (\ind{i, j \leq n} + \ind{i,j > n})F_b + (\text{many other matrices}),$$
and the matrix $T = (\ind{i, j \leq n} + \ind{i,j > n})F_b$ is lower triangular with 1s on its diagonal.
It can then be taken out of the determinant to get that
$$\dt{\theta^{-\ind{i \leq n}}(\theta L_1 - L_2)} = \dt{I + F(\theta)}$$
for some matrix $F(\theta)$.

The matrix $T$ in fact has a $ 2 \times 2$ block triangular form
$$ T = \begin{bmatrix}
T_u & 0 \\
0 & T_{\ell}
\end{bmatrix}$$
where $T_u$ and $T_{\ell}$ are the upper and lower blocks of $F_b$ according to
the partition $[N] = (0,n] \cup (n,N]$ of the row and columns. The inverting out of $T$ can then
be done by left multiplication by the matrix
$$A = \begin{bmatrix}
	T_u^{-1} & 0 \\
	0 & I
\end{bmatrix}$$
and right multiplication by
$$ B = \begin{bmatrix}
I & 0 \\
0 & T_{\ell}^{-1}
\end{bmatrix},$$
both of which have determinant 1. In this way one finds that
$$\dt{\theta^{-\ind{i \leq n}}(\theta L_1 - L_2)} = \dt{\theta^{-\ind{i \leq n}}(\theta (AL_1B) - (AL_2B))},$$
with $AL_1B = \ind{i=j, \, i\leq n} + F_1$ and $-AL_2B = \ind{i=j, \, i > n} + F_2$. Then, crucially,
$$\theta^{-\ind{i \leq n}}(\theta \ind{i=j,\, i \leq n} + \ind{i=j,\, i>n}) = I$$
and $F(\theta) = \theta^{\ind{i>n}}F_1 + \theta^{-\ind{i \leq n}} F_2$.

To express $L_1$ as above one proceeds in the following manner; the procedure for $L_2$
is much the same. Decompose the contour $\{ |z| = R_1\} = \gamma_a \cup \gamma_b$
where $\gamma_b$ contains only the poles at $z = 1/b_k$ and $\gamma_a$ contains
the complementary poles at $z = a_k$. Similarly, break up $\{|w| = R_2\} = \gamma'_a \cup \gamma'_b$. 
The condition $R_1 > R_2$ means that $\gamma_k$ contains $\gamma'_k$ on the inside.
Due to this decomposition of contours,
$$L_1 = F_{b,b} + F_{b,a} + F_{a,b} + F_{a,a}$$
where
\begin{align*}
F_{b,b}(i,j) &= \frac{1}{(2 \pi \mathbold{i})^2} \oint \limits_{\gamma_b} dz \oint \limits_{\gamma'_b} dw \,
(b_iz)^{h-1} (b_jw)^{H-h} (b_i z)^{-x_i}\, a_i(z \vt (0,m]) \, a_j(w \vt (m,M]) \times \\
& \frac{1}{z-w} \, \frac{\prod_{k=1}^n(z-1/b_k)}{\prod_{k=1}^i(z-1/b_k)} \frac{\prod_{k=1}^{j-1}(w-1/b_k)}{\prod_{k=1}^n(w-1/b_k)},
\end{align*}
and the rest are similar with respect to the remaining contours.

The $\gamma'_b$ contour can be contracted to a point when $j > n$ because then there are no $w$-poles. So,
$$F_{b,b}(i,j) = \ind{j \leq n} F_{b,b}(i,j) = \ind{i \leq n,\, j \leq n} F_{b,b}(i,j) + \ind{i > n,\, j \leq n} F_{b,b}(i,j).$$
When $i \leq n$, the $\gamma_b$ contour can also be contracted to a point but doing so incurs a residue
at $z=w$ due to the ordering of the contours. The residue there is precisely $F_b(i,j)$, so that
$$F_{b,b}(i,j) = \ind{i,j \leq n} F_b(i,j) + \ind{i > n,\, j \leq n} F_{b,b}(i,j).$$
This is the decomposition we wanted, which then leads to a Fredholm determinant.

\paragraph{\textbf{Orthogonalization when the initial condition is zero.}}
Consider the two-time distribution when $\vec{\Gb}(0) = 0$, for which we perform the orthogonalization explicitly.
We first observe a symmetry relating
$L_1$ and $L_2$ that allows to consider the orthogonalization only for $L_1$.
Write $L_1(i,j)$ in the following suggestive form:
\begin{align*}
L_1(i,j) &= \frac{1}{(2 \pi \mathbold{i})^2} \oint \limits_{|z|=R_1} dz \oint \limits_{|w|=R_2} dw\,
\frac{(b_iz)^{h-1} (b_jw)^{H-h}}{(z-w)} \frac{\prod_{k \in [n]}(z-1/b_k)}{\prod_{k \in [i]}(z-1/b_k)} \frac{\prod_{k \in [j-1]}(w-1/b_k)}{\prod_{k \in [n]}(w-1/b_k)} \\
& \times \prod_{k \in [m]} \frac{1-a_kb_i}{1-a_k/z} \prod_{k \in [M]\setminus [m]} \frac{1-a_kb_j}{1-a_k/w}.
\end{align*}
Here $[n]$ denotes the set $\{1, 2, \ldots, n\}$ of integers.

In the above, $L_1$ depends on the parameters $i,j, n, m, h, N, M, H$, and apart from $h$ and $H$,
the dependence of the other parameters is through the subsets $[i], [j-1], [n], [m], [N]$ and $[M]$.
Thus, $L_1(i,j) = L_1([i], [j-1], [n], [N], [m], [M], h-1, H)$. Now $L_2$ can be expressed in terms of
$L_1$ but with a different set of parameters, namely
\begin{equation} \label{eqn:L2L1}
-L_2(i,j) = L_1([N] \setminus [j-1], [N] \setminus [i], [N]\setminus [n], [N], [M] \setminus [m], [M], H-h, H).
\end{equation}
This follows from exchanging the contour variables $z \leftrightarrow w$ in the integral for $L_2$
and then substituting the complementary parameters.

What this means is that if we find matrices $A$ and $B$ so that $AL_1B(i,j) = \ind{i=j, \, i \leq n} + F_1(i,j)$,
then it will automatically be the case that $- AL_2 B(i,j) =  \ind{i=j, \, i > n} + F_2$ with
$F_2$ related to $F_1$ according to \eqref{eqn:L2L1}. The indicator $\ind{i=j, \, i \leq n}$ should be
read as $\ind{\# [i] = \# [j-1] + 1, \, \# [i] \leq \# [n]}$, so that after substituting $[N] \setminus [j-1]$
for $[i]$, $[N] \setminus [i]$ for $[j-1]$ and $[N] \setminus [n]$ for $[n]$, it turns to $\ind{i=j, \, j > n}$
as needed.

Define the orthogonalizing matrices $A$ and $B$ by
\begin{align} \label{eqn:matA}
A(i,j) &= \frac{1}{2 \pi \mathbold{i}} \oint_{\gamma_b} d \zeta \,
\frac{\prod_{k \in [j-1]} (\zeta - 1/b_k)}{\prod_{k \in [i]} (\zeta - 1/b_k)} (b_j \zeta)^{1-h} \prod_{k \in [m]} \frac{1-a_k/\zeta}{1-a_k b_j}\\
\label{eqn:matB}
B(i,j) &= \frac{1}{2 \pi \mathbold{i}} \oint_{\gamma_b} d \omega \,
\frac{\prod_{k \in [j-1]} (\omega - 1/b_k)}{\prod_{k \in [i]} (\omega - 1/b_k)} (b_i \omega)^{h-H}
\prod_{k \in [M]\setminus [m]} \frac{1-a_k/\omega}{1-a_k b_i}.
\end{align}
The contour $\gamma_b$ encloses all the poles at $1/b_k$.

Observe $A(i,i) = B(i,i) = 1$ by a residue calculation with a simple pole at $1/b_i$.
Observe also that $A(i,j) = B(i,j) = 0$ when $j > i$ because the contours lack poles and may be contracted to a point.
So $A$ and $B$ are lower triangular with ones on the diagonal, as required for orthogonalization.

\begin{thm} \label{thm:4}
The two-time distribution function when $\vec{\Gb}(0)=0$ is given by
$$\pr{G(m,n) \leq h, G(M,N) \leq H} = \frac{1}{2 \pi \mathbold{i}} \oint \limits_{|\theta| = s}
\frac{\dt{I + \theta^{\ind{i > n}} F_1 + \theta^{-\ind{i \leq n}} F_2}}{\theta - 1}.$$
The matrix $F_2$ is related to $F_1$ by \eqref{eqn:L2L1}.
These matrices are sums, $F_1 = J_1 - J_2 + J_3$ and $F_2 = J_2 - J_1 - J_4$, with the $J$s given by the following formulas.

For $z \in \C$, $h \in \Z$ and subsets $S \subset [N]$ and $T \subset [M]$, define
\begin{equation} \label{eqn:GST}
	G(z \vt S, T, h) = z^{h} \prod_{k \in S} (z - 1/b_k) \prod_{k \in T} (1 - a_k/z)^{-1}.
\end{equation}
\begin{align*}
J_1(i,j) &= \ind{j \leq n} \, \frac{1}{(2 \pi \mathbold{i})^2}\oint_{\gamma_b} d \zeta \, \oint_{\Gamma_a} dz \
\frac{G (z \vt [j-1], [m], h-1)}{G (\zeta \vt [i], [m], h-1 )\, (z-\zeta)}\\
J_2(i,j) & = \ind{i > n} \, \frac{1}{(2 \pi \mathbold{i})^2} \oint_{\gamma_b} d\omega \, \oint_{\Gamma_a} dw \,
\frac{G(w \vt [N] \setminus [i], [M] \setminus [m], H-h)}{G(\omega \vt [N]\setminus [j-1], [M] \setminus [m], H-h) \, (w-\omega)}\\
J_3(i,j) & = \frac{1}{(2 \pi \mathbold{i})^4} \oint_{\gamma_b} d\zeta \, \oint_{\gamma_b} d \omega \, \oint_{\Gamma_a} dz\,  \oint_{\Gamma'_a} dw  \\
& \frac{G(z \vt [n],[m], h-1) G(w \vt [N]\setminus [n], [M] \setminus [m], H-h)}
{G(\zeta \vt [i],[m], h) \, G(\omega \vt [N]\setminus [j-1], [M] \setminus [m], H-h)\,(z-\zeta)(w-\omega)(z-w)}
\end{align*}
The contour $\gamma_b$ encloses only the poles at every $1/b_k$. The contours $\Gamma_a$ and $\Gamma'_a$ enclose only
the poles at every $a_k$. In $J_3$, $\Gamma_a$ contains $\Gamma'_a$ (so $|z| > |w|$).

The matrix $J_4$ looks the same as $J_3$ except the $z$ and $w$ contours are reversed
so that $\Gamma'_a$ contains $\Gamma_a$ (so $|w| > |z|$).
\end{thm}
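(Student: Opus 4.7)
The plan is to carry out the orthogonalization scheme sketched in Section \ref{sec:6}. By the symmetry identity \eqref{eqn:L2L1}, which writes $-L_2$ as $L_1$ under substitution of the complementary parameters, it suffices to verify
\begin{equation*}
AL_1B(i,j) = \ind{i=j,\; i \le n} + F_1(i,j), \qquad F_1 = J_1 - J_2 + J_3.
\end{equation*}
The companion identity $-AL_2B(i,j) = \ind{i=j,\; i > n} + F_2(i,j)$ with $F_2 = J_2 - J_1 - J_4$ then follows by pushing the same substitution through each $J_k$: the indicator $\ind{j \le n} = \ind{\#[j-1] < \#[n]}$ in $J_1$ becomes $\ind{i > n}$, so $J_1$ and $J_2$ exchange roles, while the $|z|>|w|$ version of the four-fold integral maps to the $|w|>|z|$ one, turning $J_3$ into $J_4$ (with the sign rearrangement absorbed by the minus sign of $-L_2$). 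Once $AL_1B$ and $-AL_2B$ are known, a direct diagonal computation confirms that $\theta^{\ind{i>n}}(AL_1B) + \theta^{-\ind{i \le n}}(-AL_2B) = I + \theta^{\ind{i>n}}F_1 + \theta^{-\ind{i\le n}}F_2$, yielding the Fredholm form.

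To establish the identity for $AL_1B$, I will decompose $L_1$ as $F_{bb} + F_{ba} + F_{ab} + F_{aa}$ using the contour splits $\{|z|=R_1\} = \gamma_a \cup \gamma_b$ and $\{|w|=R_2\} = \gamma'_a \cup \gamma'_b$. Each summand has a restricted index support obtained by contracting the $\gamma_b$ or $\gamma'_b$ contour to a point when its integrand is holomorphic there; for $F_{bb}$ this is worked out in Section \ref{sec:6} and gives $F_{bb}(i,j) = \ind{i,j \le n}F_b(i,j) + \ind{i > n,\, j \le n}F_{bb}(i,j)$, and analogous arguments constrain the three other pieces. The matrix $A$ of \eqref{eqn:matA} is tailored so that its $\zeta$-integrand is reciprocal, up to the factor $(z-\zeta)^{-1}$, to the $z$-integrand of $L_1$ restricted to $\gamma_b$; consequently the sum $\sum_k A(i,k) F_{b\cdot}(k,j)$ can be evaluated by contracting the $\zeta$-contour through the simple pole at $\zeta = z$. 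The residue there produces exactly $\ind{i=j,\, i \le n}$ from the diagonal part of $F_b$ (when $i \le n$), plus a remainder that still carries one free $z$-contour. Right multiplication by $B$ performs the mirror operation on the $w$-contour.

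Assembled over the four pieces, the contributions are: $F_{aa}$, after multiplication by $A$ and $B$, survives as the full four-fold integral $J_3$ with contour ordering $|z| > |w|$ inherited from $R_1 > R_2$; the mixed pieces $F_{ab}$ and $F_{ba}$, together with the orthogonalizing residues supplied by $A$ and $B$, each collapse one contour and contribute $J_1$ and $-J_2$ respectively, the indicators $\ind{j \le n}$ and $\ind{i > n}$ arising from the contractibility constraints on $\gamma'_b$ and $\gamma_b$; the remaining $F_{bb}$ part not absorbed into the $\delta$-piece combines with these to complete $F_1$. The main obstacle is bookkeeping: tracking signs from residues when $\gamma_b$ is deformed past $\gamma'_b$ (the source of the $-J_2$ sign), keeping the four support indicators consistent, and finally repackaging all contour integrals into the compact $G(z \vt S, T, h)$ form of \eqref{eqn:GST}. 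Once these residue computations are organized systematically and the contour orderings recorded, the match with the stated $J_1, J_2, J_3, J_4$ is an algebraic verification.
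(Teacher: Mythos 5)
Your overall strategy — orthogonalize by left/right multiplication with $A$ and $B$, exploit the symmetry \eqref{eqn:L2L1} to reduce to $AL_1B$, and then decompose the $z$- and $w$-contours into the $\gamma_a$/$\gamma_b$ pieces — is the same as the paper's. Reversing the order (split $L_1$ first, then multiply) is fine by linearity and wouldn't change the content. However, there is a real gap in the central computational step, and one structural misconception.

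The gap: you never explain how the matrix products $\sum_{r,s} A(i,r)\,(\cdots)(r,s)\,B(s,j)$ are actually evaluated. You write that $A$'s $\zeta$-integrand is ``reciprocal, up to the factor $(z-\zeta)^{-1}$'' to the relevant $z$-integrand and that the sum ``can be evaluated by contracting the $\zeta$-contour through the simple pole at $\zeta=z$.'' But the summand $\sum_r A(i,r)\cdots$ has no pole at $\zeta=z$ before the sum is carried out; the factor $(z-\zeta)^{-1}$ is produced only after one first evaluates the $r$-sum in closed form. The mechanism is the telescoping identity
\begin{equation*}
\sum_{r=1}^{N} \frac{\prod_{k=1}^{r-1}(\zeta - 1/b_k)}{\prod_{k=1}^{r}(z-1/b_k)}
= \frac{1}{z-\zeta}\left(1 - \prod_{k=1}^{N}\frac{\zeta-1/b_k}{z-1/b_k}\right),
\end{equation*}
and its $\omega$/$w$ counterpart. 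Multiplying these two telescoped sums gives four terms, and one must argue that three of them vanish (because after cancellation the $\zeta$- or $\omega$-contour has no poles and can be contracted). Only then does $AL_1B$ (or each $AF_{xy}B$ in your ordering) take the four-fold integral form \eqref{eqn:ALB} with the $(z-\zeta)^{-1}(w-\omega)^{-1}$ factors, from which the residue arguments can start. This telescoping step is the core of the proof and is absent from your write-up.

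The misconception: you say ``the remaining $F_{bb}$ part not absorbed into the $\delta$-piece combines with these to complete $F_1$.'' There is no such remainder. After the telescoping and the contour split, the $bb$ piece $J_{b,b}(i,j)$ collapses exactly to $\ind{i=j,\, i\le n}$; this requires a sequence of contour contractions (contract the $w$-contour at $w=\omega$; note the integral vanishes unless $j\le n$; contract the $z$-contour at $z=\zeta$ and $z=\omega$, discard a vanishing term; for $i\le n$ contract $\zeta$ at $\zeta=\omega$; the final integral is $\ind{i=j}$), not a single residue at $\zeta=z$. All of $F_1$ comes from the other three pieces: $J_{a,b}=J_1$, $J_{b,a}=-J_2$, and $J_{a,a}=J_3$, each obtained by a contraction that collapses one or none of the remaining contours. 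You should supply the telescoping identity and the case-by-case contour analysis for each of the four pieces before the proof can be considered complete.
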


\begin{proof}
Following the discussion above, it is enough to show that $AL_1B = \ind{i=j,\, i \leq n} + F_1$.
Multiplying $L_1$ by $A$ and $B$ and simplifying gives the following:
\begin{align*}
	AL_1B(i,j) &= \frac{1}{(2 \pi \mathbold{i})^4} \oint_{\gamma_b} d\zeta \, \oint_{\gamma_b} d \omega \, \oint_{|z|=R_1} dz \, \oint_{|w|=R_2}dw\, 
	\frac{(z/\zeta)^{h-1} (w/\omega)^{H-h}}{(z-w)} \\
	\times & \frac{\prod_{k \in [n]}(z - 1/b_k) \prod_{k \in [j-1]}(\omega - 1/b_k)}{\prod_{k \in [n]}(w-1/b_k) \prod_{k \in [i]}(\zeta - 1/b_k)}
	\prod_{k \in [m]}\frac{1-a_k/\zeta}{1-a_k/z} \prod_{k \in [M]\setminus [m]} \frac{1-a_k/\omega}{1-a_k/w}\\
	\times & \left [ \sum_{r,s=1}^{N} \frac{\prod_{k=1}^{r-1}(\zeta - 1/b_k)}{\prod_{k=1}^r (z-1/b_k)} \cdot
	\frac{\prod_{k=1}^{s-1}(w - 1/b_k)}{\prod_{k=1}^r (\omega-1/b_k)} \right ].
\end{align*}

In order to evaluate the double sum, observe that
$$ \frac{\prod_{k=1}^{\ell-1}(x- c_k)}{\prod_{k=1}^{\ell}(y - c_k)} =
\frac{1}{y-x} \left [ \prod_{k=1}^{\ell-1} \frac{x-c_k}{y-c_k} - \prod_{k=1}^{\ell} \frac{x-c_k}{y-c_k}\right ].$$
So the double sum telescopes to
$$\frac{1}{(z-\zeta)(w-\omega)}
\left ( 1 - \prod_{k=1}^N \frac{\zeta - 1/b_k}{z - 1/b_k}\right ) \left ( \prod_{k=1}^N \frac{w-1/b_k}{\omega - 1/b_k}-1\right).$$

Multiply the product above into 4 terms and plug them into the integral defining $AL_1B(i,j)$ above.
This turns the integral into 4 integrals, and the only non-zero one is the integral with the term
$1 \times \prod_{k=1}^N \frac{w-1/b_k}{\omega - 1/b_k}$. The other integrals are zero because
either the $\zeta$-contour or the $\omega$-contour can be contracted due to not having poles.

Recalling the $G$-function \eqref{eqn:GST}, $AL_1B(i,j)$ then equals
\begin{align} \label{eqn:ALB}
	AL_1B(i,j) &= \frac{1}{(2 \pi \mathbold{i})^4} \oint_{\gamma_b} d\zeta \, \oint_{\gamma_b} d \omega \, \oint_{|z|=R_1} dz \, \oint_{|w|=R_2}dw
	\\ \nonumber
	& \frac{G(z \vt[n], [m], h-1 ) G(w \vt [N] \setminus [n], [M] \setminus [m], H-h)}
	{G(\zeta \vt[i], [m], h-1 ) G(\omega \vt [N] \setminus [j-1], [M] \setminus [m], H-h)\, (z-w)(z-\zeta)(w-\omega)}.
\end{align}

Break up the contour $\{|z|=R_1\}$ into $\Gamma_a \cup \Gamma_b$ where $\Gamma_a$ encloses on the poles at $z = a_k$
and $\Gamma_b$ only the poles at $z = 1/b_k$. The contour $\Gamma_b$ should also contain the $\zeta$-contour $\gamma_b$.
Do the same for $\{|w|=R_2\}$ into $\Gamma'_a \cup \Gamma'_b$.
The condition $R_1 > R_2$ stipulates $\Gamma_a$ contains $\Gamma'_a$ and $\Gamma_b$ contains $\Gamma'_b$.
With this decomposition,
$$AL_1B = J_{b,b} + J_{a,b} + J_{b,a} + J_{a,a}$$
where $J_{x,y}(i,j)$ is the integral \eqref{eqn:ALB} but with the $z$-integral over $\Gamma_x$ and the $w$-integral over $\Gamma'_y$.

It suffices to prove that $J_{b,b}(i,j) = \ind{i=j, i \leq n}$, $J_{a,b} = J_1$, $J_{b,a} = - J_2$ and $J_{a,a} = J_3$.
Observe that $J_{a,a}$ equals $J_3$ by definition and \eqref{eqn:ALB}.

\paragraph{\textbf{Proof that $J_{b,b}(i,j) = \ind{i=j,\, i \leq n}$}.}
In $J_{b,b}(i,j)$ all contours are around the poles $1/b_k$.
Arrange the contour so that the $\zeta$-contour contains the $\omega$-contour.
So the ordering of the contours makes $|z| > |w| > |\zeta| > |\omega|$.

Contract the $w$-contour with residue at $w = \omega$. Then,
\begin{align*}
	J_{b,b}(i,j) = \frac{1}{(2 \pi \mathbold{i})^3}\oint_{\gamma_b} d \zeta \oint_{\gamma_b} d\omega \oint_{\Gamma_b} dz \,
	\frac{G(z \vt [n], [m], h-1)}{G(\zeta \vt [i], [m], h-1) (z-\omega)(z-\zeta)}
	\frac{\prod_{k=n+1}^N (\omega - 1/b_k)}{\prod_{k=j}^N (\omega - 1/b_k)}.
\end{align*}
When $j > n$ there is no $\omega$-pole and the integral is zero. So assume $j \leq n$.

Contract the $z$-contour with residues at $z = \zeta$ and then $z = \omega$. This gives $J_{b,b}(i,j) = (I) + (II)$ with
\begin{align*}
(I) &= \ind{j \leq n} \frac{1}{(2 \pi \mathbold{i})^2}\oint_{\gamma_b} d \zeta \oint_{\gamma_b} d\omega \, 
\frac{\prod_{k=1}^n (\zeta - 1/b_k)}{\prod_{k=1}^i (\zeta - 1/b_k) \prod_{k=j}^n(\omega - 1/b_k)\, (\zeta - \omega)} \\
(II) & = \ind{j \leq n} \frac{1}{(2 \pi \mathbold{i})^2}\oint_{\gamma_b} d \zeta \oint_{\gamma_b} d\omega \, 
\frac{ G(\omega \vt [j-1], [m], h-1)}{G(\zeta \vt [i], [m], h-1)\, (\omega - \zeta)}
\end{align*}
Term (II) is zero because the $\omega$-contour can be contracted.
Term (I) is zero when $i > n$ because the $\zeta$-contour can be contracted to $\infty$.
For $i \leq n$, recalling that $|\zeta| > |\omega|$, contract the $\zeta$-contour with residue at $\zeta = \omega$.
So,
\begin{align*}
	J_{b,b}(i,j) &= \ind{i,j \leq n} \frac{1}{(2 \pi \mathbold{i})} \oint_{\gamma_b} d\omega \,
	\frac{\prod_{k=i+1}^n (\omega - 1/b_k)}{\prod_{k=j}^n (\omega - 1/b_k)}. 
\end{align*}

In the integral above, when $i=j$, $J_{b,b}(i,i) = \ind{i \leq n}$ as there is a simple pole at $1/b_i$ with residue 1.
When $i < j$ the integral is zero because the contour can be contracted to 0. When $i > j$ the integral is also
zero as the contour can be contracted to $\infty$. This shows $J_{b,b}(i,j) = \ind{i=j, i \leq n}$.

\paragraph{\textbf{Proof that $J_{a,b} = J_1 $}.} 
Contract the $w$-contour with a residue at $w = \omega$, which gives
\begin{align*}
	J_{a,b}(i,j) &= \frac{1}{(2 \pi \mathbold{i})^3} \oint_{\gamma_b} d \zeta\, \oint_{\gamma_b} d \omega \oint_{\Gamma_a} dz\, \\
	& \frac{G(z \vt [n], [m], h-1)}{(z - \zeta)(z-\omega) \, G(\zeta \vt [i], [m], h-1)}
	\frac{\prod_{k=n+1}^{N} (\omega - 1/b_k)}{\prod_{k=j}^N (\omega - 1/b_k)}.
\end{align*}
The $\omega$-contour can be contracted if $j > n$. So assume $j \leq n$ and contract the $\omega$-contour to $\infty$.
This incurs a residue at $\omega = z$, resulting in $J_{a,b} = J_1$.

\paragraph{\textbf{Proof that $J_{b,a} = - J_2 $}.}
Contract the $z$-contour with a residue at $z = \zeta$, which gives
\begin{align*}
J_{b,a}(i,j) &= \frac{1}{(2 \pi \mathbold{i})^3} \oint_{\gamma_b} d \zeta\, \oint_{\gamma_b} d \omega \oint_{\Gamma'_a} dw\, \\
& \frac{G(w \vt [N]\setminus [n], [M]\setminus [m], H-h)}{(\zeta - w)(w-\omega) \, G(\omega \vt [N]\setminus [j-1], [M]\setminus [m], H-h)}
\frac{\prod_{k=1}^n (\zeta - 1/b_k)}{\prod_{k=1}^i (\zeta - 1/b_k)}.
\end{align*}
The $\zeta$-contour can be contracted if $i \leq n$. So assume $i > n$ and contract the $\zeta$-contour to $\infty$. 
This incurs a residue at $\zeta = w$, to give $J_{b,a} = -J_2$.
\end{proof}

\subsection{Exponential last passage percolation} \label{sec:5}
The discrete polynuclear growth model becomes the exponential last passage percolation model under suitable re-scaling of the weights $\omega_{i,j}$.
For $\eps > 0$, write $a_i = 1 - \eps \alpha_i$ and $b_j = 1- \eps \beta_j$ for a new set of parameters $\alpha_i, \beta_j \geq 0$.
The random variable $\eps w_{i,j}$ converges in distribution as $\eps$ tends to zero to an exponential random variable of rate $\alpha_i + \beta_j$.
In this limit we find analogues of the previous formulas for the exponential model. These are stated in the following.

Consider independent exponential weights $\omega_{i,j} \sim \mathrm{Exp}(\alpha_i + \beta_j)$ (rate $\alpha_i + \beta_j$) with
$\alpha_i + \beta_j > 0$. Let $\Gb$ be the growth function defined by \eqref{eqn:G} in terms of
these exponential weights $\omega_{i,j}$. Define $\vec{\Gb}$ as in \eqref{eqn:Gvec}, which now takes values in
$$\W_N(\R) = \{ z \in \R^N: z_1 \leq z_2 \leq \cdots \leq z_N \}.$$

\begin{cor} \label{cor:exp1}
	For $x,y \in \W_N(\R)$, the transition density matrix of $\vec{\Gb}$ is given by
	$$\pr{\vec{\Gb}(m) \in dy \vt \vec{\Gb}(0) = x} = \dt{M_{\mathrm{Exp}}(i,j \vt x,y)}_{i,j} dy$$
	where
	$$M_{\mathrm{Exp}}(i,j \vt x,y) = \intz{R} dz\, e^{y_j(z-\beta_j) - x_i (z-\beta_i)} \frac{\prod_{k=1}^j (z-\beta_k)}{\prod_{k=1}^i (z-\beta_k)} \prod_{k=1}^m \frac{\alpha_k+\beta_j}{z+\alpha_k}$$
	and $R > \max_{k,\ell} \{\alpha_k, \beta_{\ell} \}$.
\end{cor}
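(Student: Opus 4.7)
The plan is to derive Corollary \ref{cor:exp1} as an $\eps \to 0$ limit of Theorem \ref{thm:1}, using the parametrization $a_i = 1 - \eps \alpha_i$, $b_j = 1 - \eps \beta_j$, and discretizing the continuous target values by $x_i \rightsquigarrow \lfloor x_i / \eps \rfloor$, $y_j \rightsquigarrow \lfloor y_j/\eps \rfloor$. Under this scaling, $\eps \omega_{i,j} \Rightarrow \mathrm{Exp}(\alpha_i + \beta_j)$, so the geometric growth function $\eps \Gb$ converges in distribution to the exponential one. The task is then to show that the geometric transition probability, suitably rescaled, converges pointwise to the density claimed, and to identify the limit as the determinant $\dt{M_{\mathrm{Exp}}}$.

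First I would change variables $z = 1 + \eps \zeta$ in the contour integral defining $M(i,j\vt x, y)$ from Theorem \ref{thm:1}. Expanding each factor to leading order yields:
\begin{align*}
(zb_j)^{y_j/\eps} &\longrightarrow e^{y_j(\zeta - \beta_j)}, &
(zb_i)^{-x_i/\eps} &\longrightarrow e^{-x_i(\zeta - \beta_i)}, \\
z - 1/b_k &= \eps(\zeta - \beta_k) + O(\eps^2), &
\frac{1-a_kb_j}{1-a_k/z} &\longrightarrow \frac{\alpha_k + \beta_j}{\zeta + \alpha_k},
\end{align*}
with $dz/z = \eps\, d\zeta + O(\eps^2)$. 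Accumulating $\eps$-powers in the entry $M(i,j\vt x,y)$, the numerator contributes $\eps^j$ from the $(z - 1/b_k)$ factors, the denominator $\eps^{-i}$, and the measure an extra $\eps$, giving $\eps^{j - i + 1}$ per entry. In the determinant over any permutation $\sigma$, these telescope to $\prod_i \eps^{\sigma(i) - i + 1} = \eps^N$, which is precisely the Jacobian converting the discrete probability $\pr{\vec{\Gb}(m) = y}$ on the $\eps\Z^N$-lattice into a density on $\R^N$. Relabelling $\zeta$ as $z$ in the limit and noting that the contour $|z| = R$ must encircle all the poles at $z = \beta_k$ (limits of $1/b_k$) and $z = -\alpha_k$ (limits of $a_k$) delivers the stated integrand and the constraint $R > \max_{k,\ell}\{\alpha_k, \beta_\ell\}$.

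The main obstacle is upgrading these pointwise asymptotics into a genuine convergence statement for the transition density. The cleanest route is a two-step argument: (i) standard weak convergence of $\eps \Gb$ to the exponential growth function, which shows that the discrete probability mass assigned by the determinant in Theorem \ref{thm:1} to cells of volume $\eps^N$ in $\W_N$ converges to the distribution of the exponential last-passage process; (ii) a dominated convergence argument on the contour integrals showing that, uniformly for $(x,y)$ in compact subsets of $\W_N(\R)$, the rescaled determinant converges to $\dt{M_{\mathrm{Exp}}(i,j \vt x,y)}$. The latter requires a uniform bound on the integrand along $|\zeta| = R$, which is routine because each factor admits an explicit $\eps$-uniform envelope on the compact contour once $\eps$ is small; then the limiting density must agree with the weak limit, identifying it as the transition density of the exponential model.

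Finally, a small bookkeeping step: since the original discretization uses floors, the error $x_i - \eps\lfloor x_i/\eps\rfloor = O(\eps)$ contributes only a multiplicative $e^{O(\eps)}$ to exponential factors like $e^{y_j(\zeta - \beta_j)}$, which vanishes in the limit. One also checks that the contours can be taken $\eps$-independent under the change of variable, so that ordering relative to the rescaled poles is preserved throughout. With these technicalities addressed, the formula for $M_{\mathrm{Exp}}$ follows directly.
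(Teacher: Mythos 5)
Your proposal is correct and follows exactly the route the paper takes: parametrize $a_i = 1-\eps\alpha_i$, $b_j = 1-\eps\beta_j$, discretize $x,y$ by $\lfloor \cdot/\eps\rfloor$, change variables $z\mapsto 1+\eps\zeta$ in the contour integral from Theorem~\ref{thm:1}, and track the factors of $\eps$ (which telescope to $\eps^N$ inside the determinant, giving the Jacobian from lattice probability to density). The paper states this only as a one-sentence sketch; your version adds the correct bookkeeping of the $\eps^{j-i+1}$ per entry and the dominated-convergence/weak-convergence step needed to make the limit rigorous, but the method is the same.
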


In order to obtain this corollary from Theorem \ref{thm:1} one writes $a_i = 1 - \eps \alpha_i$ and  $b_j = 1 - \eps \beta_j$,
sets $x^{\eps}_i = \lfloor \eps^{-1}x_i \rfloor $ and $y_i^{\eps} = \lfloor \eps^{-1} y_i \rfloor$, and then considers the limit as
$\eps \to 0$ of $M(i,j \vt x^{\eps}, y^{\eps})$. The limit is obtained by changing variable $z \mapsto 1 + \eps z$ in
the contour integral defining $M(i,j \vt x^{\eps},y^{\eps})$. The following corollary is obtained in the same manner.

\begin{cor} \label{cor:exp2}
	Given $x \in \W_N(\R)$ and $0=n_0 < n_1 < n_2 < \cdots < n_p = N$, the distribution function
	$$\pr{G(m,n_1) \leq h_1, \ldots, G(m,n_p) \leq h_p \vt \vec{\Gb}(0) = x} = \dt{F_{\mathrm{Exp}}(i,j \vt x)}_{i,j}$$
	where the matrix
	$$ F_{\mathrm{Exp}}(i,j \vt x) = \intz{R} dz\, e^{h(j)(z-\beta_j) - x_i(z-\beta_i)} \frac{\prod_{k=1}^{j-1} (z-\beta_k)}{\prod_{k=1}^i (z-\beta_k)} \prod_{k=1}^m \frac{\alpha_k+\beta_j}{z+\alpha_k}.$$
	Here $h(j) =h_k$ if $j \in (n_{k-1}, n_k]$ and $R > \max_{k,j} \{\alpha_k, \beta_j \}$.
\end{cor}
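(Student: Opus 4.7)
The plan is to obtain Corollary \ref{cor:exp2} from Theorem \ref{thm:2} by the same $\varepsilon \to 0$ degeneration used to derive Corollary \ref{cor:exp1}. First I would set $a_k = 1 - \varepsilon \alpha_k$ and $b_j = 1 - \varepsilon \beta_j$, which makes the geometric weights $\omega_{i,j} \sim \mathrm{Geom}(a_i b_j)$ satisfy $\varepsilon \omega_{i,j} \xrightarrow{d} \mathrm{Exp}(\alpha_i + \beta_j)$ as $\varepsilon \to 0$. On the left-hand side of Theorem \ref{thm:2}, I would replace the integer arguments by $x_i^{\varepsilon} = \lfloor \varepsilon^{-1} x_i \rfloor$ and $h_k^{\varepsilon} = \lfloor \varepsilon^{-1} h_k \rfloor$, so that the event $\{\Gb(m, n_k) < h_k^{\varepsilon}\}$ translates in the scaling limit into $\{G^{\text{exp}}(m,n_k) \leq h_k\}$ for the exponential model. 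The goal is then to show that $\det [F(i,j \vt x^{\varepsilon})]$ converges to $\det [F_{\text{Exp}}(i,j \vt x)]$ as $\varepsilon \to 0$.

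Next I would perform the change of variables $z \mapsto 1 + \varepsilon z$ inside the contour integral defining $F(i,j \vt x^{\varepsilon})$. The relevant asymptotics are
\begin{align*}
z - 1/b_k &= \varepsilon(z - \beta_k) + O(\varepsilon^2), \qquad zb_j = 1 + \varepsilon(z - \beta_j) + O(\varepsilon^2),\\
1 - a_k b_j &= \varepsilon(\alpha_k + \beta_j) + O(\varepsilon^2), \qquad 1 - a_k/z = \varepsilon(z + \alpha_k) + O(\varepsilon^2),
\end{align*}
so that $(zb_j)^{h^{\varepsilon}(j)-1} \to e^{h(j)(z-\beta_j)}$ and $(zb_i)^{-x_i^{\varepsilon}} \to e^{-x_i(z-\beta_i)}$. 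Combined with $dz \mapsto \varepsilon\, dz$, this yields an overall prefactor $\varepsilon^{j-i}$ in front of the converging integrand, whose limit is precisely $F_{\text{Exp}}(i,j \vt x)$. The factors $\varepsilon^{j-i}$ form a conjugation by diagonal matrices $\mathrm{diag}(\varepsilon^{-i})$ and $\mathrm{diag}(\varepsilon^{j})$, whose determinants cancel, so the determinant is preserved.

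The main things to verify are that (i) the contour $\{|z| = R\}$ with $R > \max\{1/b_k\}$ deforms, after the change of variables, to a contour $\{|z| = R'\}$ with $R' > \max\{\alpha_k, \beta_j\}$ as required in the statement (this follows because the only relevant singularities are at $z = \beta_k$ and $z = -\alpha_k$), and (ii) the pointwise convergence of the integrands upgrades to convergence of the contour integrals. The latter is a routine dominated-convergence argument since on the deformed contour the integrand is uniformly bounded by an integrable function independent of $\varepsilon$. Finally, weak convergence of the left-hand side of Theorem \ref{thm:2} under the scaling $\varepsilon \Gb \to G^{\text{exp}}$, together with the continuity of the right-hand side as a determinant of convergent matrix entries, delivers Corollary \ref{cor:exp2}.

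The most delicate step is the dominated-convergence bound on the deformed contour, since one needs uniformity as $\varepsilon \to 0$ in factors of the form $(1 + \varepsilon(z - \beta_j))^{h^{\varepsilon}(j)}$; this is standard and amounts to choosing the contour radius strictly larger than $\max\{\alpha_k, \beta_j\}$ so that the logarithm expansions are controlled. Everything else is bookkeeping of powers of $\varepsilon$ and diagonal conjugation.
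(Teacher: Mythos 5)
Your proposal is correct and follows essentially the same route the paper uses: the authors derive Corollary~\ref{cor:exp2} from Theorem~\ref{thm:2} by the identical $\varepsilon$-degeneration ($a_i = 1-\varepsilon\alpha_i$, $b_j = 1-\varepsilon\beta_j$, $x_i^\varepsilon = \lfloor\varepsilon^{-1}x_i\rfloor$, $h^\varepsilon = \lfloor\varepsilon^{-1}h\rfloor$), followed by the change of variables $z \mapsto 1+\varepsilon z$, with the $\varepsilon^{j-i}$ prefactor absorbed as a diagonal conjugation. Your expansions of $z-1/b_k$, $zb_j$, $1-a_kb_j$, $1-a_k/z$ and the bookkeeping of powers of $\varepsilon$ all check out.
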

A Fredholm determinant formula when $x =0$ may be obtained from Proposition \ref{prop:spatialFredholm}.

Finally, the following expresses the two-time distribution in exponential last passage percolation as a corollary of Theorem \ref{thm:twotime}.
A Fredholm determinant formula when $x=0$ can be derived from Theorem \ref{thm:4}.
\begin{cor} \label{cor:exp3}
	The two-time distribution function of the exponential last passage percolation model is given by
	$$ \pr{\Gb(m,n) < h, \Gb(M,N) < H \vt \vec{\Gb}(0)=x} = \frac{1}{2 \pi \mathbold{i}} \oint \limits_{|\theta| = r} d \theta\,
	\frac{\dt{\theta^{\ind{i > n}}L^{\rm{Exp}}_1 - \theta^{-\ind{i \leq n}}L^{\rm{Exp}}_2}}{\theta -1}$$
	where the radius $r > 1$ and $L^{\rm{Exp}}_1, L^{\rm{Exp}}_2$ are the following $N \times N$ matrices.
	\begin{align*}
	L^{\rm{Exp}}_1(i,j) &= \frac{1}{(2 \pi \mathbold{i})^2} \oint \limits_{|z|=R_1} dz \oint \limits_{|w|=R_2} dw\,
	\frac{e^{(z-\beta_i)h + (w-\beta_j)(H-h) - x_i(z-\beta_i)}}{(z-w)} \\
	& \times  \frac{\prod_{k=1}^n(z-\beta_k)}{\prod_{k=1}^i(z-\beta_k)} \frac{\prod_{k=1}^{j-1}(w-\beta_k)}{\prod_{k=1}^n(w-\beta_k)}
	\prod_{k=1}^m \frac{\alpha_k+\beta_i}{z+\alpha_k} \prod_{k= m+1}^M \frac{\alpha_k+\beta_j}{w+\alpha_k}.
	\end{align*}
	The contours are arranged so that $R_1 > R_2 > \max_{k,j} \{\alpha_k, \beta_j\}$.
	The matrix $L^{\rm{Exp}}_2$ looks the same except the ordering of contours is reversed to $R_2 > R_1 > \max_{k,j} \{\alpha_k, \beta_j\}$.
\end{cor}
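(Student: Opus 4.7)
The plan is to derive Corollary \ref{cor:exp3} from Theorem \ref{thm:twotime} by the same $\eps \downarrow 0$ scaling limit used in the paragraph between Corollaries \ref{cor:exp1} and \ref{cor:exp2}. Write $a_k = 1 - \eps \alpha_k$, $b_j = 1 - \eps \beta_j$, and replace $x_i, h, H$ by the integer truncations $x_i^\eps = \lfloor \eps^{-1} x_i \rfloor$, $h^\eps = \lfloor \eps^{-1} h \rfloor$, $H^\eps = \lfloor \eps^{-1} H \rfloor$. Under this parametrization $\eps \omega_{i,j}$ converges in distribution to $\mathrm{Exp}(\alpha_i + \beta_j)$, whence $\eps \Gb$ converges jointly to the exponential last passage time $\Gb^{\mathrm{Exp}}$. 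Because the limiting two-time law is continuous, the left-hand side of the formula in Theorem \ref{thm:twotime} converges to the two-time distribution function of Corollary \ref{cor:exp3}.

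For the right-hand side, the outer $\theta$-contour is unaffected, so only $L_1$ and $L_2$ require analysis. Inside the double contour integrals defining these matrices I would change variables $z \mapsto 1 + \eps z$ and $w \mapsto 1 + \eps w$, and move the contours to $|z| = R_1^{\mathrm{new}} > |w| = R_2^{\mathrm{new}} > \max_{k,j}\{\alpha_k, \beta_j\}$ for $L_1$ (and reversed for $L_2$). Using
$$b_i z = 1 + \eps(z - \beta_i) + O(\eps^2), \qquad z - 1/b_k = \eps(z - \beta_k) + O(\eps^2),$$
$$\frac{1 - a_k b_i}{1 - a_k/z} \to \frac{\alpha_k + \beta_i}{z + \alpha_k}, \qquad \frac{1}{z - w} \to \frac{1}{\eps(z - w)},$$
together with $(b_i z)^{h^\eps - 1} \to e^{h(z - \beta_i)}$, $(b_j w)^{H^\eps - h^\eps} \to e^{(H-h)(w - \beta_j)}$, and $(b_i z)^{-x_i^\eps} \to e^{-x_i(z - \beta_i)}$, the integrand of $L_k(i,j)$ converges to the integrand of $L_k^{\mathrm{Exp}}(i,j)$ up to an overall power $\eps^{j-i}$ coming from the simple factors $z - 1/b_k$, $w - 1/b_k$, the factor $(z-w)^{-1}$, and the two Jacobians. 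This prefactor is cosmetic: conjugating by the diagonal matrix $\mathrm{diag}(\eps^k)$ absorbs it without changing the determinant.

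The main technical step is uniform control to justify interchanging the $\eps \to 0$ limit with the contour integrals and the determinantal expansion. Since all contours live on compact sets and can be chosen uniformly in small $\eps$ to keep a positive distance from the shifted singularities (at $1 - \eps \alpha_k$ and $1 + \eps \beta_k + O(\eps^2)$), the integrands are uniformly bounded and dominated convergence applies. The only potential obstacle is pole bookkeeping: one must check that the relative ordering $R_1 > R_2$ (resp.\ $R_2 > R_1$) distinguishing $L_1$ from $L_2$ is preserved under the substitution and that no singularity of the pre-limit integrand crosses the chosen contours as $\eps$ shrinks. Once this is verified, assembling the entrywise pointwise limits under the $\theta$-integral yields Corollary \ref{cor:exp3}.
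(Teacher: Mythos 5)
Your proposal is correct and takes essentially the same route as the paper: the paper obtains Corollary \ref{cor:exp3} by the same $\eps\downarrow 0$ degeneration of Theorem \ref{thm:twotime} that it explicitly sketches for Corollary \ref{cor:exp1} (substitution $a_k = 1-\eps\alpha_k$, $b_j = 1-\eps\beta_j$, integer truncation of $x_i,h,H$, change of variables $z\mapsto 1+\eps z$, $w\mapsto 1+\eps w$), and you have simply spelled out the details. Your bookkeeping of the cosmetic prefactor $\eps^{j-i}$ (from the $b$-products, the $(z-w)^{-1}$, and the two Jacobians) and the observation that it is a determinant-preserving conjugation are both accurate, as is the remark that the ordering $R_1\gtrless R_2$ must be checked to survive the substitution.
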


\section{Asymptotics and scaling limits} \label{partIII}

\subsection{Baik-Ben\,Arous-P\'ech\'e distribution at two times} \label{sec:7}

Consider the two-time distribution of inhomogeneous model with every $b_j = 1$, $a_1, \ldots, a_r$ being variable, and every $a_i = q$ for $i > r$.
The matrix $F(\theta)$ from Theorem \ref{thm:4} is a finite rank perturbation of the corresponding matrix for the
homogeneous model where every $a_i = q$. We are interested in the two-time distribution of this instance in the KPZ-scaling limit, which is the following.
Its single time scaling limit, known as the Baik-Ben\,Arous-P\'ech\'e distribution, has been studied in \cite{BBP}.

Define constants $c_0, c_1, \ldots, c_4$ by
\begin{align}\label{scalingconstants}
& c_0=q^{-\frac{1}{3}}(1+\sqrt{q})^{\frac{1}{3}},\quad c_1=q^{-\frac{1}{6}}(1+\sqrt{q})^{\frac{2}{3}},\quad c_2=\frac{2\sqrt{q}}{1-\sqrt{q}},\\ \nonumber
& c_3=\frac{q^{\frac{1}{6}}(1+\sqrt{q})^{\frac{1}{3}}}{1-\sqrt{q}}, \quad c_4 = \frac{q^{1/3}(1-\sqrt{q})}{(1+\sqrt{q})^{1/3}}.
\end{align}

For a large parameter $T$, write $n,N, m, M, h$ and $H$ according to the following scaling. Consider
temporal parameters $0 < t_1 < t_2$, spatial parameters $x_1, x_2 \in \R$ and $\xi_1, \xi_2 \in \R$.
For a choice of these, set (ignoring rounding)
\begin{align} \label{kpzscaling}
& n = t_1 T - c_1 x_1 (t_1T)^{2/3} & N = t_2 T - c_1 x_2(t_2T)^{2/3} \\ \nonumber
& m = t_1 T + c_1 x_1 (t_1T)^{2/3} & M = t_2 T + c_1 x_2(t_2T)^{2/3} \\ \nonumber
& h = c_2 (t_1 T) + c_3 \xi_1(t_1T)^{1/3} & H = c_2 (t_2 T) + c_3 \xi_2(t_2 T)^{1/3} .
\end{align}

Introduce the notation $\D n = N-n$, $\D m = M-m$ and $\D h = H-h$. If we set
\begin{equation} \label{eqn:deltas}
\D t = t_2 - t_1, \quad \D x = (\frac{t_2}{\D t})^{2/3} x_2 - (\frac{t_1}{\D t})^{2/3}x_1, \quad \D \xi = (\frac{t_2}{\D t})^{1/3} \xi_2 - (\frac{t_1}{\D t})^{1/3}\xi_1,
\end{equation}
then it holds that $\D n = \D t T - c_1 \D x(\D tT)^{2/3}$ and likewise for $\D m$ and $\D h$.

The parameters $a_1, \ldots, a_r$ are scaled according to
\begin{equation} \label{ascaling}
a_k = \sqrt{q} - \frac{c_4}{T^{1/3}}\cdot \lambda_k \quad \text{with}\; \lambda_k > 0.
\end{equation}
The $\lambda_k$ are parameters. Assume that $r < m$ and that $r$ remains fixed, independently of $T$.

We want to consider the large $T$ limit of the two-time distribution under this scaling of the parameters.
The limit is represented by a contour integral of a Fredholm determinant over $L^2(\R)$.
We will build up to it in the coming sections.

\subsubsection{Statement of the limit theorem}
Define the function
\begin{equation} \label{eqn:Glimit}
\G(z \vt t, x, \xi) = \exp \left \{ \frac{t}{3}z^3 + t^{2/3}x z^2 - t^{1/3} \xi z \right \}
\end{equation}
for $z \in \C$, $t > 0$ and $x, \xi \in \R$.

Let $d_1, d_2, D_1$ and $D_2$ be positive real numbers such that
$$ D_1, D_2 < \min \, \{\lambda_1, \lambda_2, \ldots, \lambda_r \}, $$
where every $\lambda_k > 0$ (the same as in \eqref{ascaling}).
Denote by $\Re(z) = d$ the vertical line crossing the real axis at $d$ and oriented upwards.
Let $\mu$ be a sufficiently large scalar that will be used in a conjugation factor.

Define kernels $\J_{1}, \J_{2}, \J_{3,<}$ and $\J_{3,>}$ over $L^2(\R)$ as follows.
They depend implicitly on the $\lambda_k$s.
\begin{align} \label{eqn:Js}
& \J_{1}(u,v) = e^{\mu(v-u)} \, \ind{v \leq 0}\,  \frac{1}{(2 \pi \mathbold{i})^2}
\oint \limits_{\Re(\zeta) = -d_1} d \zeta \oint \limits_{\Re(z) = D_1} dz\, 
\frac{\G(z \vt t_1, x_1, \xi_1) e^{zv-\zeta u}}{\G(\zeta \vt t_1, x_1, \xi_1) (z-\zeta)}
\prod_{k=1}^r \frac{\lambda_k-\zeta}{\lambda_k-z} \\ \nonumber
& \J_{2}(u,v) = e^{\mu(v-u)} \, \ind{u > 0}\,  \frac{1}{(2 \pi \mathbold{i})^2}
\oint \limits_{\Re(\omega) = -d_1} d \omega \oint \limits_{\Re(w) = D_1} dw\, 
\frac{\G(w \vt \D t, \D x, \D \xi) e^{\omega v - wu}}{\G(\omega \vt \D t, \D x, \D \xi) (w-\omega)} \\ \nonumber
& \J_{3, s}(u,v) =  \; e^{\mu(v-u)} \, \frac{1}{(2 \pi \mathbold{i})^4}
\oint \limits_{\Re(\zeta) = -d_1} d \zeta \oint \limits_{\Re(\omega) = -d_2} d \omega 
\oint \limits_{\Re(z) = D_1} dz  \oint \limits_{\Re(w) = D_2} dw \\ \nonumber
& \qquad \frac{\G(z \vt t_1, x_1, \xi_1) \, \G(w \vt \D t, \D x, \D \xi)}{\G(\zeta \vt t_1, x_1, \xi_1) \, \G(\omega \vt \D t, \D x, \D \xi)} \cdot
\frac{e^{\omega v - \zeta u} }{(z-\zeta)(w-\omega)(z-w)} \prod_{k=1}^r \frac{\lambda_k - \zeta}{\lambda_k - z}.
\end{align}
If $s$ equals $<$ then $D_1 < D_2$, that is, the $z$-contour is to the left of the $w$-contour. If $s$ equals $>$ then $D_1 > D_2$,
so that the ordering of the contours is reversed.

The kernels are of trace class if $\mu$ is sufficiently large in terms of $x_1,x_2,t_1$ and $t_2$ because then their absolute
values are bounded by terms of the form $e^{-\mu' u} \Ai(-u) e^{\mu' v} \Ai(v)$ where $\Ai()$ is the Airy function.

The following theorem will be proved by doing a saddle point analysis of the matrices from Theorem \ref{thm:4}.
\begin{thm} \label{thm:5}
	Consider the two-time distribution $\pr{G(m,n) < h, G(M,N) < H}$ for the inhomogeneous growth model
	where every $b_j = 1$, $a_i = q$ for $i > r$ and $a_1, \ldots, a_r$ are according to \eqref{ascaling}.
	Assume that $n,m,h,N,M,H$ are given by \eqref{kpzscaling}. Then in the limit at $T$ tends to infinity,
	the two time distribution functions converges to
	$$ \frac{1}{2 \pi \mathbold{i}} \oint \limits_{|\theta| = r} \frac{\dt{I + F_{\lambda}(\theta)}_{L^2(\R)}}{\theta - 1}$$
	where $r > 1$ and
	$$F_{\lambda}(\theta)(u,v) = \theta^{\ind{u > 0}} F_{1, \lambda}(u,v) + \theta^{- \ind{u \leq 0}} F_{2, \lambda}(u,v).$$
	The kernels $F_{1, \lambda}$ and $F_{2, \lambda}$ are given by
	\begin{equation*}
	F_{1, \lambda} = \J_{2} - \J_{1}  + \J_{3,<} \quad \text{and} \quad F_{2, \lambda} = \J_{1} - \J_{2} - \J_{3,>}\,.
	\end{equation*}
\end{thm}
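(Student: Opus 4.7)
The plan is to start from the Fredholm determinant of Theorem~\ref{thm:4}, which applies because $\vec{\Gb}(0) = 0$, and perform a saddle-point asymptotic analysis on the four kernels $J_1, J_2, J_3, J_4$ of that theorem. Rescaling the row and column indices $i, j$ converts the discrete trace on $\ell^2([N])$ to the $L^2(\R)$-trace, and the discrete indicators $\ind{j \leq n}$ and $\ind{i > n}$ become $\ind{v \leq 0}$ and $\ind{u > 0}$ matching $\J_1$ and $\J_2$. The contour ordering $R_1 > R_2$ in $J_3$ rescales, as I explain below, to $D_1 < D_2$ yielding $\J_{3,<}$, while the reversed ordering in $J_4$ yields $\J_{3,>}$; hence $F_1 \to \J_2 - \J_1 + \J_{3,<} = F_{1,\lambda}$ and symmetrically $F_2 \to F_{2,\lambda}$.

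Concretely, with $b_j \equiv 1$ and $a_i = q$ for $i > r$, the function $G$ from \eqref{eqn:GST} factorizes as
\[
G(z \vt S, T, h) = z^h (z-1)^{|S|} (1 - q/z)^{-(|T|-r)} \prod_{k=1}^r (1-a_k/z)^{-1},
\]
valid for $T \supset [r]$, which holds since $r < m$. Inserting \eqref{kpzscaling} for $h, |S|, |T|$ exhibits the homogeneous part of each integrand as $\exp(T \Phi(z))$ for an explicit phase $\Phi$, and a direct computation shows $\Phi'(\sqrt q) = \Phi''(\sqrt q) = 0$ with $\Phi'''(\sqrt q) \neq 0$; the constants $c_0, \ldots, c_4$ of \eqref{scalingconstants} are calibrated precisely for this cubic behaviour.

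Next I would rescale all contour variables via $\zeta = \sqrt q - c_4 T^{-1/3}\tilde\zeta$, $z = \sqrt q - c_4 T^{-1/3}\tilde z$ (and similarly for $\omega, w$), and the matrix indices via $i = n + c_0 u (t_1 T)^{1/3}$ together with the parallel substitution for $j$. The cubic Taylor expansion of $T\Phi$ around $\sqrt q$ produces exactly $\log \G(\tilde z \vt t_1, x_1, \xi_1)$ from the first-time factors, while the second-time factors with parameters $(M-m, N-n, H-h)$ produce $\log \G(\tilde w \vt \D t, \D x, \D \xi)$ as defined by \eqref{eqn:deltas}. The index-dependent factors $(\zeta/z)^i$ from the ratio of $G$ functions contribute $e^{\tilde z v - \tilde\zeta u}$, the Jacobian $c_0 (t_1 T)^{1/3}$ from the conversion $\sum_i \to \int du$ is absorbed into the rescaled kernel, and the perturbation factors $(1-a_k/\zeta)/(1-a_k/z)$ with $a_k$ as in \eqref{ascaling} converge to $(\lambda_k - \tilde\zeta)/(\lambda_k - \tilde z)$. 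The conjugation $e^{\mu(v-u)}$, which does not affect the Fredholm determinant, appears after shifting the vertical saddle contours by $\pm \mu$; choosing $\mu$ large enough renders each limit kernel trace class on $L^2(\R)$, bounded by $e^{-\mu u} \Ai(-u) \cdot e^{\mu v} \Ai(v)$ uniformly in $T$.

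The main obstacle is upgrading this pointwise kernel convergence to trace-norm convergence, which is needed to pass the limit inside the Fredholm determinant and then inside the $\theta$-integral over $\{|\theta| = r\}$. This requires uniform Gaussian tail estimates on the prelimit integrands along steep-descent contours and a truncation in the $u, v$ variables. The constraint $D_1, D_2 < \min_k \lambda_k$ keeps the rescaled $\tilde z$- and $\tilde w$-contours strictly to the left of the poles at $\lambda_k$, so the finite rank perturbation does not disturb the bounds available in the homogeneous case. For the $J_3$/$J_4$ ordering: under the substitution $z = \sqrt q - c_4 T^{-1/3}\tilde z$ one has $|z|^2 = q - 2\sqrt q \cdot c_4 T^{-1/3} \mathrm{Re}(\tilde z) + O(T^{-2/3})$, so the condition $|z| > |w|$ (i.e.\ $R_1 > R_2$ in $J_3$) translates to $\mathrm{Re}(\tilde z) < \mathrm{Re}(\tilde w)$, namely $D_1 < D_2$, as claimed. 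Once these bounds are in hand, dominated convergence yields the stated formula.
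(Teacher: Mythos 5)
Your approach is the same as the paper's: apply Theorem~\ref{thm:4}, embed the kernels into $L^2(\R)$, perform a steepest-descent analysis with critical point $\sqrt q$ (the paper first changes variables $x\mapsto 1-x$ so its critical point is $w_c=1-\sqrt q$, a purely cosmetic difference), match the $G$-ratios to $\G$ via a cubic Taylor expansion, convert the finite-rank prefactor to $\prod_k(\lambda_k-\tilde\zeta)/(\lambda_k-\tilde z)$, and close the argument with trace-norm control via Gaussian decay estimates on the descent contours together with dominated convergence in the Fredholm series.

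The substantive gap is the sign accounting on the two-fold integrals. Theorem~\ref{thm:4} gives $F_1 = J_1 - J_2 + J_3$. If, as your writing suggests, $J_1\to\J_1$ and $J_2\to\J_2$ once the indicators are rescaled, the limit of $F_1$ would be $\J_1 - \J_2 + \J_{3,<}$, which has the \emph{wrong} signs on the first two terms. What is actually true is $J_1\to -\J_1$ and $J_2\to -\J_2$. The sign arises from contour orientation: the descent contours through the critical point that originate from the $\Gamma_a$-type circles (the $z$-contour in $J_1$, the $w$-contour in $J_2$, and both $z$- and $w$-contours in $J_3,J_4$) emerge locally oriented downward, and reorienting each to the standard upward vertical line costs a factor $-1$. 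Thus $J_1$ and $J_2$ each pick up a single $-1$, while $J_3$ and $J_4$ pick up $(-1)^2 = 1$, which is why the $J_3,J_4$ limits have no sign flip but the $J_1,J_2$ limits do. Your ``hence $F_1 \to \J_2 - \J_1 + \J_{3,<}$'' states the correct answer but does not follow from the reasoning you present and hides this essential orientation computation.

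A smaller imprecision: the conjugation $e^{\mu(v-u)}$ is a free diagonal conjugation that leaves the Fredholm determinant unchanged; it is inserted so that the decay bound $|F_T(u,v)|\leq C e^{-\mu u + \Psi(u)}\, e^{\mu v + \Psi(-v)}$ is integrable. It is not produced by shifting the saddle contours by $\pm\mu$; in particular the contour parameters $d_i, D_i$ are chosen independently of $\mu$, and shifting the $z,w$-contours by $\mu$ to the right would in general violate the constraint $D_1, D_2 < \min_k\lambda_k$ needed to keep the $\lambda_k$-poles outside.
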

We do not prove that the limit defines a probability distribution function in the parameters $\xi_1$ and $\xi_2$, but it is not hard to show
it is the case based on the fact that the corresponding single time distribution is such (see \cite{BBP}).
Remark also that if $F_1$ is thought of in terms of the parameters $t_1, x_1, \xi_1, \D t, \D x$ and $\D \xi$, that is, as
$F_1(u,v \vt t_1, x_1, \xi_1, \D t, \D x, \D \xi)$, then $F_2(u,v) = F_1(-v, -u \vt \D t, \D x, \D \xi, t_1, x_1, \xi_1)$.

\subsubsection{Preparation for the proof}
This section describes how to embed matrices into $L^2(\R)$ for the sake of doing asymptotic analysis.
We will also define contours for saddle point analysis and re-express the $J$ matrices from Theorem \ref{thm:4} for asymptotics.
\smallskip

\paragraph{\textbf{Embedding.}} Embed an $N \times N$ matrix $M$ (where $n$ and $N$ are the parameters from the two-time distribution)
as a kernel over $L^2(\R)$ by the formula
$$M \mapsto F(u,v) = M(n + \lceil u \rceil, n + \lceil v \rceil)$$
where $u, v \in \R$. Set $F(u,v)$ to be zero when $n + \lceil u \rceil$ or $n + \lceil v \rceil$ lie outside the set $[N]$.
According to this embedding, $F$ takes the value $M(i,j)$ over the unit square $(i-n-1, i-n] \times (j-n-1,j-n]$. 
Then it follows readily that
$$\dt{I + M}_{N \times N} = \dt{I + F}_{L^2(\R)},$$
where the latter determinant should be taken as the Fredholm series expansion of $F$.
The KPZ re-scaled kernel is defined to be
\begin{equation} \label{eqn:Fkpz}
F_T(u,v) = \nu_T \cdot F(\nu_T u, \, \nu_T v) \quad \text{with}\; \nu_T = c_0 T^{1/3}.
\end{equation}
Note that $\dt{I+F}_{L^2(\R)}$ equals $\dt{I+F_T}_{L^2(\R)}$, which follows from re-scaling variables in the Fredholm series expansion. The matrices $F_1$ and $F_2$ from Theorem \ref{thm:4} will be considered under the scaling \eqref{eqn:Fkpz}.
\smallskip

\paragraph{\textbf{Descent contours.}}
Consider circular contours $\gamma_0$, around 0, and $\gamma_1$, around 1, as contours for the integration variables $\zeta, z, \omega, w$.
First, define
\begin{equation} \label{eqn:wc}
w_c = 1- \sqrt{q},
\end{equation}
which is the critical point around which asymptotics will be performed. Now, for a (large) parameter $K$, define
\begin{align} \label{eqn:contours}
\gamma_0 &= \gamma_0(\sigma, d) = w_c(1- \frac{d}{K^{1/3}}) e^{\mathbold{i} \sigma K^{-1/3}} \quad |\sigma| \leq \pi K^{1/3}, \\ \nonumber
\gamma_1 &= \gamma_1(\sigma, d) = 1 - \sqrt{q}(1- \frac{d}{K^{1/3}}) e^{\mathbold{i} \sigma K^{-1/3}} \quad |\sigma| \leq \pi K^{1/3}.
\end{align}
The parameter $d$ should satisfy $0 < d < K^{1/3}$. Observe that if $\sigma$ remains bounded independently of $K$ then
one has the expansions
$$ \gamma_0(\sigma, d) = w_c + w_c\frac{(\mathbold{i} \sigma - d)}{K^{1/3}} + O(K^{-2/3}), \quad
\gamma_1(\sigma, d) = w_c + \sqrt{q} \frac{(-\mathbold{i}\sigma + d)}{K^{1/3}} + O(K^{-2/3}).$$
So, locally around $\sigma = 0$, the contours are vertical lines.
\smallskip

\paragraph{\textbf{Re-expressing kernels from Theorem \ref{thm:4}.}}
Consider the $G$-function and the $J$-matrices from Theorem \ref{thm:4}.
Changing variables $x \mapsto 1-x$ for $x = z, w, \zeta, \omega$ in the integrals, and substituting $b_j =1$ and $a_i = q$ for $i > r$,
the formula becomes as follows.

Define
\begin{equation} \label{eqn:Gstar}
G^{*}(z \vt n,m,h) = \frac{z^n (1-z)^{m+h}}{\left (1- \frac{z}{1-q} \right )^{m}} \cdot
\frac{\left (1- \frac{w_c}{1-q} \right )^{m}}{w_c^n (1-w_c)^{m+h}}.
\end{equation}
The asymptotics will involve $G^{*}$, which is normalized around the critical point $w_c$.

Expressing the $J$ matrices in terms of $G^{*}$ gives the following. The factor $w_c^{j-i}$ below corresponds
to a conjugation, and since it appears in front of every $J$-matrix, it can be removed from the determinant as we
will do in the next section. The contours below should not intersect.
\begin{align} \label{eqn:J10}
&  J_{1}(i,j) = \ind{j \leq n} \, \frac{w_c^{j-i-1}}{(2 \pi \mathbold{i})^2}\oint_{|\zeta| = r_1} d \zeta \, \oint_{|z-1| = \rho_1} dz \,
\frac{G^{*}(z \vt j-1, m-r, h)}{G^{*}(\zeta \vt i, m-r, h)\, (z-\zeta)} \prod_{k=1}^r \frac{1-a_k-\zeta}{1-a_k-z} \big (\frac{1-z}{1-\zeta} \big )^r.
\end{align}
\begin{align} \label{eqn:J01}
	&J_{2}(i,j) = \ind{i > n} \, \frac{w_c^{j-i-1}}{(2 \pi \mathbold{i})^2} \oint_{|\omega| = r_1} d\omega \, \oint_{|w-1| = \rho_2} dw \,
	\frac{G^{*}(w \vt N-i, \D m, \D h)}{G^{*}(\omega \vt N+1-j, \D  m, \D h) \, (w-\omega)} 
\end{align}
\begin{align} \label{eqn:J11}
& J_{3}(i,j) = \frac{w_c^{j-i-1}}{ (2 \pi \mathbold{i})^4} \oint_{|\zeta|=r_1} d\zeta \, \oint_{|\omega|=r_2} d \omega \, \oint_{|z-1|=\rho_1} dz\,  \oint_{|w-1| = \rho_2} dw  \\ \nonumber
& \frac{G^{*}(z \vt n,m-r, h) G^{*}(w \vt \D n, \D m, \D h)}{G^{*}(\zeta \vt i,m-r, h) G^{*}(\omega \vt N+1-j, \D m, \D h)\,(z-\zeta)(w-\omega)(z-w)} \prod_{k=1}^r \frac{1-a_k-\zeta}{1-a_k-z} \big (\frac{1-z}{1-\zeta} \big )^r.
\end{align}
The radii $\rho_1 > \rho_2 > \max_k \{a_k\}$ and $r_k < 1-\rho_k$.
The matrix $J_{4}$ is the same as $J_{3}$ except that the contours are ordered to satisfy $\rho_1 < \rho_2$.

\subsubsection{Proof of the theorem} 

In order to derive the limiting formula we need to show that the matrices $F_1$ and $F_2$ from Theorem \ref{thm:4}, under KPZ-scaling
\eqref{eqn:Fkpz} with all the parameters scaled according to \eqref{kpzscaling}, converge to the corresponding matrices $F_1$ and $F_2$
from Theorem \ref{thm:5}. In order to do so, it suffices to show convergence of each of the matrices
$J_{1}, J_{2}, J_{3}$ and $J_{4}$.

The mode of convergence required is one for which the determinant of $I + F(\theta)$ converges to the Fredholm determinant of the limit kernel.
This will happen if we show the following two things.
\begin{enumerate}
	\item Prove that if the kernel variables $u$ and $v$ remain bounded then the KPZ re-scaled kernels of each of the $J$ matrices
	converge to the corresponding $\J$ matrices.
	\item Establish decay estimates of the form $|F_T(u,v)| \leq g_1(u) g_2(v)$ for each of the KPZ re-scaled kernels, where
	$g_1$ and $g_2$ are bounded and integrable functions over $\R$. One can then use the dominated convergence theorem and
	Hadamard's inequality to conclude that the Fredholm determinant of the re-scaled kernels converge to the Fredholm
	determinant of their limit.
\end{enumerate}

We will carry out the procedures above for the matrix $J_{3}$ in order to show that it converges to $\J_{3, <}$.
The steps are entirely alike for the other $J$ matrices; $J_{1}$ converges to $-\J_{1}$, $J_{2}$
converges to $-\J_{2}$ and $J_{4}$ converges to $\J_{3, >}$.
We will omit these for brevity.

In the following it is assumed that $x_1, x_2, \xi_1, \xi_2 \in \R$ are fixed parameters, as well as $0 < t_1 < t_2$ are fixed.
Suppose all these parameters are at most $L$ in absolute value. We will denote by $C_{q,L}$ a constant
that depends only on $q$ and $L$, but whose value may change from place to place.
\smallskip

\paragraph{\textbf{Point-wise limit of $J_{3}$}.}
Under KPZ scaling, the indices $i$ and $j$ are written as $i = n + \lceil \nu_T u \rceil$ and $j = n + \lceil \nu_T v \rceil$
for $u, v \in \R$ and $\nu_T = c_0 T^{1/3}$. The KPZ rescaled kernel for $J_{3}$ is
$$ J_T(u,v) = \nu_T \cdot J_{1,1}(n + \lceil \nu_T u \rceil, n + \lceil \nu_T v \rceil ).$$
Consider $J_{1,1}$ in the form given by \eqref{eqn:J11} and ignore the conjugation factor $w_c^{j-i}$.

First, we choose contours for the variables $\zeta, \omega, z, w$ in the four-fold contour integral \eqref{eqn:J11}.
Recall the contours from \eqref{eqn:contours}.
\begin{align*}
& \zeta = \zeta(\sigma_1) \in \gamma_0 \left ( \frac{c_4}{w_c} \sigma_1, \frac{c_4}{w_c} d_1 \right)
& z = z(\sigma_2) \in \gamma_1\left ( \frac{c_4}{\sqrt{q}} \sigma_1, \frac{c_4}{\sqrt{q}} D_1 \right) 
& \quad K = t_1T, \\
& \omega = \omega(\sigma_3) \in \gamma_0 \left ( \frac{c_4}{w_c} \sigma_3, \frac{c_4}{w_c} d_2 \right)
& w = w(\sigma_4) \in \gamma_1\left ( \frac{c_4}{\sqrt{q}} \sigma_4, \frac{c_4}{\sqrt{q}} D_2 \right)
& \quad K =\D t T.
\end{align*}
We need to have $D_1/t_1^{1/3} < D_2/(\D t)^{1/3} < \min_k \{ \lambda_k \}$ in order to satisfy the constraint $\rho_1 > \rho_2$
and to ensure that the poles at $1-a_k$ lie within the contours.

Due to the choice of contours, Lemma 5.3 of \cite{JR} (which gives decay estimates for $G^{*}$ along these contours)
and some simple bookkeeping leads to the estimate
$$ \nu_T \cdot \left ( \text{integrand of}\; J_{1,1}(n + \lceil \nu_T u \rceil, n + \lceil \nu_T v \rceil) \right )
\leq C_1 e^{- C_2 (\sigma_1^2+\sigma_2^2+\sigma_3^2+\sigma_4^2)},$$
so long as $u$ and $v$ remain bounded and where $C_1$ and $C_2$ are constants that depend on $u,v$ and the parameters $t_i,x_i,\xi_i$. Note the $\sigma_k$s are variables of integration. This allows us to use the dominated convergence theorem to
find the limiting integral for $J_T(u,v)$ by considering its point-wise limit with $u$, $v$ and the $\sigma_k$s held fixed.

If the variables $\sigma_k$ are kept fixed, one has by Taylor expansion that
\begin{align*}
& \zeta(\sigma_1) = w_c + \frac{c_4}{(t_1T)^{1/3}} (\mathbold{i} \sigma_1 - d_1) + C_{q,L}T^{-\frac{2}{3}}
& z(\sigma_2) = w_c + \frac{c_4}{(t_1T)^{1/3}} (\mathbold{i} \sigma_2 + D_1) + C_{q,L}T^{-\frac{2}{3}} \\
& \omega(\sigma_3) = w_c + \frac{c_4}{(\D t T)^{1/3}} (\mathbold{i} \sigma_3 - d_2) + C_{q,L}T^{-\frac{2}{3}}
& w(\sigma_2) = w_c + \frac{c_4}{(\D t T)^{1/3}} (\mathbold{i} \sigma_4 + D_2) + C_{q,L}T^{-\frac{2}{3}}.
\end{align*}
Write
$$\zeta' = (\mathbold{i}\sigma_1 -d_1)/ t_1^{1/3}, z' = (\mathbold{i}\sigma_2 + D_1)/t_1^{1/3},
\omega' = (\mathbold{i}\sigma_3-d_2)/(\D t)^{1/3}, w' = (\mathbold{i} \sigma_4 + D_2)/(\D t)^{1/3}.$$
In these new variables, at $T$ tends to infinity, the contours become
vertical lines. The contours of $\zeta'$ and $\omega'$ become, respectively, the lines $\Re(\zeta') = -d_1/t_1^{1/3}$
and $\Re(\omega') = -d_2/(\D t)^{1/3}$, oriented upwards. The $z$-contour becomes $\Re(z') = D_1/t_1^{1/3}$, oriented downwards.
The $w$-contour becomes downwardly oriented $\Re(w') = D_2/(\D t)^{1/3}$. If these contours are then oriented upwards,
we obtain a factor of $(-1)^2 = 1$.

Set $d'_1 = d_1/t^{1/3}$, $d'_2 = d_2/(\D t)^{1/3}$ and $D'_1 = D_1/t_1^{1/3}$, $D'_2 = D_2/(\D t)^{1/3}$.
The constraints on the limiting contours become $d'_1, d'_2 > 0$ and $0 < D'_1 < D'_2 < \min_k \{ \lambda_k\}$.

Having found the limit contours, we consider the behaviour of the integrand along these contours.
By Lemma 5.2 of \cite{JR} (which stipulates the local behaviour of $G^{*}$ around $w_c$ under KPZ scaling), if
$$n = K - c_1 x K^{2/3} + c_0 u K^{1/3}, \, m = K + c_1 xK^{2/3}, \, h = c_2K + c_3 \xi K^{1/3}$$
and $w = w_c + (c_4/K^{1/3}) w'$, then uniformly for $w'$ in any compact set, 
$$\lim_{K \to \infty} G^{*}(w \vt n,m,h) = \G(w' \vt 1, x, \xi - u) = \exp \{ (w')^3/3 + x (w')^2 - (\xi-u)w'\}.$$
Consequently, as $T \to \infty$, one has that (recall \eqref{eqn:Glimit}):
\begin{align*}
& G^{*}(z \vt n, m-r, h) \to \G(t_1^{1/3}z' \vt 1,x_1, \xi_1) = \G(z' \vt t_1, x_1, \xi_1), \\
& G^{*}(w \vt \D n, \D m, \D h) \to \G((\D t)^{1/3}w' \vt 1, \D x, \D \xi) = \G(w' \vt \D t, \D x, \D \xi),\\
& G^{*}(\zeta \vt i, m-r, h) \to \G(t_1^{1/3}\zeta' \vt 1,x_1, \xi_1 - t_1^{-1/3}u) = \G(\zeta' \vt t_1, x_1, \xi_1)e^{\zeta' u}, \\
& G^{*}(\omega \vt N+1-j, \D m, \D h) \to \G((\D t)^{1/3}\omega' \vt 1, \D x, \D \xi + (\D t)^{-1/3}v) = \G(\omega' \vt \D t, \D x, \D \xi)e^{-\omega' v}.
\end{align*}

Next, it is easy to see from a calculation that
$$\frac{\nu_T}{w_c}\, \frac{d \zeta \, d \omega \, d z \, d w}{(z-\zeta)(w-\omega)(z-w)} =
\frac{d \zeta' \, d \omega' \, d z' \, d w'}{(z'-\zeta')(w'-\omega')(z'-w')} + C_{q,L}T^{-1/3}.$$
Also, $(1-z)/(1-\zeta)$ tends to 1 and so does its $r$-th power.

Finally, consider the product $\prod_{k=1}^r \frac{1-a_k-\zeta}{1-a_k-z}$. Observe that
$$ \frac{1-a_k-\zeta}{1-a_k-z} = \frac{1-a_k - w_c - c_4 \zeta' T^{-1/3}}{1-a_k - w_c - c_4 z' T^{-1/3}} = \frac{\lambda_k - \zeta'}{\lambda_k -z'}.$$
So the product converges to $\prod_{k=1}^r \frac{\lambda_k-\zeta'}{\lambda_k-z'}$.

Putting it together, we find that if $u$ and $v$ remain bounded then $J_T(u,v)$ converges to
\begin{align*}
& \frac{1}{(2 \pi \mathbold{i})^4} \oint \limits_{\Re(\zeta') = -d'_1} d \zeta' \oint \limits_{\Re(\omega') = -d'_2} d \omega'
\oint \limits_{\Re(z') = D'_1} dz'  \oint \limits_{\Re(w') = D'_2} dw' \\
& \frac{\G(z' \vt t_1, x_1, \xi_1) \, \G(w' \vt \D t, \D x, \D \xi)}{\G(\zeta' \vt t_1, x_1, \xi_1) \, \G(\omega' \vt \D t, \D x, \D \xi)} \cdot
\frac{e^{\omega' v - \zeta' u} }{(z'-\zeta')(w'-\omega')(z'-w')} \prod_{k=1}^r \frac{\lambda_k - \zeta'}{\lambda_k - z'}.
\end{align*}
The constraint on the contours is that $d'_1, d'_2 > 0$ and $ 0 < D'_1 < D'_2 < \min_k \{\lambda_k\}$.
This limit is precisely $\J_{1,1, <}$ but without the conjugation factor $e^{\mu (v-u)}$.
\smallskip

\paragraph{\textbf{Decay estimate for $J_{3}$.}}
In order to have the decay estimate on $J_{3}$ for step (2) of the limit argument, one has to include the
conjugation factor $e^{\mu (v-u)}$ for a sufficiently large constant $\mu$ in front of
the KPZ re-scaled kernel $J_T(u,v)$ of $J_{3}$.

First, recall the choice of contours for the variables $\zeta, \omega, z, w$ from the previous step.
By Lemma 5.3 of \cite{JR}, one has the following estimates where $C_1$ and $C_2$ are constants
that depend only on $q$ and $L$ (recall all parameters $t_i, x_i$ and $\xi_i$  are bounded by $L$).
\begin{align*}
& |G^{*}(\zeta(\sigma_1) \vt i, m-r, h)|^{-1} \leq C_1 e^{-C_2 \sigma_1^2 + \Psi(u)}\\
& |G^{*}(\omega(\sigma_3) \vt N+1-j, \D m, \D h)|^{-1} \leq C_1 e^{-C_2 \sigma_3^2 + \Psi(-v)} \\
& |G^{*}(z(\sigma_2) \vt n, m-r, h)| \leq C_1 e^{-C_2 \sigma_2^2} \\
& |G^{*}(w(\sigma_4) \vt \D n, \D m, \D h)| \leq C_1 e^{-C_2 \sigma_4^2}.
\end{align*}
Here $\Psi(x) = - \mu_1 (x)_{-}^{3/2} + \mu_2(x)_{+}$ for some positive constants $\mu_1$ and $\mu_2$.

It is easy to see that there is a constant $C_3$ that depends only on $q$ and the $\lambda_k$s such that
$$ \left | \prod_{k=1}^r \frac{1-a_k-\zeta}{1- a_k-z} \cdot \big (\frac{1-z}{1-\zeta}\big )^r \right | \leq C_3.$$

It follows from these estimates that for the KPZ re-scaled kernel $J_T$,
$$e^{\mu(v-u)} |J_T(u,v)| \leq C_{q,L, \lambda} \, e^{-\mu u + \Psi(u)} \cdot e^{\mu v + \Psi(-v)}.$$
Finally, observe that for $\mu > \max \{\mu_1, \mu_2 \}$, the function $e^{-\mu x + \Psi(x)}$
is bounded and integrable. This shows the decay estimate required for the second step, and completes the argument.

\subsection{Two-time distribution of the KPZ fixed point started from one-sided Brownian motion} \label{sec:8}
This section considers the model from the previous section when $r=1$ and in the limit $\lambda_1 \to 0$.
As mentioned in the Introduction, this leads to the two-time distribution function \eqref{eqn:Hstat}
of a limiting height interface $\mathbold{H}(x,t)$ that starts off from a one-sided Brownian motion.
Specifically, it is the large $T$ joint distributional limit of
$$ \frac{H(c_1 x_1(t_1T)^{2/3},t_1 T) - c_2 (t_1 T)}{c_3 (t_1 T)^{1/3}}, \quad \frac{H(c_1 x_2(t_2 T)^{2/3},t_2 T) - c_2 (t_2 T)}{c_3 (t_2 T)^{1/3}}$$
for the height interface \eqref{eqn:pngheight} of the polynuclear growth model with $a_1 = \sqrt{q}$, $a_i = q$ for $i > 1$ and $b_j = 1$.
Indeed, it is not too hard to see using the formula from Theorem \ref{thm:4} that the two limiting operations $T \to \infty$ and $\lambda_1 \to 0$ commute.

This model has been studied in \cite{BR}, where it is shown that the distribution function of $\mathbold{H}(0,1)$ is $F_{\rm{GOE}}^2$
where $F_{\rm{GOE}}$ is the GOE Tracy-Widom distribution function (see \cite{TW}). The single time multi-spatial distribution function
is then derived in \cite{IS}.

In order to state the result we need to define some kernels over $L^2(\R)$. Define the kernels $\K_{<}$ and $\K_{>}$ by
\begin{align} \label{eqn:Ks}
\K_s(u,v) = & e^{\mu(v-u)} \frac{1}{(2 \pi \mathbold{i})^4} \oint \limits_{\Re(\zeta) = -d_1} d\zeta \oint \limits_{\Re(\omega) = -d_2} d \omega \oint \limits_{\Re(z) = D_1} dz \oint \limits_{\Re(w) = D_2} dw \\ \nonumber
& \frac{\G(z \vt t_1, x_1, \xi_1) \G(w \vt \D t, \D x, \D \xi)}{\G(\zeta \vt t_1, x_1, \xi_1) \G(\omega \vt \D t, \D x, \D \xi)} \cdot \frac{e^{-\zeta u + \omega v} \, \zeta}{(z-\zeta)(w-\omega)(z-w)z}.
\end{align}
The condition of the contours is that $D_1 < D_2$ if $s$ equals $<$ and $D_1 > D_2$ if $s$ is $>$. All the $d$s and $D$s are positive.
The conjugation constant $\mu$ is assumed to be sufficiently large. Define also
\begin{align} \label{eqn:K10}
& \K_{1}(u,v) = e^{\mu(v-u)}  \ind{v < 0}\, \frac{1}{(2 \pi \mathbold{i})^2} \oint \limits_{\Re(\zeta) = -d} d\zeta \oint \limits_{\Re(z) = D} dz\,
\frac{\G(z \vt t_1, x_1, \xi_1)}{\G(\zeta \vt t_1, x_1, \xi_1)} \frac{e^{-\zeta u + zv}\, \zeta}{(z-\zeta) z}.
\end{align}

Define the following functions $a, b,c,d$ for $y \in \R$ and $\mu > 0$.
\begin{align*}
a(y) &= e^{-\mu y} \, \frac{1}{2 \pi \mathbold{i}} \oint \limits_{\Re(\zeta) = -d} d\zeta \, \frac{e^{-\zeta y}}{\G(\zeta \vt t_1, x_1, \xi_1)}\, , \\
b(y) &= e^{\mu y} \, \ind{y < 0} \, , \\
c(y) &= e^{\mu y} \, \frac{1}{(2 \pi \mathbold{i})^2} \oint \limits_{\Re(\omega)=-d} d \omega \oint \limits_{\Re(w)=D} dw\, 
\frac{\G(w \vt \D t, \D x, \D \xi)}{\G(\omega \vt \D t, \D x, \D \xi)} \frac{e^{\omega y}}{(w-\omega)w} \, ,\\
d(y) & = e^{\mu y} \, \frac{1}{2 \pi \mathbold{i}} \oint \limits_{\Re(\omega)=-d} d \omega \, \frac{e^{\omega y}}{\G(\omega \vt \D t, \D x, \D \xi) \omega}\, .
\end{align*}
When $\mu$ is sufficiently large these functions are bounded in absolute value by $(const) \times e^{\mu' y}\Ai(y)$. This implies the following
rank 1 kernels are of trace class.
\begin{equation} \label{eqn:Krankone}
\K_{ab}(u,v) = a(u)b(v) \quad \K_{ac}(u,v) = a(u)c(v) \quad \K_{ad}(u,v) = a(u)d(v).
\end{equation}

\begin{thm} \label{thm:6}
Consider the kernel $F_{\lambda}(\theta)$ from Theorem \ref{thm:5} in the case $r=1$ and $\lambda_1 = \lambda > 0$.
Recall the kernels $\J$ from \eqref{eqn:Js} as well. The following limits hold in the trace norm as $\lambda \to 0$:
$$ \J_{1} \to \K_{1} - \K_{ab}, \quad \J_{3,<} \to \K_{<} + \K_{ac}, \quad \J_{3,>} \to \K_{>} +\K_{ac} - \K_{ad}.$$
The kernel $\J_{2}$ does not depend on $\lambda$ and remains as is. Consequently, as $\lambda \to 0$,
\begin{align*}
F_{1,\lambda} \to \K_1 & = \J_{2} - \K_{1} + \K_{<} + \K_{ab} + \K_{ac} \\
F_{2,\lambda} \to \K_2 &= \K_{1} - \J_{2} - \K_{>} - \K_{ab} - \K_{ac} + \K_{ad}.
\end{align*}
The distribution function \eqref{eqn:Hstat} is then given by
$$ \frac{1}{2 \pi \mathbold{i}} \oint \limits_{|\theta|=r} d\theta\, \frac{\dt{I + \theta^{\ind{u > 0}} \K_1 + \theta^{-\ind{u \leq 0}} \K_2 }_{L^2(\R)}}{\theta - 1}.$$
\end{thm}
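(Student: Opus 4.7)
The plan is to take the $\lambda_1 = \lambda \to 0$ limit of the kernels from Theorem \ref{thm:5} one at a time. All of the $\lambda$-dependence lives in the factor $(\lambda - \zeta)/(\lambda - z)$ and in the contour condition $0 < D_1, D_2 < \lambda$ that pinches the $z$- and $w$-contours toward the imaginary axis; the kernel $\J_2$ contains neither ingredient and is left alone.

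The main device is a contour deformation. For each of $\J_1, \J_{3,<}, \J_{3,>}$, I would deform the $z$-contour from $\Re z = D_1 < \lambda$ to $\Re z = D > 0$ with $D$ held fixed independently of $\lambda$. For $\lambda$ sufficiently small the pole at $z = \lambda$ lies between the two contours, so by the residue theorem each kernel splits as the same integral along the new contour plus a residue contribution at $z = \lambda$. On the deformed contour the integrand converges pointwise to its $\lambda = 0$ analogue, since $(\lambda - \zeta)/(\lambda - z) \to \zeta/z$, which is precisely the factor sitting inside $\K_1, \K_<, \K_>$; hence the deformed integrals for $\J_1, \J_{3,<}, \J_{3,>}$ converge to $\K_1, \K_<, \K_>$ respectively. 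The residue at $z = \lambda$, after sending $\lambda \to 0$, eliminates the $z$-integration and leaves a rank-one contribution: for $\J_1$ the remaining $\zeta$-integral provides $a(u)$ while the indicator $\ind{v \leq 0}$ provides $b(v)$, recovering $\K_{ab}$; for $\J_{3,s}$ the trailing $\omega, w$ double integral provides $c(v)$, recovering $\K_{ac}$.

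The extra $-\K_{ad}$ piece appearing only in the $\J_{3,>}$ limit comes from a further contour swap. Since $\K_>$ requires both $D_1, D_2$ positive with $D_1 > D_2$, matching the original arrangement $D_2 < D_1 < \lambda$ to the target as $\lambda \to 0$ also forces the $w$-contour to be moved, and that deformation crosses the pole at $w = z$. Evaluating that pole at $z = 0$ collapses the $w$-integral and yields a rank-one kernel $a(u) d(v) = \K_{ad}$ with the stated sign. Once the three kernel limits are in hand, the claimed limits of $F_{1,\lambda}, F_{2,\lambda}$ and the formula \eqref{eqn:Hstat} follow immediately, provided the convergence is in trace norm; this is ensured by the Airy-type bounds $|\cdot| \lesssim e^{-\mu' u} \Ai(-u) e^{\mu' v} \Ai(v)$ from the proof of Theorem \ref{thm:5}, which are uniform in $\lambda$ near $0$, so dominated convergence also passes the outer $\theta$-contour integral from Theorem \ref{thm:4} through to the limit.

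The principal obstacle will be the residue bookkeeping for $\J_{3,>}$: one must pick an order of contour deformations so that exactly the poles $z = \lambda$ and $w = z$ are crossed (and not $z = \zeta$ or $w = \omega$), and then track the sign of each residue carefully enough to land on the combination $+\K_{ac} - \K_{ad}$ rather than any of the nearby alternatives.
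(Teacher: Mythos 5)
Your proposal follows the paper's proof: deform the contours past the pinch at $\lambda$, with the deformed integrals converging to $\K_1, \K_<, \K_>$ and the residue contributions converging (as $\lambda\to 0$) to the rank-one kernels $\K_{ab}, \K_{ac}, \K_{ad}$, then pass the Airy-type bounds through the outer $\theta$-integral. The one bookkeeping detail the paper makes explicit, and which you gesture at but phrase loosely, is the order of deformations: for $\J_{3,<}$ the $w$-contour is moved first (encountering no poles since $D_1<D_2$) precisely so that pushing $z$ past $\lambda$ does not also cross $z=w$; and for $\J_{3,>}$ the residue at $z=\lambda$ is taken first, and only then, in that residue term where $1/(z-w)$ has become $1/(\lambda-w)$, is the $w$-contour pushed past $\lambda$, so the second residue sits at $w=\lambda$ (not literally ``$w=z$ evaluated at $z=0$''), whose $\lambda\to 0$ limit $1/(\lambda-\omega)\to -1/\omega$ produces the $-\K_{ad}$ term.
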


\begin{proof}
	It is enough to derive the point-wise limits of the kernels $\J$. The convergence in trace norm holds because the parameter $\lambda$
	does not affect the kernel variables $u$ and $v$, and one has that $\J = \K + \lambda \K'$ for another trace class kernel $\K'$. In
	the following we will derive the point-wise limit of each of the $\J$s separately.
	\smallskip
	
	\paragraph{\textbf{Limit of $\J_{1}$}.}
	In the formula for $\J_{1}(u,v)$ from \eqref{eqn:Js}, push the $z$-contour to the right of $\lambda$. Doing so creates
	a residue at $z = \lambda$, and one finds that $\J_{1}(u,v) = (I) - (II)$ where
	$$ (I) = e^{\mu(v-u)} \ind{v < 0} \, \frac{1}{(2 \pi \mathbold{i})^2} \oint \limits_{\Re(\zeta) = -d} d\zeta \oint \limits_{\Re(z) = D} dz\,
	\frac{\G(z \vt t_1, x_1, \xi_1)}{\G(\zeta \vt t_1, x_1, \xi_1)} \frac{e^{-\zeta u + zv}(\lambda-\zeta)}{(z-\zeta)(\lambda-z)}$$
	with $D > \lambda$, and
	$$ (II) = e^{\mu(v-u)} \ind{v<0}\, \frac{1}{2 \pi \mathbold{i}} \oint \limits_{\Re(\zeta)=-d} d \zeta \,
	\frac{\G(\lambda \vt t_1,x_1, \xi_1)}{\G(\zeta \vt t_1, x_2, \xi_1)} e^{-\zeta u + \lambda v}.$$
	In the limit $\lambda \to 0$, term $(I)$ tends to $\K_{1}(u,v)$ and term $(II)$ tends to $\K_{ab}(u,v)$.
	\smallskip
	
	\paragraph{\textbf{Limit of $\J_{3, <}$}.}
	In the formula for $\J_{3,<}$ from \eqref{eqn:Js}, first move the $w$-contour to the right of $\lambda$ and then
	the $z$-contour. Moving the $w$-contour encounters no poles, but the $z$-contour does at $z = \lambda$.
	The residue there shows that $\J_{3,<}(u,v) = (I) + (II)$ where
	$$ (I) = \text{same as}\; J_{3,<}(u,v)\; \text{but with the condition that}\; \lambda < D_1 < D_2,$$
	and
	\begin{align*}
	(II) = & e^{\mu(v-u)} \, \frac{1}{(2 \pi \mathbold{i})^3} \oint \limits_{\Re(\zeta) = -d_1} d\zeta \oint \limits_{\Re(\omega) = -d_2} d \omega \oint \limits_{\Re(w) = D_2} dw \\
	& \frac{\G(\lambda \vt t_1, x_1, \xi_1) \G(w \vt \D t, \D x, \D \xi)}{\G(\zeta \vt t_1,x_1, \xi_1) \G(\omega \vt \D t, \D x, \D \xi)} \frac{e^{-\zeta u + \omega v}}{(w-\omega)(w-\lambda)}
	\end{align*}
	again with $\lambda < D_2$.
	
	In the limit $\lambda \to 0$, term $(I)$ tends to $\K_{<}(u,v)$ and term $(II)$ tends to $\K_{ac}(u,v)$.
	\smallskip
	
	\paragraph{\textbf{Limit of $\J_{3, >}$}.}
	In the formula for $\J_{3,>}(u,v)$ one has $D_2 < D_1 < \lambda$. So, first move the $z$-contour to the
	right of $\lambda$ so that $D_1 > \lambda $ afterwards. This picks up a residue at $z = \lambda$, and
	shows that $\J_{3,>}(u,v) = (I) + (II)$ where
	$$ (I) = \text{same as}\; \J_{3,>}(u,v)\; \text{from}\; \eqref{eqn:Js}\; \text{but with}\; D_2 < \lambda < D_1$$
	and
	$$(II)  = \text{same as term} (II) \; \text{above but with the condition}\; D_2 < \lambda.$$
	
	In term $(I)$, one can move the $w$-contour to the right of $\lambda$ without encountering any poles.
	Then taking the limit $\lambda \to 0$ shows that this term converges to $\K_{>}(u,v)$.
	
	Now consider term $(II)$. Move the $w$-contour to the right of $\lambda$ with a residue at $w = \lambda$.
	This shows that $(II) = (III) + (IV)$ where
	$$(III) = \text{same as}\; (II)\; \text{but with the condition that}\; \lambda < D_2$$
	and
	$$(IV) = e^{\mu(v-u)} \oint \limits_{\Re(\zeta) = -d_1} d\zeta \oint \limits_{\Re(\omega) = -d_2} d \omega
	\frac{\G(\lambda \vt t_1, x_1, \xi_1) \G(\lambda \vt \D t, \D x, \D \xi)}{\G(\zeta \vt t_1, x_1, \xi_1) \G(\omega \vt \D t, \D x, \D \xi)} \frac{e^{-\zeta u + \omega v}}{\lambda - \omega}.$$
	
	In the limit $\lambda \to 0$, term $(III)$ tends to $\K_{ac}(u,v)$ and term $(IV)$ tends to $-\K_{ad}(u,v)$.
\end{proof}

\section*{Acknowledgements}
We thank a referee for helpful comments, in particular, leading to a better Theorem \ref{thm:4}.

Kurt Johansson's research is partially supported by grant KAW 2015.0270 from the Knut and Alice Wallenberg Foundation
and grant 2015-04872 from the Swedish Science Research Council (VR).


\begin{thebibliography}{plain}
\bibitem{As}
T.~Assiotis.
\newblock \emph{Determinantal structures in space-inhomogeneous dynamics on interlacing arrays}.
Ann.~Henri Poincar\'{e} 21:909-–940, 2020
\newblock \arxiv{1910.09500}

\bibitem{BBP}
J.~Baik, G.~Ben\,Arous and S.~P\'ech\'e.
\newblock \emph{Phase transition of the largest eigenvalue for nonnull complex sample covariance matrices}.
\newblock Ann.~Probab 33(5):1643--1697, 2005.
\newblock \arxivmath{0403022}

\bibitem{BL}
J.~Baik and Z.~Liu.
\newblock \emph{Multi-point distribution of periodic TASEP}.
\newblock J.~Amer.~Math.~Soc. 32:609--674, 2019.
\newblock \arxiv{1710.03284}

\bibitem{BR}
J.~Baik and E.~Rains.
\newblock \emph{Limiting distributions for a polynuclear growth model with external sources}.
\newblock J.~Stat.~Phys. 100:523--541, 2000.
\arxivmath{0003130}.

\bibitem{BDR}
G.~Barraquand, P.~Le Doussal and A.~Rosso.
\newblock \emph{Stochastic growth in time dependent environments}.
\newblock Phys.~Rev.~E 101:040101, 2020.
\newblock \arxiv{1909.11557}

\bibitem{BaGa}
R.~Basu and S.~Ganguly.
\newblock \emph{Time correlation exponents in last passage percolation}.
\newblock preprint, 2018.
\newblock \arxiv{1807.09260}

\bibitem{BFS}
A.~Borodin, P.~L.~Ferrari and T.~Sasamoto.
\newblock \emph{Large time asymptotics of growth models on space-like paths II: PNG and parallel TASEP}.
\newblock Comm.~Math.~Phys. 283:417--449, 2008.
\newblock \arxiv{0707.4207}

\bibitem{BoGo}
A.~Borodin and V.~Gorin.
\newblock \emph{Lectures on integrable probability}.
\newblock In Probability and Statistical Physics in St.~Petersburg,
Proceedings of Symposia in Pure Mathematics, volume 91, pp.~155--214, 2016.
\newblock \arxiv{1212.3351}

\bibitem{BP}
A.~Borodin and S.~P\'ech\'e.
\newblock \emph{Airy kernel with two sets of parameters in directed percolation and random matrix theory}.
\newblock J.~Stat.~Phys 132:275--290, 2008.
\newblock \arxiv{0712.1086}

\bibitem{CoKPZ}
I.~Corwin.
\newblock \emph{The Kardar-Parisi-Zhang equation and universality class}.
\newblock Random Matrices Theory Appl. 1(1):1130001, 2012.
\newblock \arxiv{1106.1596}

\bibitem{CFPa}
I.~Corwin, P.~L.~Ferrari and S.~P\'ech\'e.
\newblock \emph{Limit processes for TASEP with shocks and rarefaction fans}.
\newblock J.~Stat.~Phys. 140(2):232--267, 2010.
\newblock \arxiv{1002.3476}

\bibitem{CFPb}
I.~Corwin, P.~L.~Ferrari and S.~P\'ech\'e.
\newblock \emph{Universality of de-correlation in KPZ growth}.
\newblock Ann.~Inst.~Henri Poincar\'{e} Probab.~Stat. 48:134--150, 2012.
\newblock \arxiv{1001.5345}

\bibitem{DOV}
D.~Dauvergne, J.~Ortmann and B.~Vir\'{a}g.
\newblock \emph{The directed landscape}.
\newblock preprint, 2018.
\newblock \arxiv{1812.00309}

\bibitem{DW}
A.~B.~Dieker and J.~Warren.
\newblock \emph{Determinantal transition kernels for some interacting particles on the line}.
\newblock Ann.~Inst.~Henri Poincar\'{e} Probab.~Stat. 44(6):1162--1172, 2008.
\newblock \arxiv{0707.1843}

\bibitem{NaDo} J.~De Nardis and P.~Le Doussal. 
\newblock \emph{Two-time height distribution for 1D KPZ growth: the recent exact result and its tail via replica}. 
\newblock J. Stat. Mech. 093203, 2018.
\newblock \arxiv{1804.01948}

\bibitem{NaDoTa} J.~De Nardis, P.~Le Doussal and K. A.~Takeuchi.
\newblock \emph{Memory and universality in interface growth}.
\newblock Phys. Rev. Lett. 118: 125701, 2017.
\newblock \arxiv{1611.04756}

\bibitem{Em}
E.~Emrah.
\newblock \emph{Limit shapes for inhomogeneous corner growth models with exponential and geometric weights}.
\newblock Electron.~Commun.~Probab. 21, Article no.~42, 2016.
\newblock \arxiv{1502.06986}

\bibitem{EJS}
E.~Emrah, C.~Janjigian and T.~Seppalainen.
\newblock \emph{Flats, spikes and crevices: the evolving shape of the inhomogeneous corner growth model}.
\newblock \emph{preprint} 2019. \arxiv{1908.09319}.

\bibitem{FO}
P.~L.~Ferrari and A.~Occelli.
\newblock \emph{Time-time covariance for last passage percolation with generic initial profile}.
\newblock Math.~Phys.~Anal.~Geom. 22:1, 2019.
\newblock \arxiv{1807.02982}

\bibitem{GTW}
J.~Gravner, C.A.~Tracy and H.~Widom.
\newblock \emph{A growth model in a random environment}.
\newblock Ann.~Probab 30:1340--1368, 2002.
\newblock \arxivmath{0011150}

\bibitem{IS}
T.~Imamura and T.~Sasamoto.
\newblock \emph{Fluctuations of the one-dimensional polynuclear growth model with external sources}.
\newblock Nucl.~Phys.~B 699:503--544, 2004.
\newblock \arxivmaph{0406001}

\bibitem{JoDPG}
K.~Johansson.
\newblock \emph{Discrete polynuclear growth and determinantal processes}.
\newblock Comm.~Math.~Phys.~242:277--295, 2003.
\newblock \arxivmath{0206208}

\bibitem{Josurvey}
K.~Johansson.
\newblock \emph{Random matrices and determinantal processes}.
\newblock Mathematical Statistical Physics, Session LXXXIII: Lecture Notes of the Les Houches Summer School 2005.
\newblock \arxivmath{0510038}

\bibitem{JoMar}
K.~Johansson.
\newblock \emph{A multi-dimensional Markov chain and the Meixner ensemble}.
\newblock Ark.~Mat. 48:437--476, 2010.
\newblock \arxiv{0707.0098}

\bibitem{JoTwo}
K.~Johansson.
\newblock \emph{The two-time distribution in geometric last-passage percolation}.
\newblock Probab.~Theory Relat.~Fields 175:849--895, 2019.
\newblock \arxiv{1802.00729}

\bibitem{JR}
K.~Johansson and M.~Rahman
\newblock \emph{Multi-time distribution in discrete polynuclear growth}
\newblock preprint, 2019. \arxiv{1906.01053}

\bibitem{KPZ}
M.~Kardar, G.~Parisi and Y.-C.~Zhang.
\newblock \emph{Dynamic scaling of growing interfaces}.
\newblock Phys.~Rev.~Letts. 56:889--892, 1986.

\bibitem{KPS}
A.~Knizel, L.~Petrov and A.~Saenz.
\newblock \emph{Generalizations of TASEP in discrete and continuous inhomogeneous space}.
\newblock Comm.~Math.~Phys.~372:797--864, 2019.
\newblock \arxiv{1808.09855}

\bibitem{KS}
J.~Krug and H.~Spohn.
\newblock \emph{Kinetic Roughening of Growing Interfaces}.
\newblock In Solids far from Equilibrium: Growth, Morphology and Defects, ed. by C.~Godr\`{e}che.
\newblock Cambridge University Press, pp. 479--582, 1992.

\bibitem{Liu}
Z.~Liu
\newblock \emph{Multi-time distribution of TASEP}.
\newblock preprint, 2019.
\newblock \arxiv{1907.09876}.

\bibitem{MQR}
K.~Matetski, J.~Quastel and D.~Remenik.
\newblock \emph{The {KPZ} fixed point}.
\newblock preprint, 2017.
\newblock \arxiv{1701.00018}

\bibitem{Ok}
A.~Okounkov.
\newblock \emph{Infinite wedge and random partitions}.
\newblock Selecta Math. 7, Article no.~57, 2001.
\newblock \arxivmath{9907127}

\bibitem{PS}
M.~Pr\"{a}hofer and H.~Spohn.
\newblock \emph{Scale invariance of the PNG droplet and the Airy process}.
\newblock J.~Stat.~Phys. 108:1071--1106, 2002.
\newblock \arxivmath{0105240}

\bibitem{QuKPZ}
J.~Quastel.
\newblock \emph{Introduction to KPZ}.
\newblock In Current Developments in Mathematics. International Press of Boston, Inc., 2011.

\bibitem{RS}
A.~R\'{a}kos and G.~M.~Sch\"{u}tz.
\newblock \emph{Bethe ansatz and current distribution for the TASEP with particle-dependent hopping rates}.
\newblock Markov Process.~Related Fields 12: 323--334, 2006.
\newblock \arxivcnma{0506525}

\bibitem{TW}
C.A.~Tracy and H.~Widom.
\newblock \emph{On orthogonal and symplectic matrix ensembles}.
\newblock Comm.~Math.~Phys. 177(3):727--754, 1996.

\bibitem{Wa}
J.~Warren.
\newblock \emph{Dyson's Brownian motions, intertwining and interlacing}.
\newblock Electron.~J.~Probab. 12(19):573--590, 2007.
\newblock \arxivmath{0509720}

\end{thebibliography}
\end{document}